\documentclass[a4paper, 
]{article}

\usepackage{amsmath, amssymb, amsthm}
\usepackage{lineno,hyperref}
\usepackage{enumerate}

\usepackage{lineno,hyperref}
\usepackage{comment}
\usepackage{mathrsfs}

\usepackage{mathdots}
\usepackage{enumerate}

\usepackage[all]{xy}

\usepackage{tikz}

\usepackage{amsthm}

\newtheorem{thm}{Theorem}[section]

\newtheorem{cor}[thm]{Corollary}

\newtheorem{prop}[thm]{Proposition}
\newtheorem{lem}{Lemma}[section]
\newtheorem{rem}{Remark}[section]
\newtheorem{Def}{Definition}[section]
\newtheorem{ex}{Example}
\newtheorem*{claim*}{Claim}
\newtheorem*{Thm1*}{Theorem A}
\newtheorem*{Thm2*}{Theorem B}

\allowdisplaybreaks

\begin{document}
	
		\title{On tt*-structures from \(ADE\)-type Stokes data}
		\author{Tadashi Udagawa}
	    \date{}
	    \maketitle
		
		\begin{abstract}
		Cecotti and Vafa introduced the topological anti-topological fusion (tt*)-equation, whose solutions describe massive deformations of supersymmetric conformal field theories. We provide a rigorous analytic formulation of the \(ADE\) classification of tt*-structures. Under natural structural assumptions, a tt*-structure over \(\mathbb{C}^*\) can be described via isomonodromic deformations with upper unitriangular real Stokes matrices. Two fundamental issues arise: the ambiguities of Stokes matrices, governed by an action of a group \(\tilde{Br}_n\), which is generated by reordering operations, and the solvability of the associated Riemann-Hilbert problem. Our first main result shows that the classification reduces to admissible Stokes matrices modulo \(\tilde{Br}_n\)-action, and that the \(\tilde{Br}_n\)-orbit of a Stokes matrix determines a tt*-structure over \(\mathbb{C}^*\). Our second main result establishes that upper unitriangular matrices whose symmetrizations coincide with Cartan matrices of type \(A_n, D_n, E_6, E_7,\) or \(E_8\) give rise to  tt*-structures over \(\mathbb{C}^*\). This provides a direct analytic realization of the \(ADE\) classification and clarifies the interplay between Stokes phenomena, \(\tilde{Br}_n\)-symmetry, and positivity of Cartan-type matrices.
		\end{abstract}
		\vspace{10pt}

    \flushleft{{\it Keywords:} \(ADE\) classification, tt*-structure, tt*-equation, Riemann-Hilbert problem}

	
\section{Introduction}
The topological anti-topological fusion (tt*)-structure was introduced by S. Cecotti and C. Vafa in the literature of \(N=2\) supersymmetric field theory \cite{CV1991}, where the \(ADE\) classification was predicted on physical grounds \cite{CV1993}. From a mathematical perspective, a tt*-structure can be described as a flat bundle  over a complex manifold equipped with two metrics and its flatness condition is called the tt*-equation. The tt*-equation is highly nonlinear and has been solved explicitly only in very special cases such as the sinh-Gordon equation \cite{DGR2010}, \cite{MTW1977}, \cite{U2024}, the tt*-Toda equation \cite{GIL20151},\cite{GIL20152},\cite{GIL2020}, \cite{U20252} and the tt*-equation constructed from the \(({\rm SU}_2)_k\)-fusion ring \cite{U20251}.\vskip\baselineskip

In \cite{D1993}, B. Dubrovin formulated tt*-structures over a Frobenius manifold as an isomonodromic deformation of the corresponding tt*-equations. Within this framework, Cecotti and Vafa established the \(ADE\) classification of tt*-structures \cite{CV1993}. Although special cases such as the tt*-Toda equation are well understood \cite{GIL20151}, a direct analytic proof of the \(ADE\) classification at the level of tt*-equations has remained incomplete.\vskip\baselineskip

In this paper, we provide a rigorous analytic formulation of the \(ADE\) classification under natural structural assumptions and construct tt*-structures from \(ADE\)-type Stokes matrices (see Section 4.2). We consider the following assumptions on tt*-structures \((E,\eta,g,\Phi)\) over \(\mathbb{C}^*\):

\begin{enumerate}
	\setlength{\itemindent}{7mm}
	\item [(DB)] the Higgs field \(\Phi\) has distinct constant eigenvalues.
	
	\item [(R)] with respect to eigenvectors of \(\Phi\), the Hermitian metric \(g\) is radial on \(\mathbb{C}^*\).
\end{enumerate}

Under these assumptions, a tt*-structure over \(\mathbb{C}^*\) can be described as an isomonodromic deformation whose Stokes matrices are upper unitriangular real matrices in \({\rm SL}_n \mathbb{R}\). Conversely, given \(\Phi\) and an upper unitriangular matrix, the construction of a tt*-structure reduces to solving an associated Riemann-Hilbert problem. However, two fundamental issues arise in this setting: 
the ambiguities of Stokes matrices and the existence of solutions to the 
Riemann-Hilbert problem.\vskip\baselineskip

The ambiguities of Stokes matrices for Frobenius manifolds were analyzed by G. Cotti, B. Dubrovin and D. Guzzetti \cite{CDG2020}. In our setting, they arise from the choice of the order of eigenvectors of \(\Phi\) and coordinate on \(\mathbb{C}^*\), which change the configuration of Stokes rays. For a given tt*-structure \((E,\eta,g,\Phi)\), there are finitely many associated Stokes matrices \(\mathscr{S}(E,\eta,g,\Phi)\) and we observe that they are governed by an action of a \(\tilde{Br}_n\), which is generated by reordering operations (Section 3.1, 3.2). Motivated by this observation, we introduce a notion of Stokes data as the \(\tilde{Br}_n\)-orbit of a Stokes matrix and define a natural equivalence relation on tt*-structures over \(\mathbb{C}^*\) satisfying (DB) and (R). We show that, under these conditions, a tt*-structure is uniquely determined by its equivalence class (Section 3.3). The equivalence relation is motivated by the classification of Cecotti, Vafa \cite{CV1993} and we provide a precise mathematical formulation of the \(ADE\) classification.\vskip\baselineskip

Our first main result shows that the classification problem can be reduced to the study of admissible Stokes matrices modulo actions of \(\tilde{Br}_n\) and the solvability of their associated Riemann-Hilbert problem (RH) (Section 4.1).
\begin{Thm1*}\label{prop4.2}
	Let \(S \in {\rm SL}_n \mathbb{R}\) be an upper unitriangular matrix. Suppose that there exists \(\sigma \in \tilde{Br}_n\) such that \(\sigma(S)\) provides a solution to the Riemann-Hilbert problem (RH) for all pairwise distinct \(u_1,\dots,u_n \in \mathbb{C}\). Then \(S\) gives a tt*-structure which is equivalent to the tt*-structure given by \(\sigma(S)\).
\end{Thm1*}

Thus, every upper unitriangular matrix in the \(\tilde{Br}_n\)-orbit of a Stokes matrix gives a tt*-structure over \(\mathbb{C}^*\).\vskip\baselineskip

Our second main result establishes the existence statement in the \(ADE\) classification (Section 4.2).
\begin{Thm2*}\label{thm1.1}
	Let \(S \in {\rm SL}_n \mathbb{R}\) be an upper unitriangular matrix. If there exists \(\sigma \in \tilde{Br}_n\) such that 
	\(\sigma(S) + \sigma(S)^t\) coincides with one of the Cartan matrices 
	of type \(A_n, D_n, E_6, E_7,\) or \(E_8\),
	then \(S\) gives a tt*-structure over \(\mathbb{C}^*\).
\end{Thm2*}

The proof proceeds by formulating the problem as the Riemann-Hilbert problem (RH) and applying the Vanishing Lemma (Lemma 4.2 of Section 4, Corollary 3.2 of \cite{FIKN2006}) together with structural properties of the Cartan matrices of type \(A_n, D_n, E_6, E_7,\) and \(E_8\). The Vanishing Lemma gives a criterion for solvability of the Riemann-Hilbert problem, and in our case the problem reduces to verifying the positivity of Cartan-type matrices. As a consequence, we obtain a direct analytic realization of the \(ADE\) classification under assumptions (DB) and (R).\vskip\baselineskip

In the framework of TERP structures, the \(ADE\) classification was previously established by C, Sabbah \cite{S2005}, C. Hertling, C. Sevenheck \cite{HS2007} and Hertling, Sabbah \cite{HS2011} 
using results of \(ADE\)-singularity theory. In contrast, our approach does not rely on singularity theory: instead, we solve the tt*-equation directly by complex-analytic methods, extending techniques developed for the tt*-Toda equation \cite{GIL20152}. This construction yields families of explicit solutions to tt*-equations and clarifies the geometric mechanism behind the \(ADE\)-types, namely the interplay between Stokes phenomena, \(\tilde{Br}\)-symmetry, and the positivity of Cartan-type matrices.\vskip\baselineskip

This paper is organized as follows. In Section 2, we recall the definition of tt*-structure over \(\mathbb{C}^*\) and their isomonodromic deformations. In Section 3, we analyze the ambiguity of Stokes matrices and introduce the notion of Stokes data. In Section 4, we define a group \(\tilde{Br}_n\) and show that the \(\tilde{Br}_n\)-orbit of a Stokes matrix gives a tt*-structure over \(\mathbb{C}^*\) (Theorem A). Finally, we construct tt*-structures from \(ADE\)-type Stokes matrices using the Vanishing Lemma (Theorem B).

\section{Preliminaries}
\subsection{tt*-structures}
A tt*-structure is a special class of harmonic bundles introduced by Cecotti and Vafa in physics \cite{CV1991}. We recall the definition of a tt*-structure over \(\mathbb{C}^*\).

\begin{Def}[C. Hertling \cite{H2003}, H. Fan, T. Yang, Z. Lan \cite{FLY2021}]\label{def2.1}
	A tt*-structure \((E,\eta,g,\Phi) \) over \(\mathbb{C}^*\) consists of a holomorphic vector bundle over \(\mathbb{C}^*\) equipped with a holomorphic structure \(\overline{\partial}_E\), a holomorphic nondegenerate symmetric bilinear form \(\eta\), a Hermitian metric \(g \) and a holomorphic \({\rm End}(E) \)-valued 1-form \(\Phi\) such that
	\begin{enumerate}
		\item [(a)] \(\Phi \) is self-adjoint with respect to \(\eta \), \vspace{1mm}
		
		\item [(b)] a complex conjugate-linear involution \(\kappa \) on \(E \) is given by \(g(a,b) = \eta(\kappa(a),b) \) for \(a,b \in \Gamma(E) \), i.e. \(\kappa^2 = Id_E \) and \(\kappa(\mu a) = \overline{\mu} a \) for \(\mu \in \mathbb{C},\ a \in \Gamma(E) \), \vspace{1mm}
		
		\item [(c)] a flat connection \(\nabla^{\lambda} \) is given by
		\begin{equation}
			\nabla^{\lambda} = D + \lambda^{-1} \Phi + \lambda \Phi^{\dagger_g },\ \ \ \lambda \in S^1, \nonumber
		\end{equation}
		where \(D = \partial_E^g + \overline{\partial}_E \) is the Chern connection and \(\Phi^{\dagger_g} \) is the adjoint operator of \(\Phi \) with respect to \(g \).
	\end{enumerate}
	
	Given a tt*-structure \((E,\eta,g,\Phi)\), the flatness condition
	\begin{equation}
		\left[\partial_E^g,\overline{\partial}_E \right] = -\left[\Phi,\Phi^{\dagger_g } \right] = -\left(\Phi \wedge \Phi^{\dagger_g} + \Phi^{\dagger_g} \wedge \Phi\right), \nonumber
	\end{equation}
	is called the tt*-equation.
\end{Def}\vskip\baselineskip

In general, the tt*-equation is difficult to solve directly, but Cecotti and Vafa introduced some examples of the explicitly solvable tt*-equations \cite{CV1991}, \cite{CV19932}, \cite{CV1993}. For example, the tt*-Toda equation is solved by Guest, Its and Lin from the view point of p.d.e. theory \cite{GIL20151} and isomonodromy theory \cite{GIL20152}, \cite{GIL2020}.
\begin{ex}[The Tzitzeica equation]\label{ex1}
	Let \(E = \mathbb{C}^* \times \mathbb{C}^3\) be a trivial vector bundle with standard frame \(\{e_1,e_2,e_3\}\). Define
	\begin{align}
		&\eta(e_j,e_l) = \delta_{j,3-l},\ \ \ \Phi(e_1,e_2,e_3) = (e_1,e_2,e_3) \left(\begin{array}{ccc}
			0 & 0 & 1 \\
			1 & 0 & 0 \\
			0 & 1 & 0
		\end{array}\right)dt,\ \ \ t \in \mathbb{C}^*, \nonumber\\
		&g(e_1,e_1) = e^w,\ \ \ g(e_3,e_3) = e^{-w},\ \ \  g(e_j,e_l) = 0\ ((i,j) \neq (1,1), (3,3)), \nonumber
	\end{align}
	where \(w: \mathbb{C}^* \to \mathbb{R}\) is a solution to the Tzitz\'eica equation \(w_{t\overline{t}} = e^{2w} - e^{-w}\). Then \((E,\eta,g,\Phi)\) is a tt*-structure over \(\mathbb{C}^*\). \qed
\end{ex}\vskip\baselineskip

In this paper we focus on the isomonodromy aspects of tt*-structures and construct new families of explicitly solvable tt*-equations.

\subsection{Isomonodromy theory of tt*-equations}
We review an isomonodromic deformation of the tt*-equation following Dubrovin \cite{D1993} and Guest, Its and Lin \cite{GIL20152}. The tt*-equation can be reformulated as a meromorphic linear ordinary differential equation on \(\mathbb{C}\) with irregular singularities of Poincaré rank one at the origin and infinity.\vskip\baselineskip

Let \(\Sigma = \mathbb{C}^* \) with coordinate \(t \in \mathbb{C}^*\) and \((E,\eta,g,\Phi)\) a tt*-structure of rank \(n\) with the eigenvalues \(\mu_1,\dots,\mu_n \in \Omega_{\Sigma}^{1,0}\) of \(\Phi\). Assume a condition
\begin{equation}\label{DB}
	\mu_j = u_j dt,\ \ u_j \in \mathbb{C}:{\rm constant},\ \ \ \ \ u_j \neq u_l\ \ (j \neq l). \tag{DB}
\end{equation}
Under this assumption, one obtains a distinguished holomorphic frame \(\tau\) of \(E\).
\begin{lem}\label{prop2.13}
	There exists a frame \(\tau = (\tau_1,\dots,\tau_n)\) of \(E \) such that
	\begin{itemize}
		\item [(1)] \(\eta(\tau_i,\tau_j) = \delta_{ij}\),
		\item [(2)] \(\Phi(\tau_j) = \tau_j \cdot \mu_j\).
	\end{itemize}
	The frame \(\tau\) is uniquely determined up to right multiplication by \({\rm diag}(\varepsilon_1,\cdots,\varepsilon_n)\), where \(\varepsilon_j^2 = 1\ (j=1,\dots,n)\).
\end{lem}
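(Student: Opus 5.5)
The plan is to diagonalize \(\Phi\) fiberwise using eigen-line bundles, and then normalize the eigenvectors with \(\eta\). Write \(\Phi = \phi\, dt\) with \(\phi \in \Gamma({\rm End}(E))\) holomorphic. By (DB), \(\phi\) has the \(n\) distinct constant eigenvalues \(u_1,\dots,u_n\) at every point. Consider the spectral projections
\[
P_j = \prod_{l\neq j} \frac{\phi - u_l}{u_j - u_l},
\]
which are holomorphic endomorphisms of \(E\) since the \(u_l\) are constants. Each \(L_j = {\rm Im}\, P_j\) is then a holomorphic line subbundle, and \(E = L_1 \oplus \cdots \oplus L_n\).

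\emph{Orthogonality.} Because \(\Phi\) is \(\eta\)-self-adjoint (condition (a)), for \(a\in L_j\) and \(b \in L_l\) we have
\[
u_j\,\eta(a,b) = \eta(\phi a,b) = \eta(a,\phi b) = u_l\,\eta(a,b).
\]
Hence \(\eta(L_j,L_l)=0\) for \(j\neq l\). Nondegeneracy of \(\eta\) then forces \(\eta|_{L_j}\) to be nondegenerate on each line.

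\emph{Normalization.} Every holomorphic line bundle over the noncompact Riemann surface \(\mathbb{C}^*\) is holomorphically trivial, so each \(L_j\) has a nowhere-vanishing holomorphic section \(s_j\). The function \(f_j=\eta(s_j,s_j)\) is holomorphic and nowhere zero on \(\mathbb{C}^*\). This is the main obstacle: we need a holomorphic square root of \(f_j\), which exists on \(\mathbb{C}^*\) only if the winding number of \(f_j\) around the origin is even. Replacing \(s_j\) by \(t^k s_j\) multiplies \(f_j\) by \(t^{2k}\), so the winding parity cannot be changed this way. I would resolve this using the remaining data of the tt*-structure, namely the real structure \(\kappa\) and the flat connections \(\nabla^\lambda\) whose monodromy preserves \(\eta\). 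Alternatively, as the paper presumably does implicitly, one can note that the frame is only needed locally, or on a simply connected cut domain, for the isomonodromy analysis. In that setting \(\sqrt{f_j}\) exists, and \(\tau_j = s_j/\sqrt{f_j}\) gives \(\eta(\tau_j,\tau_j)=1\) and \(\Phi(\tau_j)=\tau_j\mu_j\).

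\emph{Uniqueness.} Suppose \(\tau'\) is another frame with properties (1) and (2). Since the eigenvalues are distinct, property (2) forces \(\tau'_j = h_j\tau_j\) for a holomorphic function \(h_j\). Property (1) then gives \(h_j^2=1\). On the connected set \(\mathbb{C}^*\), continuity makes \(h_j\) a constant \(\varepsilon_j=\pm1\). This proves uniqueness up to right multiplication by \({\rm diag}(\varepsilon_1,\dots,\varepsilon_n)\).
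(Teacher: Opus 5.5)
\textbf{Where your proposal and the paper agree.} Your route is the paper's. You split \(E\) into the eigen-line bundles \(L_j=\ker(\Phi_{\partial_t}-u_j)\), get \(\eta(L_j,L_l)=0\) for \(j\neq l\) from \(\eta\)-self-adjointness, normalize, and read off uniqueness up to signs. The splitting, orthogonality and uniqueness steps are fine; your projectors \(P_j\) even give holomorphicity of \(L_j\) directly, where the paper cites a smooth constant-rank theorem.

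\textbf{The gap.} The normalization step is not done. You flag it but do not close it, and the paper's proof does not close it either: it just asserts that one ``can choose a holomorphic frame \(\tau\)'' with (1), (2). Your first proposed repair, via \(\kappa\) and \(\nabla^\lambda\), cannot work, because the tt*-axioms do not kill the parity you found. Take \(E=\mathbb{C}^*\times\mathbb{C}\) with frame \(e\), \(\eta(e,e)=t\), \(g(e,e)=|t|\), \(\Phi(e)=e\cdot u\,dt\). Then:
\begin{itemize}
\item \(\kappa(e)=(\overline{t}/|t|)\,e\) is a conjugate-linear involution;
\item the Chern connection is \(De=e\cdot\tfrac12 t^{-1}dt\), with curvature \(\overline{\partial}\partial\log|t|=0\);
\item \([\Phi,\Phi^{\dagger_g}]=0\).
\end{itemize}
So \(\nabla^\lambda\) is flat, and one even has \(D\eta=0\) and \(D\kappa=0\). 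Yet \(\tau_1=c\,e\) with \(\eta(\tau_1,\tau_1)=1\) forces \(c^2t=1\), which has no continuous solution on \(\mathbb{C}^*\). Adding a summand \((\mathbb{C}^*\times\mathbb{C},\,1,\,1,\,u_2\,dt)\) with \(u_2\neq u\) gives a rank-two example satisfying (DB). Since locally \(G=I\), it even satisfies (R). The parity of the winding number of \(\eta(s_j,s_j)\) is exactly the monodromy \(\pm1\) of the orthogonal line bundle \((L_j,\eta|_{L_j})\). The connection \(\nabla^\lambda\) merely reproduces it; its monodromy here is \(-1\).

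\textbf{What can actually be proved.} Read as the existence of a frame of \(E\) over all of \(\mathbb{C}^*\), the lemma is false without an extra hypothesis, so no argument can supply your missing step. What is true is what your second option gives. On any simply connected subdomain (e.g.\ \(\mathbb{C}^*\) minus a ray), \(\tau_j=s_j/\sqrt{f_j}\) works, and your uniqueness argument applies verbatim on any connected domain. A global frame exists if and only if every \(f_j\) has even winding number about \(0\). In general, continuing \(\tau\) around \(0\) returns \(\tau\cdot{\rm diag}(\varepsilon_1,\dots,\varepsilon_n)\), which is precisely the sign ambiguity \(\sigma_\varepsilon\) of Section 3.1. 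Under (R), comparing the two boundary values of \(G\) along the cut shows that \(G\) commutes with this \(\varepsilon\), so \(G\) and the linear system for \(\Psi\) remain single-valued. You should state and prove the lemma in this local form, or add the parity hypothesis, rather than rest the global claim on an unspecified use of \(\kappa\) and \(\nabla^\lambda\).
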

\begin{proof}
	From (\ref{DB}) and Theorem 10.34 of \cite{L2013}, \(L_j = {\rm Ker}\left(\Phi_{\partial_t}-u_j \cdot Id_E\right)\) is a holomorphic subbundle of \(E \) and we split \(E = L_1 \oplus \cdots \oplus L_n\). Since \(u_i \neq u_j\) and \(\Phi\) is self-adjoint with respect to \(\eta\), we can choose a holomorphic frame \(\tau\) of \(E\) such that \(\tau\) satisfies (1), (2). Let \(\tilde{\tau} = (\tilde{\tau}_1,\dots,\tilde{\tau}_n)\) be a frame satisfying (1), (2) and put \(\tilde{\tau} = \tau A\). From the conditions (\ref{DB}) and (1), \(A\) is a diagonal matrix. Since \(\eta(\tau_i,\tau_j) = \eta(\tilde{\tau}_i,\tilde{\tau}_j) = \delta_{ij}\), we obtain \(A = {\rm diag}(\varepsilon_1,\cdots,\varepsilon_n)\ (\varepsilon_j^2 = 1)\).
\end{proof}

Thus, \(\tau_j\) is an eigenvector of \(\Phi\) and \(\tau_j,\tau_l\) are orthogonal to each other with respect to \(\eta\). Let
\begin{equation}
	G = \left(G_{ij}\right) = \left(g(\tau_i,\tau_j)\right), \nonumber
\end{equation}
then \(G\) is an orthogonal matrix-valued function on \(\mathbb{C}^*\).
\begin{lem}
	We have
	\begin{equation}
		\overline{G} = G^t = G^{-1}. \nonumber
	\end{equation}
\end{lem}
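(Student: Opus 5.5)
Since \(\tau\) is \(\eta\)-orthonormal, I will write the three properties in terms of the involution \(\kappa\) from Definition \ref{def2.1}(b). Expand \(\kappa(\tau_i) = \sum_k \tau_k M_{ki}\) in the frame \(\tau\), where \(M\) is a matrix-valued function. Then
\[
G_{ij} = g(\tau_i,\tau_j) = \eta(\kappa(\tau_i),\tau_j) = \sum_k M_{ki}\,\eta(\tau_k,\tau_j) = M_{ji},
\]
so \(G = M^t\). The plan is to derive all three identities from three inputs: the Hermitian property of \(g\), the relation \(\kappa^2 = Id_E\), and the complex-conjugate linearity of \(\kappa\).

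\emph{Step 1 (\(\overline{G} = G^t\)).} Since \(g\) is Hermitian, \(g(\tau_j,\tau_i) = \overline{g(\tau_i,\tau_j)}\). This gives \(G^t = \overline{G}\) directly, with the convention that \(g\) is conjugate-linear in the first slot, consistent with \(g(a,b)=\eta(\kappa(a),b)\).

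\emph{Step 2 (\(G^t = G^{-1}\), i.e.\ \(G\overline{G} = I\)).} Apply \(\kappa\) twice:
\[
\tau_i = \kappa^2(\tau_i) = \kappa\Big(\sum_k \tau_k M_{ki}\Big) = \sum_k \kappa(\tau_k)\,\overline{M_{ki}} = \sum_{l,k} \tau_l M_{lk}\overline{M_{ki}}.
\]
Hence \(M\overline{M} = I\). Transposing gives \(\overline{M}^t M^t = I\), i.e.\ \(\overline{G}G = I\). Combined with Step 1, \(G^tG = I\), so \(G^t = G^{-1}\). Together with \(\overline{G} = G^t\), this gives \(\overline{G} = G^t = G^{-1}\); in particular \(G\) is orthogonal, as claimed just before the lemma.

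\emph{Main obstacle.} The only delicate point is bookkeeping: keeping track of which slot of \(g\) is conjugate-linear, and of the transpose and index conventions in the expansion of \(\kappa(\tau_i)\). A mismatch there would produce \(G = \overline{G}^{-1}\) in a transposed form. Because the final identities are symmetric under transposition and conjugation, any consistent convention yields the same result.
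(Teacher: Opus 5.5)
Your proof is correct and takes the same route as the paper: Hermitian symmetry of \(g\) gives \(\overline{G}=G^t\), and expressing \(\kappa\) in the \(\eta\)-orthonormal frame \(\tau\), then using \(\kappa^2=Id_E\) together with conjugate-linearity, gives \(\overline{G}G=I_n\) (equivalently \(G\overline{G}=I_n\)). Your index bookkeeping is slightly more careful than the paper's: the paper writes \(\kappa(\tau)=\tau\cdot G\), whereas your computation gives \(\kappa(\tau)=\tau\cdot G^t\), and as you note this transpose does not affect the conclusion.
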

\begin{proof}
	Since \(g\) is a Hermitian metric, \(G\) satisfies \(\overline{G}^t = G\). From Lemma \ref{prop2.13} and the definition of \(\kappa\), we have \(\kappa(\tau) = \tau \cdot G\). Since \(\kappa\) is a complex conjugate-linear involution, \(G\) satisfies \(G\overline{G} = I_n\) and then we obtain the stated result.
\end{proof}
In what follows we impose the additional radial condition
\begin{equation}\label{R}
	G = G(t,\overline{t}) = G(|t|). \tag{R}
\end{equation}

With respect to the frame \(\tau\), the flat connection \(\nabla^{\lambda}\) takes the form \(\nabla^{\lambda}\tau = \tau \alpha\), where
\begin{equation}
	\alpha = G^{-1} (\partial G) + \lambda^{-1} Adt + \lambda G^{-1} \overline{A}^t G d\overline{t},\ \ \ \lambda \in \mathbb{C}^*, \nonumber
\end{equation}
and
\begin{equation}
	A = \left(\begin{array}{ccc}
		u_1 & & \\
		& \ddots & \\
		& & u_n
	\end{array}\right). \nonumber
\end{equation}
The flatness condition yields the radial tt*-equation
\begin{equation}
	\left(xG^{-1}G_x \right)_x = 4x\left[A,G^{-1} \overline{A} G \right],\ \ \ \ \ x = |t|. \nonumber
\end{equation}
From Theorem 4.1 of \cite{FIKN2006}, \(G, A\) have the following property.
\begin{prop}
	The monodromy data of
	\begin{equation}
		\Psi_{\mu} = \left(-\mu^{-2} xA + \mu^{-1} \frac{x}{2} G^{-1} G_x + x G^{-1} \overline{A} G \right) \Psi,\ \ \ \mu \in \mathbb{C}, \label{A}
	\end{equation}
	is independent of \(x\) if and only if \(G,A\) satisfy the radial tt*-equation.
\end{prop}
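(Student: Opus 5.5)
The plan is to derive the equivalence from the Jimbo--Miwa--Ueno / Flaschka--Newell theory of isomonodromic deformations. In that theory, the monodromy data of a system with irregular singularities is preserved along a deformation parameter exactly when the system is the compatibility condition of a Lax pair. So I will build a second linear system in \(x\) and show that flatness of the pair \((\Psi_\mu,\Psi_x)\) is the radial tt*-equation.

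First, I will reduce the flat connection \(\nabla^{\lambda}\) to the radial variable. Using (R), write \(t = xe^{i\theta}\). I then perform a gauge and spectral change \(\lambda \mapsto \mu\) that absorbs the phase \(e^{i\theta}\); concretely, set \(\lambda = \mu e^{-i\theta}\) up to a constant factor. After this substitution the \(\theta\)-dependence disappears from \(\alpha\).

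The connection then produces an \(x\)-equation of the form
\[
\Psi_x = \left(-\mu^{-1}A + \tfrac12 G^{-1}G_x + \mu G^{-1}\overline{A}G\right)\Psi .
\]
The operator \(x\partial_x\) mixes the \(\lambda\)- and \(\theta\)-directions, and combining it with the generator of rotations in \(\mu\) yields precisely the \(\mu\)-equation (\ref{A}). This is the standard homogeneity trick of \cite{D1993} and \cite{GIL20152}: \(\mu\partial_\mu\) and \(x\partial_x\) act compatibly because the coefficients are homogeneous under \((\mu,x)\mapsto(c\mu,cx)\), up to the \(\mu^{-2}x\) term.

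Second, I will compute the zero-curvature condition
\[
\partial_x U - \partial_\mu V + [U,V] = 0,
\]
where \(U\) is the \(\mu\)-coefficient in (\ref{A}) and \(V\) is the \(x\)-coefficient above. The comparison goes power by power in \(\mu\):
\begin{itemize}
\item the \(\mu^{-2}\) terms cancel because \(A\) is constant and diagonal;
\item the \(\mu^{-1}\) terms reproduce \((xG^{-1}G_x)_x = 4x[A,G^{-1}\overline{A}G]\), up to the normalising constants;
\item the \(\mu^{0}\) terms give the conjugate equation, which follows from the first one using \(\overline{G} = G^{-1}\).
\end{itemize}
Thus compatibility holds if and only if the radial tt*-equation holds.

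Third, I will invoke the general theorem (Theorem 4.1 of \cite{FIKN2006}): for a system with Poincaré rank one poles at \(0\) and \(\infty\) and a diagonal leading term with distinct eigenvalues, here guaranteed by (DB), the Stokes matrices and connection matrix are independent of \(x\) if and only if there exists a compatible \(x\)-equation. The direction from the equation to isomonodromy follows directly from the Lax pair. For the converse, I set \(\Psi_x\Psi^{-1} =: V\). Isomonodromy makes \(V\) single-valued and holomorphic in \(\mu\in\mathbb{C}^*\), and the asymptotics at \(0\) and \(\infty\) force \(V\) to be a Laurent polynomial of the displayed form. Compatibility then yields the equation by the second step.

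The main obstacle is this converse direction. It requires the distinct eigenvalues so that the formal solutions at both poles are normalised uniquely. It also requires controlling the diagonal terms of \(V\), which I will fix using the \(\mu\leftrightarrow\)\(\overline{\mu}^{-1}\) reality symmetry that comes from \(\overline{G} = G^{-1}\).
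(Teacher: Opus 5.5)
Your strategy matches the paper's: the paper proves this proposition only by citing Theorem 4.1 of \cite{FIKN2006}, and the companion \(x\)-equation appears explicitly in Section 2.5. The problem is the one concrete computation you propose. Your \(x\)-equation has the wrong signs, and with it step two fails.

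Write \(U\) for the coefficient in (\ref{A}), \(W=\tfrac12 G^{-1}G_x\), \(B=G^{-1}\overline{A}G\), and take your \(V=-\mu^{-1}A+W+\mu B\). Then \(U_x-V_\mu+[U,V]\) behaves as follows.
\begin{itemize}
\item At order \(\mu^{-2}\) the coefficient is \(-2A\neq 0\). The terms \(\partial_x(-\mu^{-2}xA)=-\mu^{-2}A\) and \(-\partial_\mu(-\mu^{-1}A)=-\mu^{-2}A\) add instead of cancelling, and the commutator terms at this order cancel each other.
\item At order \(\mu^{-1}\) the commutators \([-\mu^{-2}xA,\mu B]\) and \([xB,-\mu^{-1}A]\) cancel. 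This leaves \((xG^{-1}G_x)_x=0\), not the tt*-equation.
\item At order \(\mu^{0}\) you get \(B_x=0\).
\end{itemize}
So your pair is never compatible, and it cannot be the deformation equation of an isomonodromic family of (\ref{A}).

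The signs are forced by (\ref{A}) itself. Near \(\mu=0\), solutions behave like \((I+\psi^{(0)}_1\mu+\cdots)e^{xA/\mu}\), so \(\Psi_x\Psi^{-1}=\mu^{-1}A+[\psi^{(0)}_1,A]+O(\mu)\). The \(\mu^{-1}\) coefficient of (\ref{A}) is \(x[A,\psi^{(0)}_1]\), so the constant term is \(-\tfrac12G^{-1}G_x\). Near \(\infty\), \(\Psi\sim G^{-1}(I+O(\mu^{-1}))e^{\mu x\overline{A}}\). Hence
\[
\Psi_x=\left(\mu^{-1}A-\tfrac12 G^{-1}G_x+\mu G^{-1}\overline{A}G\right)\Psi ,
\]
which is what the paper writes in Section 2.5. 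With this \(V\):
\begin{itemize}
\item the \(\mu^{-2}\) terms cancel;
\item the \(\mu^{-1}\) terms give exactly \((xG^{-1}G_x)_x=4x[A,G^{-1}\overline{A}G]\), with no normalising constants;
\item the \(\mu^{0}\) terms hold identically, since \((G^{-1}\overline{A}G)_x=-[G^{-1}G_x,G^{-1}\overline{A}G]\). They are not a conjugate equation, and \(\overline{G}=G^{-1}\) is not used there.
\end{itemize}

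The same expansion settles your worry about diagonal terms in the converse direction. The matrix \([\psi^{(0)}_1,A]\) has zero diagonal. There are no formal-monodromy factors \(\mu^{\Lambda}\) at \(0\) or \(\infty\), because \(G^t=G^{-1}\) makes \(G^{-1}G_x\) and \(G_xG^{-1}\) skew-symmetric. So the \(\mu\leftrightarrow\overline{\mu}^{-1}\) reality symmetry is not needed.

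Finally, your first step is unnecessary, because the proposition concerns (\ref{A}) directly. Its claimed homogeneity under \((\mu,x)\mapsto(c\mu,cx)\) is also false, since \(G(x)\) is not scale-invariant. The symmetry actually behind (\ref{A}) is the rotation \((t,\lambda)\mapsto(e^{\sqrt{-1}\theta}t,e^{\sqrt{-1}\theta}\lambda)\) of the radial connection.
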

Thus, a tt*-equation over \(\mathbb{C}^*\) with the conditions (\ref{DB}), (\ref{R}) can be formulated as a meromorphic differential equation (\ref{A}). In the case of Frobenius manifold, this formulation was introduced by Dubrovin in \cite{D1993}. 

\begin{ex}
	In Example \ref{ex1}, put
	\begin{equation}
		(\tau_1,\tau_2,\tau_3) = (e_1,e_2,e_3)\frac{1}{\sqrt{3}}\left(\begin{array}{ccc}
			1 & 1 & 1 \\
			1 & \omega^2 & \omega \\
			1 & \omega & \omega ^2
		\end{array}\right)\left(\begin{array}{ccc}
			1 & 0 & 0 \\
			0 & \omega^{-\frac{1}{2}} & 0 \\
			0 & 0 & \omega^{-1}
		\end{array}\right), \nonumber
	\end{equation}
	where \(\omega = e^{i\frac{2}{3}\pi}\). Then we have
	\begin{equation}
		\eta(\tau_i,\tau_j) = \delta_{ij},\ \Phi(\tau_1,\tau_2,\tau_3) = (\tau_1,\tau_2,\tau_3)\left(\begin{array}{ccc}
			1 & 0 & 0 \\
			0 & \omega & 0 \\
			0 & 0 & \omega^2
		\end{array}\right)dt \nonumber
	\end{equation}
	and
	\begin{equation}
		G = (g(\tau_i,\tau_j)) = \exp{\left(-i\frac{\sqrt{3}}{3}w\left(\begin{array}{ccc}
				0 & 1 & 1 \\
				-1 & 0 & 1 \\
				-1 & -1 & 0
			\end{array}\right)\right)}. \nonumber
	\end{equation}
	Assume that \(w\) is a radial solution \(w = w(|t|)\) to the Tzitzeica equation, then the corresponding meromorphic differential equation is given by
	\begin{equation}
		\Psi_{\mu} = \left(-\mu^{-2} xA + \mu^{-1} \frac{x}{2} G^{-1} G_x + x G^{-1} \overline{A} G \right) \Psi,\ \ \ \mu \in \mathbb{C}, \nonumber
	\end{equation}
	where
	\begin{equation}
		A = \left(\begin{array}{ccc}
			1 & 0 & 0 \\
			0 & \omega & 0 \\
			0 & 0 & \omega^2
		\end{array}\right),\ G^{-1}G_x = -i\frac{\sqrt{3}}{3}w_x\left(\begin{array}{ccc}
			0 & 1 & 1 \\
			-1 & 0 & 1 \\
			-1 & -1 & 0
		\end{array}\right). \nonumber
	\end{equation}
	\qed
\end{ex}\vskip\baselineskip

Next, we investigate the monodromy data of (\ref{A}).

\subsection{The Stokes matrices and the Stokes factors}
We now describe the Stokes data of (\ref{A}) following Section 3.3 of \cite{GIL20152}. We start from the meromorphic differential equation (\ref{A}), where \(A = {\rm diag}(u_1,\cdots,u_n)\) and \(G = G(x): \mathbb{R}_{> 0} \to {\rm GL}_n \mathbb{C}\) is a solution to \(\left(x G^{-1} G_x \right)_x = 4x\left[A,G^{-1} \overline{A} G \right] \) such that \(G^{-1} = \overline{G} = G^t \).\vskip\baselineskip

From Proposition 1.1 of \cite{FIKN2006}, there exists a unique formal solution of (\ref{A}) at \(\mu = \infty \) of the form
\begin{equation}
	\Psi^{(\infty) }(\mu) = G^{-1} \left(I + \sum_{k = 1}^{\infty} \psi_k^{(\infty)} \mu^{-k} \right) \exp{\left(\mu x \overline{A}\right)}, \nonumber
\end{equation}

Let \(\delta > 0 \) and
\begin{equation}
	\Omega_1^{(\infty)} = \left\{\mu \in \mathbb{C}^* \ | \ -\pi < {\rm arg}(\mu) < \delta \right\}, \nonumber
\end{equation}
and \(\Omega_k^{(\infty)} = e^{-\sqrt{-1} (k-1) \pi } \Omega_1^{(\infty)}\ (k \in \mathbb{Z}) \). We choose the ordering of \(u_1,\dots,u_n \) and \(\delta \) such that
\begin{equation}
	{\rm Re}[\mu^{-1 }u_1] < \cdots < {\rm Re}[\mu^{-1}u_n],\ \ \ \mu \in \Omega_1^{(\infty)} \cap \Omega_2^{(\infty)}. \label{2}
\end{equation}
From Theorem 1.4 of \cite{FIKN2006}, there exist unique solutions \(\Psi_k^{(\infty)}(\mu) \) of (\ref{A}) satisfying the asymptotic condition
\begin{equation}
	\Psi_k^{(\infty)}(\mu) \sim \Psi^{(\infty)}(\mu) \ \ \ {\rm as}\ \ \mu \to \infty,\ \ \mu \in \Omega_k^{(\infty)}. \nonumber
\end{equation}
\begin{Def}
	The Stokes matrix \(S_k^{(\infty)} \in {\rm SL}_n \mathbb{C}\) is defined by
	\begin{equation} 
		S_k^{(\infty)} = \Psi_k^{(\infty)}(\mu)^{-1} \Psi_{k+1}^{(\infty)}(\mu),\ \ \ k \in \mathbb{Z},\  \mu \in \Omega_k^{(\infty)} \cap \Omega_{k+1}^{(\infty)}. \nonumber
	\end{equation}
\end{Def}\vskip\baselineskip
	
	Next, we define the Stokes factor.
	\begin{Def}
		The Stokes ray \(R_{jl}\) is defined by
		\begin{equation}
			R_{jl} = \left\{\mu \in \mathbb{C}^*\ | \ {\rm arg}(\mu) = {\rm arg}(u_j-u_l) - \frac{\pi}{2}\right\},\ \ \ \ \ j,l = 1,\dots,n. \nonumber
		\end{equation}
		We denote by \(R_j = R_{ab}\) the rays numbered starting from the first one below the positive real axis and call \(R_j\) a separating ray (Figure \ref{Fig1}).
		
		\begin{figure}[h]
			\begin{tikzpicture}[scale = 1]
				\draw[white] (-6,-2) grid (6,1);
				
				\draw[white] (0,-0.5) circle (1.5);
				\draw (-5,-0.5)--(5,-0.5);
				\draw[white] (0,-2)--(0,1);
				
				\draw[
				] (0,-0.5) -- ++(-10:2.5cm);
				\draw[
				] (0,-0.5) -- ++(-25:2.5cm);
				\draw[
				] (0,-0.5) -- ++(-100:2cm);
				\draw[
				] (0,-0.5) -- ++(-165:2.5cm);
				
				\draw[
				] (0,-0.5) -- ++(170:2.5cm);
				\draw[
				] (0,-0.5) -- ++(155:2.5cm);
				\draw[
				] (0,-0.5) -- ++(80:2cm);
				\draw[
				] (0,-0.5) -- ++(15:2.5cm);
				
				\draw[
				] (0,-0.5) -- ++(5:2.5cm);
				\draw[
				] (0,-0.5) -- ++(-175:2.5cm);
				\draw[
				] (2.2,-0.5) arc (0:5:2.2);
				\coordinate (d) at (2.5,-0.3) node at (d) [right] {\small \(\delta\)};

				\draw (1.2,-0.5) arc (0:5:1.2);
				\draw (1.2,-0.5) arc (0:-180:1.2);
				
				\draw (-1.5,-0.5) arc (-180:-175:1.5);
				\draw (-1.5,-0.5) arc (-180:-360:1.5);
				
				\draw[thick] ([shift={(0,-0.5)}]-45:1.8) arc [radius=1.8, start angle=-45, end angle=-46];
				\draw[thick] ([shift={(0,-0.5)}]-50:1.8) arc [radius=1.8, start angle=-50, end angle=-51];
				\draw[thick] ([shift={(0,-0.5)}]-55:1.8) arc [radius=1.8, start angle=-55, end angle=-56];
				
				\draw[thick] ([shift={(0,-0.5)}]-130:1.8) arc [radius=1.8, start angle=-130, end angle=-131];
				\draw[thick] ([shift={(0,-0.5)}]-135:1.8) arc [radius=1.8, start angle=-135, end angle=-136];
				\draw[thick] ([shift={(0,-0.5)}]-140:1.8) arc [radius=1.8, start angle=-140, end angle=-141];

				\draw[thick] ([shift={(0,-0.5)}]130:2) arc [radius=2, start angle=130, end angle=131];
				\draw[thick] ([shift={(0,-0.5)}]135:2) arc [radius=2, start angle=135, end angle=136];
				\draw[thick] ([shift={(0,-0.5)}]140:2) arc [radius=2, start angle=140, end angle=141];
				
				\draw[thick] ([shift={(0,-0.5)}]40:2) arc [radius=2, start angle=40, end angle=41];
				\draw[thick] ([shift={(0,-0.5)}]45:2) arc [radius=2, start angle=45, end angle=46];
				\draw[thick] ([shift={(0,-0.5)}]50:2) arc [radius=2, start angle=50, end angle=51];
				
				
				\coordinate (R1) at (2.5,-1) node at (R1) [right] {\(R_1\)};
				\coordinate (R2) at (2.3,-1.7) node at (R2) [right] {\(R_2\)};
				\coordinate (Rj) at (0,-2.5) node at (Rj) [above] {\(R_j\)};
				\coordinate (Rm) at (-2,-1) node at (Rm) [below] {\(R_m\)};
				\coordinate (Rm+1) at (-2.5,0) node at (Rm+1) [left] {\(R_{m+1}\)};
				\coordinate (Rm+2) at (-2.2,0.7) node at (Rm+2) [left] {\(R_{m+2}\)};
				\coordinate (Rm+j) at (-0.3,1.3) node at (Rm+j) {\(R_{m+j}\)};
				\coordinate (R2m) at (2.5,0.5) node at (R2m) {\(R_{2m}\)};
				
				\coordinate (m) at (4,1.2) node at (m) {\small \(m=\frac{1}{2}(n-1)n\)};
			\end{tikzpicture}
			\caption{The Stokes rays}
			\label{Fig1}
		\end{figure}
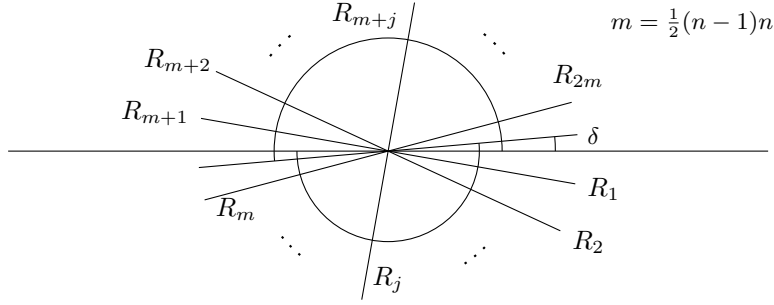
	\end{Def}
	
	In this paper, for simplicity we assume
	\begin{equation}
		R_{jl} \neq R_{ab}\ \ \ \ \ (j,l) \neq (a,b). \tag{PD}
	\end{equation}
	From the condition (PD), we have \(R_j \neq R_l\ (j \neq l)\).
	
	\begin{lem}
		We have
		\begin{itemize}
			\item [(1)] \({\rm arg}(R_{jl}) \in (-\pi,0]\) for \(j < l\)
			
			\item [(2)] \(R_1 = R_{j,j+1}\) for some \(j\)
		\end{itemize}
	\end{lem}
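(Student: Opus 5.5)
Both parts come from the ordering condition (\ref{2}), read on the two sides of the positive real axis, so I would prove them by direct computation with the ray angles rather than by any deeper result.

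Write \(\theta_{jl} = {\rm arg}(u_j - u_l) - \frac{\pi}{2}\), so that \(R_{jl}\) is the ray of argument \(\theta_{jl}\). Note that \(R_{lj}\) is the opposite ray \(-R_{jl}\). For \(\mu = re^{i\phi}\) we have
\begin{equation}
{\rm Re}[\mu^{-1}(u_j-u_l)] = r^{-1}|u_j-u_l|\cos\!\left(\phi - {\rm arg}(u_j-u_l)\right) = -r^{-1}|u_j-u_l|\sin(\phi - \theta_{jl}). \nonumber
\end{equation}
Hence this quantity changes sign exactly when \(\mu\) crosses the line \(R_{jl}\cup R_{lj}\). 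It is negative when \(\phi - \theta_{jl} \in (0,\pi)\) modulo \(2\pi\), and positive when \(\phi - \theta_{jl} \in (-\pi,0)\).

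For (1), the sector \(\Omega_1^{(\infty)}\cap\Omega_2^{(\infty)}\) is \(\{0<{\rm arg}\,\mu<\delta\}\). Condition (\ref{2}) states that for \(j<l\) the quantity \({\rm Re}[\mu^{-1}(u_j-u_l)]\) is negative for every \(\phi \in (0,\delta)\). So no line \(R_{jl}\cup R_{lj}\) passes through this sector. This is the reason \(\delta\) is taken small, and I read the ordering as part of the choice of \(\delta\) when (PD) holds.

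Taking \(\phi\to 0^+\) gives \(\sin(-\theta_{jl})\ge 0\) modulo \(2\pi\), that is, \(\theta_{jl}\in[-\pi,0]\) modulo \(2\pi\). The endpoint \(-\pi\) is excluded as follows. If \(\theta_{jl}=-\pi\), then \(\phi-\theta_{jl}=\phi+\pi\in(\pi,\pi+\delta)\), and the sine there is negative, which contradicts the inequality. Hence \({\rm arg}(R_{jl})\in(-\pi,0]\).

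For (2), the rays with argument in \((-\pi,0]\) are exactly the \(R_{jl}\) with \(j<l\), one for each pair, since each line contributes exactly one of its two rays to this half-range. The first ray below the positive real axis is the one whose argument is the maximum of these \(\theta_{jl}\); call it \(R_1 = R_{ab}\) with \(a<b\). I now argue that \(b = a+1\).

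Take \(\mu\) just below \(R_1\), that is, \(\phi = \theta_{ab} - \epsilon\). Moving from the sector \((0,\delta)\) down to this point, \(\mu\) crosses only the line through \(R_{ab}\); by (PD) no other line lies in between. Therefore, at this point, the only sign flip in the ordering \({\rm Re}[\mu^{-1}u_1]<\cdots<{\rm Re}[\mu^{-1}u_n]\) is the one between \(u_a\) and \(u_b\).

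Suppose there were \(c\) with \(a<c<b\). The relative positions of \(c\) with \(a\) and of \(c\) with \(b\) are unchanged, so \({\rm Re}[\mu^{-1}u_a] < {\rm Re}[\mu^{-1}u_c] < {\rm Re}[\mu^{-1}u_b]\) still holds. This contradicts the flip \({\rm Re}[\mu^{-1}u_a] > {\rm Re}[\mu^{-1}u_b]\). Hence \(b=a+1\), and \(R_1=R_{a,a+1}\).

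The main obstacle is the boundary bookkeeping. One must treat arguments modulo \(2\pi\) with care, exclude the endpoint \(-\pi\), and make sure that (PD) together with the small choice of \(\delta\) rules out any ray inside \((0,\delta)\) or coinciding with the real axis from above.
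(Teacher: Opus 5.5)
Your approach is the right one, and for (1) it is what the paper's one-line proof (``follows from the ordering'') has in mind. However, the computation you wrote for (1) contains two errors that happen to cancel.

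\textbf{The sign.} Since ${\rm arg}(u_j-u_l)=\theta_{jl}+\frac{\pi}{2}$ and $\cos(x-\frac{\pi}{2})=\sin x$, the correct identity is ${\rm Re}[\mu^{-1}(u_j-u_l)]=+r^{-1}|u_j-u_l|\sin(\phi-\theta_{jl})$. As a check, take $u_j-u_l=1$: the left side is $r^{-1}\cos\phi$, and your formula gives $-r^{-1}\cos\phi$.

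\textbf{The sector.} $\Omega_1^{(\infty)}\cap\Omega_2^{(\infty)}$ is not $\{0<{\rm arg}\,\mu<\delta\}$. Since $\Omega_2^{(\infty)}=e^{-\sqrt{-1}\pi}\Omega_1^{(\infty)}=\{-2\pi<{\rm arg}\,\mu<-\pi+\delta\}$, the overlap on the universal cover, where the $\Psi_k^{(\infty)}$ live, is $\{-\pi<{\rm arg}\,\mu<-\pi+\delta\}$. Projected to $\mathbb{C}^*$, the two sectors also meet in $\{0<{\rm arg}\,\mu<\delta\}$. But (\ref{2}) cannot hold on both slivers, because $\mu\mapsto-\mu$ reverses every inequality. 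The intended sliver is the one next to the negative real axis; only that reading agrees with the normalization ${\rm Re}[u_1]\ge\cdots\ge{\rm Re}[u_n]$ of Section 2.5.

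Correcting only one of these errors yields $\theta_{jl}\in[\delta,\pi]$, the opposite half-plane. So the argument as written does not establish (1). With both corrected, put $\phi=-\pi+\epsilon$ with $\epsilon\in(0,\delta)$. The condition becomes $\sin(\theta_{jl}-\epsilon)<0$ for all such $\epsilon$, which gives $\theta_{jl}\in[-\pi+\delta,0]\subset(-\pi,0]$. The endpoint $-\pi$ is then excluded with no extra work.

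Your argument for (2) survives this correction, but the inequalities must be reversed: on $\{0<{\rm arg}\,\mu<\delta\}$ one has ${\rm Re}[\mu^{-1}u_1]>\cdots>{\rm Re}[\mu^{-1}u_n]$. The pieces then fit as follows.
\begin{itemize}
\item No Stokes line meets this sliver, because the rays $R_{lj}$ with $j<l$ have arguments in $[\delta,\pi]$.
\item Maximality of $\theta_{ab}$, rather than (PD), is what rules out other Stokes lines between that sliver and $R_1$.
\item (PD) is what ensures that only the pair $(a,b)$ flips on crossing $R_1$.
\item Transitivity then forces $b=a+1$, as you say.
\end{itemize}

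This is a different route from the paper's. The paper argues by contradiction directly from (1): it asserts that if $R_1$ joined non-adjacent indices, some adjacent-index ray $R_{j,j+1}$ would have argument in $(0,\pi)$. That terse argument is most naturally made rigorous through angular convexity of $u_j-u_l=\sum_{k=j}^{l-1}(u_k-u_{k+1})$. Your ordering-flip argument is more transparent and makes the separate roles of maximality and (PD) explicit, once the orientation of the inequalities is fixed.
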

	\begin{proof}
		Statement (1) follows from the ordering of \(u_1,\dots,u_n\). \\
		For (2), suppose that there exists a \(l\) such that \(l > j+1\) and \({\rm arg}(R_{l,l+1}) < {\rm arg}(R_{jl}) \le 0\). Then, we have \({\rm arg}(R_{j,j+1}) \in (0,\pi)\). This contradicts (1).
	\end{proof}\vskip\baselineskip
	
	Let \(m = \frac{1}{2}(n-1)n\), and set \(\theta_j = -{\rm arg}(R_{j}) \in [0,2\pi)\) and we put
	\begin{equation}
		\Omega_j = \left\{\mu \in \mathbb{C}^*\ | \ -\theta_{m+j} < {\rm arg}(\mu) < -\theta_j + \delta \right\},\ \ \ j=1,\dots,m. \nonumber
	\end{equation}
	
	From Theorem 1.4 of \cite{FIKN2006} there exist unique solutions \(\Psi_k(\mu)\) of (\ref{A}) satisfying
	\begin{equation}
		\Psi_k \sim \Psi^{(\infty)}(\mu)\ \ \ {\rm as}\ \ \mu \to \infty,\ \mu \in \Omega_k \nonumber
	\end{equation}
	and \(\Psi_{(k-1)m+1}(\mu) = \Psi_k^{(\infty)}(\mu)\ (k \in \mathbb{Z})\).
	
	\begin{Def}
		The Stokes factor \(K_j \in {\rm SL}_n \mathbb{C}\) is defined by
		\begin{equation}
			K_j = \Psi_j(\mu)^{-1}\Psi_{j+1}(\mu),\ \ \ \ \ \mu \in \Omega_j \cap \Omega_{j+1}. \nonumber
		\end{equation}
		(See Figure \ref{Fig2})
		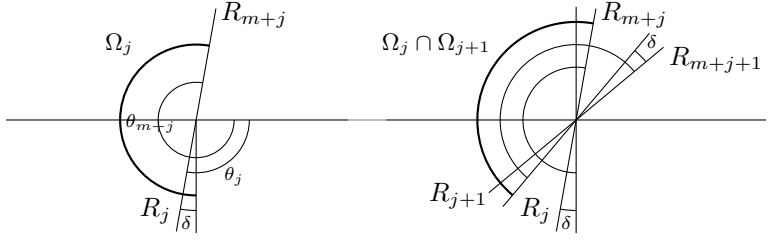
\begin{figure}[h]
			\begin{tikzpicture}[scale = 1]
				\draw[white] (-6,-2) grid (6,1);

				\draw[] (-2.5,-0.5) -- ++(-100:1.5cm);
				\draw[] (-2.5,-0.5) -- ++(80:1.5cm);
				\draw[] (-2.5,-0.5) -- ++(-90:1.5cm);
				
				\draw[] ([shift={(-2.5,-0.5)}]0:0.7) arc [radius=0.7, start angle=0, end angle=-100];
				\draw[] ([shift={(-2.5,-0.5)}]0:0.5) arc [radius=0.5, start angle=0, end angle=-280];
				
				\draw[thick] ([shift={(-2.5,-0.5)}]-90:1) arc [radius=1, start angle=-90, end angle=-280];
				
				\coordinate (A) at (-2,-1) node at (A) [below] {\scriptsize \(\theta_j\)};
				\coordinate (B) at (-3.1,-0.3) node at (B) [below] {\scriptsize \(\theta_{m+j}\)};
				\draw[] ([shift={(-2.5,-0.5)}]-90:1.2) arc [radius=1.2, start angle=-90, end angle=-100];
				\coordinate (C) at (-2.63,-1.65) node at (C) [below] {\scriptsize \(\delta\)};
				
				\coordinate (Rj') at (-2.7,-1.7) node at (Rj') [left] {\(R_j\)};
				\coordinate (Rm+j') at (-2.3,0.9) node at (Rm+j') [right] {\(R_{m+j}\)};
				
				\coordinate (O) at (-3.2,0.5) node at (O) [left] {\small \(\Omega_j\)};

				
				\draw (-5,-0.5)--(5,-0.5);
				
				\draw[] (2.5,-0.5) -- ++(-100:1.5cm);
				\draw[] (2.5,-0.5) -- ++(80:1.5cm);
				
				\draw[] ([shift={(2.5,-0.5)}]-90:0.7) arc [radius=0.7, start angle=-90, end angle=-280];
				\draw[] (2.5,-0.5) -- ++(-90:1.5cm);
				\draw[] (2.5,-0.5) -- ++(90:1.5cm);
				
				\draw[] (2.5,-0.5) -- ++(-140:1.5cm);
				\draw[] (2.5,-0.5) -- ++(40:1.5cm);
				
				\draw[] ([shift={(2.5,-0.5)}]-130:1) arc [radius=1, start angle=-130, end angle=-320];
				\draw[] (2.5,-0.5) -- ++(-130:1.5cm);
				\draw[] (2.5,-0.5) -- ++(-310:1.5cm);
				
				\draw[thick] ([shift={(2.5,-0.5)}]-130:1.3) arc [radius=1.3, start angle=-130, end angle=-280];
				
				\coordinate (Rj) at (2.3,-1.7) node at (Rj) [left] {\(R_j\)};
				\coordinate (Rm+j) at (2.7,0.9) node at (Rm+j) [right] {\(R_{m+j}\)};
				\coordinate (Rj+1) at (0.95,-1.8) node at (Rj+1) [above] {\(R_{j+1}\)};
				\coordinate (Rm+j+1) at (3.6,0.3) node at (Rm+j+1) [right] {\(R_{m+j+1}\)};
				
				\coordinate (R) at (1.5,0.5) node at (R) [left] {\small \(\Omega_j \cap \Omega_{j+1}\)};
				
				\draw[] ([shift={(2.5,-0.5)}]-90:1.2) arc [radius=1.2, start angle=-90, end angle=-100];
				\coordinate (D) at (2.37,-1.65) node at (D) [below] {\scriptsize \(\delta\)};
				
				\draw[] ([shift={(2.5,-0.5)}]40:1.2) arc [radius=1.2, start angle=40, end angle=50];
				\coordinate (E) at (3.5,0.75) node at (E) [below] {\scriptsize \(\delta\)};
				
				\draw[white] (-0.5,-0.5) -- (0,-0.5);
			\end{tikzpicture}
			\caption{The sectors \(\Omega_j\) and \(\Omega_j \cap \Omega_{j+1}\)}
			\label{Fig2}
		\end{figure}
	\end{Def}
	We obtain the following lemma.
	\begin{lem}
		We have
		\begin{enumerate}
			\item [(1)] for \(R_j = R_{ab}\), \((K_j)_{ll} = 1\ (l=1,\dots,m)\) and \((K_j)_{lk} = 0\) if \((l,k) \neq (a,b)\), i.e.
			\begin{equation}
				K_j = I_n + s_j E_{ab} = \left(\begin{array}{ccccc}
					1 & &  & & \text{\rm \Large 0} \\
					& & \text{\rm \Large 0} & s_j & \\
					& & \ddots & \text{\rm \Large 0} & \\
					& \text{\rm \huge 0} & & & \\
					& & & & 1 \\
				\end{array}\right),
				\ \ \ s_j \in \mathbb{R}, \nonumber
			\end{equation}
			\normalsize
			where \(E_{ab}\) is the elementary matrix
			
			\item [(2)]\(K_{m+j} = (K_j^{-1})^t\) for \(j=1,\dots,m\)
			
			\item [(3)] \(S_1^{(\infty)} = K_1 \cdots K_m\)
		\end{enumerate}
	\end{lem}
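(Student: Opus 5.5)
The plan is to prove the three statements in order (3), (1), (2), with the reality of \(s_j\) handled alongside (2). Throughout I use the uniqueness of the sectorial solutions \(\Psi_k\) from Theorem 1.4 of \cite{FIKN2006}.

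\emph{Statement (3).} This is a telescoping identity. By construction \(\Psi_1=\Psi_1^{(\infty)}\) and \(\Psi_{m+1}=\Psi_2^{(\infty)}\). Hence
\[
K_1\cdots K_m=\Psi_1^{-1}\Psi_2\,\Psi_2^{-1}\Psi_3\cdots\Psi_m^{-1}\Psi_{m+1}=\Psi_1^{(\infty)\,-1}\Psi_2^{(\infty)}=S_1^{(\infty)} .
\]
Each factor is a constant matrix, so it can be evaluated at any point of the relevant overlap. All the \(\Psi_k\) are solutions of the same linear system (\ref{A}), so these products are constant and no analytic continuation issue arises.

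\emph{Statement (1).} On \(\Omega_j\cap\Omega_{j+1}\), both \(\Psi_j\) and \(\Psi_{j+1}\) are asymptotic to \(\Psi^{(\infty)}\). Therefore
\[
e^{-\mu x\overline{A}}K_je^{\mu x\overline{A}}=I+o(1)\quad\text{as }\mu\to\infty\text{ in }\Omega_j\cap\Omega_{j+1},
\]
which gives entrywise
\[
(K_j)_{lk}\,e^{\mu x(\overline{u}_k-\overline{u}_l)}\to\delta_{lk}.
\]
For \(l=k\) this forces \((K_j)_{ll}=1\). For \(l\neq k\), note that \(\mu(\overline{u}_l-\overline{u}_k)\in i\mathbb{R}\) exactly when \(\mu\in R_{lk}\cup(-R_{lk})\). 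So \({\rm Re}[\mu x(\overline{u}_k-\overline{u}_l)]\) changes sign only across the line through \(R_{lk}\).

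The overlap \(\Omega_j\cap\Omega_{j+1}\) has opening angle just over \(\pi\). Under (PD), it contains exactly one Stokes ray of each pair except the boundary ray \(R_j=R_{ab}\). This lets me check the following: for every \((l,k)\neq(a,b)\), the exponential is unbounded along some direction of the overlap, which forces \((K_j)_{lk}=0\). For \((a,b)\) itself, the exponential decays throughout the overlap, so \(s_j\) is unconstrained. The index order (\(a<b\) rather than \(b<a\)) comes from the ordering (\ref{2}) together with Lemma 2.3(1).

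The main work here is the bookkeeping of which Stokes ray lies inside the overlap. I expect to do it by writing \(\Omega_j\cap\Omega_{j+1}=\{-\theta_{m+j}<\arg\mu<-\theta_{j+1}+\delta\}\) and comparing it with the positions of the separating rays \(R_k\).

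\emph{Statement (2) and reality of \(s_j\).} This is where I expect the main obstacle, namely matching the normalizations of the formal solutions. Set \(M(\mu)=-\mu^{-2}xA+\mu^{-1}\tfrac{x}{2}G^{-1}G_x+xG^{-1}\overline{A}G\). Using \(G^t=G^{-1}\), the matrix \(G^{-1}G_x\) is antisymmetric, and a direct computation gives \(M(-\mu)^t=M(\mu)\). It follows that \(\tilde\Psi(\mu):=(\Psi(-\mu)^t)^{-1}\) is again a solution of (\ref{A}).

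The map \(\mu\mapsto-\mu\) sends \(\Omega_j\) to \(\Omega_{m+j}\), since the rays satisfy \(R_{m+j}=-R_j\). Applying the transformation to \(\Psi^{(\infty)}\) gives \(G\,(I+\cdots)^{-t}e^{\mu x\overline{A}}\). This differs from \(\Psi^{(\infty)}\) only by the constant left factor relating \(G\) and \(G^{-1}\). I will absorb that factor by using the reality symmetry \(\overline{G}=G^{-1}\) (coming from \(\kappa\)) combined with complex conjugation of the equation, in the manner of Section 3.3 of \cite{GIL20152}. The aim is to obtain \(\Psi_{m+j}(\mu)=C\,(\Psi_j(-\mu)^t)^{-1}\) for a constant matrix \(C\) that is independent of \(j\).

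Taking quotients of consecutive solutions then gives \(K_{m+j}=(K_j^{-1})^t\). The same antiholomorphic symmetry, applied to a single \(K_j\), shows \(\overline{K_j}=K_j\), that is, \(s_j\in\mathbb{R}\). If the constant \(C\) fails to be trivial, I will instead pass through the solutions at \(\mu=0\), using the symmetry \(\mu\mapsto 1/\overline{\mu}\), exactly as is done for the tt*-Toda case.
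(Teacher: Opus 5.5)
Your route is the paper's. Part (3) is the same telescoping via $\Psi_{(k-1)m+1}=\Psi_k^{(\infty)}$. Part (1) is the same entrywise limit on $\Omega_j\cap\Omega_{j+1}$, combined with locating the Stokes rays in the overlap. For (2) you have found the paper's identity $(\Psi_{m+j}(\mu)^{-1})^t=\Psi_j(e^{\sqrt{-1}\pi}\mu)$.

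There is a small sign slip in (1). Write $\Psi_k=G^{-1}\hat F_k e^{\mu x\overline{A}}$ with $\hat F_k\to I$. Then $K_j=e^{-\mu x\overline{A}}\hat F_j^{-1}\hat F_{j+1}e^{\mu x\overline{A}}$, so it is $e^{\mu x\overline{A}}K_je^{-\mu x\overline{A}}$ that tends to $I$, i.e.\ $(K_j)_{lk}e^{\mu x(\overline{u}_l-\overline{u}_k)}\to\delta_{lk}$. With your sign, the ordering (\ref{2}) would force lower- rather than upper-triangular factors. The paper's displayed limit carries the same slip.

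In (2), the obstacle you anticipate is not there; it comes from a dropped transpose. Since $(G^{-1})^{-t}=G^t=G^{-1}$, you get $\Psi^{(\infty)}(-\mu)^{-t}=G^{-1}\bigl(I+\sum_k\psi_k^{(\infty)}(-\mu)^{-k}\bigr)^{-t}e^{\mu x\overline{A}}$. This has exactly the normalization of $\Psi^{(\infty)}$. Uniqueness of the formal solution, then uniqueness of sectorial solutions together with $\Omega_{m+j}=e^{-\sqrt{-1}\pi}\Omega_j$, give $\Psi_{m+j}(\mu)=\Psi_j(e^{\sqrt{-1}\pi}\mu)^{-t}$ with no constant at all, and $K_{m+j}=(K_j^{-1})^t$ follows.

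The remedy you sketch would not have worked anyway. A constant \emph{left} factor $C$ does not map solutions of (\ref{A}) to solutions unless $C$ commutes with the coefficient matrix. Also, complex conjugation of (\ref{A}) is not, in general, a symmetry fixing $\mu=\infty$.

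The genuine gap is the claim $s_j\in\mathbb{R}$, which the paper's proof does not address either. Using $\overline{G}=G^{-1}$ (so $\overline{G^{-1}G_x}=-G_xG^{-1}$), one checks that the antiholomorphic symmetry available in general is $\Psi(\mu)\mapsto G^{-1}\overline{\Psi(1/\overline{\mu})}$. It swaps the irregular singular points $0$ and $\infty$. Applied to a single $K_j$, it therefore relates $\overline{K_j}$ to the Stokes factors at $\mu=0$, not to $K_j$ itself.

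What is missing is a global input: each $\Psi_j$ must also satisfy $\Psi_j(\mu)\sim(I+O(\mu))e^{\mu^{-1}xA}$ as $\mu\to 0$ in $\Omega_j$. This is what underlies the assertion in Proposition~\ref{prop3.2} that $Y(0)=G$ exists. Given that, note $\arg(1/\overline{\mu})=\arg\mu$, so the solution $G^{-1}\overline{\Psi_j(1/\overline{\mu})}$ has the asymptotics $G^{-1}(I+O(\mu^{-1}))e^{\mu x\overline{A}}$ in $\Omega_j$. Hence it equals $\Psi_j$, and $\overline{K_j}=K_j$ follows. You should either prove this identification of the data at $0$ and $\infty$ or state it explicitly as an assumption.
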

	\begin{proof}
		Statement (1) follows from
		\begin{align}
			(K_j)_{lk} &= \lim_{\substack{
					\mu \rightarrow \infty,\\ \mu \in \Omega_j^\cap \Omega_{j+1}}} \left(\Psi_j(\mu)^{-1} \Psi_{j+1}(\mu) \right)_{lk} 
			= \lim_{\substack{
					\mu \rightarrow \infty,\\ \mu \in \Omega_j \cap \Omega_{j+1}}} \left(\delta_{lk} + O(\mu^{-1}) \right) e^{\mu x (\overline{u}_l - \overline{u}_k)} \nonumber
		\end{align}
		and the fact that \(\Omega_j \cap \Omega_{j+1}\) contains \(R_{j+1},\dots,R_{m+j}\) but excludes \(R_j = R_{ab}\).\vspace{2mm}
		
		\noindent Statements (2) and (3) follow from \((\Psi_{m+j}(\mu)^{-1})^t = \Psi_j(e^{\sqrt{-1}\pi}\mu)\) and \(\Psi_k^{(\infty)}(\mu) = \Psi_{m(k-1)+1}(\mu)\), respectively.
	\end{proof}
	
	Thus, given a tt*-structure there exists the corresponding upper unitriangular matrix \(S_1^{(\infty)}\) and it decompose into the Stokes factors \(K_1,\dots,K_m\). In the following section, we see a relation between tt*-structures and solutions to a Riemann-Hilbert problem.

	\subsection{From tt*-structures to the Riemann-Hilbert problem}
	Given a tt*-structure satisfying (DB) and (R), there exist the corresponding linear system (\ref{A}) and its solutions \(\{\Psi_k^{(\infty)}\}_{k \in \mathbb{Z}}\). We see that \(\{\Psi_k^{(\infty)}\}_{k \in \mathbb{Z}}\) give a solution to a Riemann-Hilbert problem. This formulation is based on an observation of Dubrovin \cite{D1993}.\vskip\baselineskip
	
	For each \(k\), \(\Psi_k^{(\infty)}\) admits an analytic continuations \(\tilde{\Psi}_k\) to the universal covering surface \(\tilde{\mathbb{C}}^*\) of \(\mathbb{C}^*\). We denote this continuation again by \(\Psi_k^{(\infty)}(\mu) \). By uniqueness, we have
	\begin{equation}
		\Psi_{k+2}^{(\infty)}(e^{-2\sqrt{-1}\pi} \mu) = \Psi_k^{(\infty)}(\mu),\ \ \ \ \ \mu \in \Omega_k^{(\infty)}. \nonumber
	\end{equation}
	
	Then, we construct a Riemann-Hilbert problem as follows. Let
	\begin{equation}
		\Omega_- = \bigcup_{k \in \mathbb{Z}} \Omega_{2k+1}^{(\infty)},\ \ \ \Omega_+ = \bigcup_{k \in \mathbb{Z}} \Omega_{2k}^{(\infty)}, \nonumber
	\end{equation}
	and define maps \(Y_{\pm}:\Omega_{\pm} \to {\rm GL}_n \mathbb{C} \) by
	\begin{align}
		&Y_-(\mu) = G\Psi_{2k+1}^{(\infty)}(\mu) e^{-\mu^{-1} x A - \mu x \overline{A}},\ \ \  \mu \in \Omega_{2k+1}^{(\infty)}, \nonumber \\
		&Y_+(\mu) = G\Psi_{2k}^{(\infty)}(\mu) e^{-\mu^{-1} x A - \mu x \overline{A}},\ \ \  \mu \in \Omega_{2k}^{(\infty)}, \nonumber
	\end{align}
	Then \(Y_{\pm} \) are initially defined on \(\tilde{\mathbb{C}}^* \), but they descend to well-defined on \(\mathbb{C}^*\). We put
	\begin{equation}
		G_- = e^{\mu^{-1} x A + \mu x \overline{A} } S_1^{(\infty)} e^{-\mu^{-1} x A - \mu x \overline{A} }, \ \ \ G_+ = e^{\mu^{-1} x A + \mu x \overline{A} } S_2^{(\infty)} e^{-\mu^{-1} x A - \mu x \overline{A} }, \nonumber
	\end{equation}
	and
	\begin{align}
		&\Gamma_- = \left\{\mu \in \mathbb{C}^*\ | \ {\rm arg}(\mu) = -\pi + \delta/2 \right\},\ \ \ \Gamma_+ = \left\{\mu \in \mathbb{C}^*\ | \ {\rm arg}(\mu) = \delta/2 \right\}. \nonumber
	\end{align}
	Then \(Y = \{Y_-,Y_+ \} \) is a solution to the Riemann-Hilbert problem.
	
	\begin{prop}\label{prop3.2}
		The functions \(Y = \{Y_-,Y_+\}\) and \(G_-,G_+ \) satisfy the following properties (RH).
		\begin{enumerate}
			\item [(1)] \(Y_{\pm} \) are holomorphic in \(\Omega_{\pm}\), \vspace{1mm}
			\item [(2)] \(Y_+ = Y_- G_- \) on \(\Gamma_- \) and \(Y_- = Y_+ G_+ \) on \(\Gamma_+ \), \vspace{1mm}
			\item [(3)] \(\lim_{\mu \to \infty } Y_{\pm}(\mu) = I \), \vspace{1mm}
			\item [(4)] \(G_{\pm} \) approach \(I \) exponentially along \(\Gamma_{\pm} \) as \(\mu \to 0,\ \infty \), \vspace{1mm}
			\item [(5)] \({\rm det}G_- = {\rm det}G_+ = 1 \).
		\end{enumerate}
		Hence, \(Y \) is a solution to the Riemann-Hilbert problem (RH) for \(G_-,G_+,\Gamma_+,\Gamma_-\) and because \(G_{\pm} \to I\) as \(\mu \to 0\), the limit \(Y(0)\) exists.
	\end{prop}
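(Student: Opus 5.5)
We verify the five properties in turn, using the asymptotics of \(\Psi_k^{(\infty)}\) at \(\mu=\infty\) and the symmetries of (\ref{A}) coming from \(G^{-1}=\overline{G}=G^t\). The cleanest route is to treat the behaviour at \(\mu=0\) through the symmetry \(\mu \mapsto 1/\overline{\mu}\) (equivalently, a second formal solution at \(\mu=0\)). This symmetry exchanges the roles of \(A\) and \(\overline{A}\) and of the two irregular singularities, and it is exactly why the factor \(e^{-\mu^{-1}xA-\mu x\overline{A}}\) appears in the definition of \(Y_\pm\).

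\emph{Items (1), (2), (5).} Each \(\Psi_k^{(\infty)}\) is a solution of (\ref{A}) and hence holomorphic on \(\tilde{\mathbb{C}}^*\). The relation \(\Psi_{k+2}^{(\infty)}(e^{-2\sqrt{-1}\pi}\mu)=\Psi_k^{(\infty)}(\mu)\) shows that the pieces defining \(Y_-\) on the overlapping sectors \(\Omega_{2k+1}^{(\infty)}\) glue to a single-valued holomorphic function on \(\Omega_-\). The same holds for \(Y_+\) on \(\Omega_+\), which gives (1).

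For (2), note that on \(\Gamma_-\subset\Omega_1^{(\infty)}\cap\Omega_2^{(\infty)}\) we have \(\Psi_2^{(\infty)}=\Psi_1^{(\infty)}S_1^{(\infty)}\) by definition. Inserting \(e^{\mu^{-1}xA+\mu x\overline{A}}e^{-\mu^{-1}xA-\mu x\overline{A}}\) gives \(Y_+=Y_-G_-\). The relation on \(\Gamma_+\) follows in the same way from \(\Psi_3^{(\infty)}=\Psi_2^{(\infty)}S_2^{(\infty)}\) together with the identification \(\Psi_3^{(\infty)}(e^{-2\pi\sqrt{-1}}\mu)=\Psi_1^{(\infty)}(\mu)\). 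Finally, (5) holds because each \(S_k^{(\infty)}\) is unitriangular (upper or lower), so it has determinant \(1\), and conjugation preserves the determinant.

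\emph{Item (3) at \(\infty\).} In \(\Omega_k^{(\infty)}\) we have \(G\Psi_k^{(\infty)}=(I+O(\mu^{-1}))e^{\mu x\overline{A}}\). Multiplying by \(e^{-\mu x\overline{A}}e^{-\mu^{-1}xA}\), and using that \(A,\overline{A}\) are diagonal and commute, gives
\[
Y_\pm=(I+O(\mu^{-1}))\,e^{-\mu^{-1}xA}\to I .
\]
Since the sectors \(\Omega_\pm\) have opening larger than \(\pi\) and together cover a full neighbourhood of \(\infty\), this is uniform.

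\emph{Item (4).} The entries of \(G_-\) are \((S_1^{(\infty)})_{jl}\exp\bigl(x\mu^{-1}(u_j-u_l)+x\mu(\overline{u}_j-\overline{u}_l)\bigr)\) for \(j<l\). By the ordering (\ref{2}), on \(\Gamma_-\) both \({\rm Re}[\mu(\overline{u}_j-\overline{u}_l)]\) and \({\rm Re}[\mu^{-1}(u_j-u_l)]\) are negative. Indeed, for \(\mu=re^{i\phi}\) we have \({\rm Re}[\mu\overline{w}]=r\,{\rm Re}[e^{i\phi}\overline w]\) and \({\rm Re}[\mu^{-1}w]=r^{-1}{\rm Re}[e^{-i\phi}w]\), and these two real parts are equal up to the factor \(r^{\pm1}\). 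Hence the off-diagonal entries decay like \(e^{-c(r+r^{-1})}\) as \(r\to 0,\infty\). The lower-triangular \(G_+\) on \(\Gamma_+\) is handled the same way. This gives (4), and it is the reason \(\Gamma_\pm\) must be placed in the overlaps where the ordering (\ref{2}) holds.

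\emph{The limit \(Y(0)\) — main obstacle.} The main difficulty is existence of \(Y(0)\), that is, boundedness of \(Y\) near \(\mu=0\). My plan is to use the symmetry of (\ref{A}): if \(\Psi(\mu)\) is a solution, then \(\overline{\Psi(1/\overline{\mu})}\), suitably multiplied by \(G\), is again a solution. This uses \(\overline{G}=G^{-1}\) and the fact that the middle coefficient \(\frac{x}{2}G^{-1}G_x\) transforms correctly; that in turn uses \(G^t=G^{-1}\) and the realness of \(x\). Through this symmetry, the uniqueness of the \(\Psi_k^{(\infty)}\) characterized by their asymptotics transports the expansion at \(\infty\) to one at \(0\) of the form \(G^{-1}\cdot(I+O(\mu))e^{\mu^{-1}xA}\) times a constant Stokes-type factor. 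This yields \(Y_\pm=O(1)\) as \(\mu\to0\) in each sector.

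Alternatively, once (1)–(4) hold, the standard theory gives the conclusion directly. The jumps tend exponentially to \(I\) at \(0\) by (4), so the singularity of \(Y\) at \(0\) is removable and \(Y(0)\) exists. I would verify the required mild growth bound at \(0\) via the symmetry argument above.
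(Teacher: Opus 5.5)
\textbf{Items (1)--(5) are fine; the gap is the existence of \(Y(0)\).} Your checks of (1)--(5) are correct, and they are more detailed than the paper, which gives no separate proof and leaves these items implicit in the construction. For \(Y(0)\) the paper only says it exists ``because \(G_\pm\to I\) as \(\mu\to0\)''. That step is where your proposal has a genuine gap.

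\textbf{What the symmetry actually gives.} The map \(\Psi\mapsto G^{-1}\overline{\Psi(1/\overline{\mu})}\) does send solutions of (\ref{A}) to solutions. It turns \(\Psi_k^{(\infty)}\) into the canonical solutions at the origin,
\[
\Psi_k^{(0)}(\mu):=G^{-1}\overline{\Psi_k^{(\infty)}(1/\overline{\mu})}\sim (I+O(\mu))e^{\mu^{-1}xA},
\]
in the same angular sector. No uniqueness statement identifies \(\Psi_k^{(\infty)}\) with \(\Psi_k^{(0)}\): uniqueness at \(\infty\) concerns solutions prescribed at \(\infty\), while \(\Psi_k^{(0)}\) is prescribed at \(0\). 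All you get is \(\Psi_k^{(\infty)}=\Psi_k^{(0)}E_k\) for a constant connection matrix \(E_k\), and hence
\[
Y_\pm=G\,(I+O(\mu))\,e^{\mu^{-1}xA}E_k e^{-\mu^{-1}xA}\,e^{-\mu x\overline{A}},\qquad \mu\to 0 .
\]

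\textbf{Boundedness forces \(E_k\) diagonal.} The \((j,l)\) entry carries \((E_k)_{jl}e^{\mu^{-1}x(u_j-u_l)}\). The sign of \({\rm Re}[\mu^{-1}(u_j-u_l)]\) is opposite on \(\Gamma_-\) and \(\Gamma_+\). So each sector between \(\Gamma_-\) and \(\Gamma_+\) (opening \(\pi\)) contains rays along which this factor blows up. Therefore \(Y_\pm=O(1)\) at \(0\) holds if and only if \(E_k\) is diagonal. A ``constant Stokes-type factor'' is not enough. Moreover, \(Y(0)=G\), which is how \(G\) is recovered in Section 2.5, requires \(E_k=I\).

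\textbf{The symmetries do not force this.} The constraints you can extract from the two symmetries are
\[
E_k\overline{E_k}=I,\qquad E_{k+1}=E_k^{-t},\qquad E_{k+1}=\overline{S_k^{(\infty)}}^{\,-1}E_kS_k^{(\infty)} .
\]
These do not make \(E_k\) diagonal. Already for \(n=2\) and \(S_1^{(\infty)}=I\), every
\[
E=\begin{pmatrix}\sqrt{1+\beta^2}&\sqrt{-1}\,\beta\\ -\sqrt{-1}\,\beta&\sqrt{1+\beta^2}\end{pmatrix},\qquad \beta\in\mathbb{R},
\]
satisfies all three.

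\textbf{Your fallback is circular.} Near the irregular singularity \(\mu=0\), the ODE only gives the a priori bound \(Y_\pm=O(e^{c/|\mu|})\). Exponential decay of the jumps at \(0\) cannot exclude the blow-up above: if \(E_k\) is not diagonal, the jump relations still hold exactly, yet \(Y\) has no limit at \(0\).

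\textbf{What is missing.} The missing ingredient is global in \(x\). You must use that \(G\) exists on the whole of \((0,\infty)\) to show \(E_k=I\). One possible route is the behaviour of \(G\) as \(x\to\infty\), combined with the \(x\)-independence of \(E_k\). Alternatively, triviality of the connection matrix has to be added as a hypothesis. The paper's one-line justification has exactly the same omission, so this point is not settled there either.
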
\vskip\baselineskip
	
	From the above argument, we obtain a Riemann-Hilbert problem for \(G_+,G_-\) with contour \(\Gamma = \Gamma_+ \cup \Gamma_-\) and the corresponding Stokes matrix \(S_1^{(\infty)}\) from a tt*-structure over \(\mathbb{C}^*\). Next, we show that the tt*-structure is characterized by an upper unitriangular matrix \(S_1^{(\infty)} \in {\rm SL}_n \mathbb{R}\) via the Riemann-Hilbert problem.
	
	\subsection{From the Riemann-Hilbert problem to tt*-structures}
	We construct a tt*-structure from a solution \(Y = \{Y_-,Y_+\}\) to the Riemann-Hilbert problem. We follow Section 3.4 of \cite{GIL20152}.
	
	Given an upper-unitriangular matrix \(S_1^{(\infty)} \in {\rm SL}_n \mathbb{R} \) and \(u_1,\dots,u_n \in \mathbb{C} \) satisfying
	\begin{enumerate}
		\item [(i)]  if \(i \neq j \), then \(u_i \neq u_j \), \vspace{1mm}
		\item [(ii)] \({\rm Re}[u_1] \ge \cdots \ge {\rm Re}[u_n] \), \vspace{1mm}
		\item [(iii)] if \({\rm Re}[u_i] = {\rm Re}[u_j] \), then \({\rm Im}[u_i] > {\rm Im}[u_j]\ (i < j) \).
	\end{enumerate}
	We choose \(\delta \in \left(0,\frac{\pi}{2} \right) \) such that
	\begin{equation}
		\sin{\left({\rm arg}(u_j-u_l)-\delta\right)} < 0,\ \ \ \ \ \forall j,l\ (j<l). \nonumber
	\end{equation}
	Let
	\begin{align}
		&\hat{\Omega}_- = \{\mu \in \mathbb{C}^*\ | \ -\pi + \delta/2 < {\rm arg}(\mu) < \delta/2 \}, \nonumber \\
		&\hat{\Omega}_+ = \{\mu \in \mathbb{C}^*\ | \ \delta/2 < {\rm arg}(\mu) < \pi + \delta/2\}, \nonumber
	\end{align}
	and
	\begin{equation}
		\Gamma_- = \{\mu \in \mathbb{C}^*\ | \ {\rm arg}(\mu) = -\pi + \delta/2 \},\ \ \ \Gamma_+ = \{\mu \in \mathbb{C}^*\ | \ {\rm arg}(\mu) = \delta/2 \}. \nonumber
	\end{equation}
	We consider the Riemann-Hilbert problem (RH)
	\begin{enumerate}
		\item [(1)] 
		\(Y_{\pm} \) are holomorphic in \(\hat{\Omega}_{\pm} \) and continuous on the closure of \(\hat{\Omega}_{\pm}\), respectively.
		\item [(2)] \(Y_+ = Y_- G_- \) on \(\Gamma_- \) and \(Y_- = Y_+ G_+ \) on \(\Gamma_+ \), where \(A = {\rm diag}(u_1,\dots,u_n) \) and
		\begin{align}
			&G_- = e^{\mu^{-1} x A + \mu x \overline{A}} S_1^{(\infty)} e^{-\mu^{-1} x A - \mu x \overline{A}}, \nonumber\\
			&G_+ = e^{\mu^{-1} x A + \mu x \overline{A}} S_1^{(\infty) -t} e^{-\mu^{-1} x A - \mu x \overline{A}}. \nonumber
		\end{align}
		\item [(3)] \(\lim_{\mu \to \infty} Y_{\pm}(\mu) = I \),
	\end{enumerate}
	Since \(G_{\pm}\) approach \(I \) exponentially along \(\Gamma = \Gamma_- \cup \Gamma_+ \), the limit \(G = \lim_{\mu \to 0}Y(\mu)\) exists. For a solution to (RH), we have the following properties.
	\begin{lem}\label{lem3.6}
		Let \(Y \) be a solution to (RH). Then we have
		\begin{enumerate}
			\item [(1)] \(Y(-\mu)^{-t} = Y(\mu) \), \vspace{1mm}
			\item [(2)] \(\overline{Y(0)^{-1}} \overline{Y(\overline{\mu}^{-1})} = Y(\mu) \).
		\end{enumerate}
	\end{lem}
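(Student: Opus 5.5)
The plan is to prove both identities by the standard symmetry-plus-uniqueness argument. In each case I will build a new function $\tilde Y=\{\tilde Y_-,\tilde Y_+\}$ from $Y$ and check that it satisfies the same problem (RH): same contour $\Gamma=\Gamma_-\cup\Gamma_+$, same jumps $G_\pm$, and normalization $I$ at $\infty$. Uniqueness then gives $\tilde Y=Y$. Uniqueness holds by the usual Liouville argument. If $Y,Y'$ both solve (RH), then $\det Y\equiv 1$, because $\det G_\pm=1$, $\det Y$ has no jump, and $\det Y\to 1$ at $\infty$ with a removable singularity at $0$ (where $Y(0)$ exists). Hence $Y'Y^{-1}$ has no jump across $\Gamma$, is bounded near $0$, tends to $I$ at $\infty$, and so is identically $I$.

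For (1), set $\tilde Y(\mu)=Y(-\mu)^{-t}$. The map $\mu\mapsto-\mu$ sends $\hat\Omega_-$ to $\hat\Omega_+$ and $\Gamma_-$ to $\Gamma_+$, and vice versa. So I define $\tilde Y_+(\mu)=Y_-(-\mu)^{-t}$ on $\hat\Omega_+$ and $\tilde Y_-(\mu)=Y_+(-\mu)^{-t}$ on $\hat\Omega_-$. These are holomorphic and tend to $I$ at $\infty$.

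For the jump on $\Gamma_-$, take $\mu\in\Gamma_-$, so $-\mu\in\Gamma_+$. There $Y_-(-\mu)=Y_+(-\mu)G_+(-\mu)$, and inverting and transposing gives $\tilde Y_+(\mu)=\tilde Y_-(\mu)G_+(-\mu)^{-t}$. Write $E(\mu)=e^{\mu^{-1}xA+\mu x\overline A}$, so that $E(-\mu)=E(\mu)^{-1}$ and $E$ is diagonal. Then
$G_+(-\mu)^{-t}=\bigl(E(-\mu)S^{-t}E(-\mu)^{-1}\bigr)^{-t}=E(\mu)SE(\mu)^{-1}=G_-(\mu)$.
The $\Gamma_+$ jump works the same way, since $G_-(-\mu)^{-t}=G_+(\mu)$. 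Here one must keep track of the orientation and of which side is labelled $\pm$, but the labelling in (RH) is symmetric under $\mu\mapsto-\mu$.

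For (2), set $\tilde Y(\mu)=\overline{Y(0)}^{-1}\,\overline{Y(\overline\mu^{-1})}$. The map $\mu\mapsto\overline\mu^{-1}$ preserves arguments, so it sends each $\hat\Omega_\pm$ and each $\Gamma_\pm$ to itself while exchanging $0$ and $\infty$. Holomorphy follows because conjugating a function of $\overline\mu^{-1}$ gives a holomorphic function of $\mu$. At infinity, $\tilde Y(\mu)\to\overline{Y(0)}^{-1}\overline{Y(0)}=I$. For the jump on $\Gamma_-$ we get $\tilde Y_+=\tilde Y_-\overline{G_-(\overline\mu^{-1})}$. Using that $S$ is real and $A$ is diagonal,
$\overline{E(\overline\mu^{-1})}=e^{\mu\,x\overline A+\mu^{-1}xA}=E(\mu)$,
so $\overline{G_-(\overline\mu^{-1})}=G_-(\mu)$, and likewise for $G_+$.

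Uniqueness then gives both identities. The main obstacle I expect is not conceptual. It is the careful bookkeeping of the sector and contour conventions ($\delta/2$ shifts, orientation, which boundary values are meant). A secondary point is justifying that $Y(0)$ is invertible and that $\tilde Y$ stays bounded near $0$, which the determinant argument handles. The existence of $Y(0)$ comes from the exponential decay of $G_\pm-I$ as $\mu\to0$, as noted before the lemma.
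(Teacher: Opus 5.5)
Your proposal is correct and follows the same route as the paper, whose proof simply invokes the jump-matrix symmetries \((G_-(-\mu)^{-1})^t = G_+(\mu)\) and \(\overline{G_{\pm}(\overline{\mu}^{-1})} = G_{\pm}(\mu)\) together with uniqueness of the solution to (RH). Beyond the paper, you verify these identities explicitly and check that \(\mu\mapsto-\mu\) and \(\mu\mapsto\overline{\mu}^{-1}\) respect the sectors, contours and normalization. You also supply the determinant/Liouville uniqueness argument, which the paper leaves implicit.
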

	\begin{proof}
		(1) From \((G_-(-\mu)^{-1})^t = G_+(\mu)\) and \(\overline{G_{\pm}(\overline{\mu}^{-1} )} = G_{\pm}(\mu) \) and the uniqueness of the solution to (RH), we obtain the stated result. \\
	\end{proof}
	From Lemma \ref{lem3.6}, we obtain a similar result as Proposition 3.5 of \cite{GIL20152}.
	\begin{prop}
		Let \(Y \) be a solution to (RH) for all \(x \in \mathbb{R} \) and \(G = Y(0) \). Then \(G\) is a positive definite Hermitian metric such that 
		\begin{equation}
			G^{-1} = G^t = \overline{G}, \nonumber
		\end{equation}
		\begin{equation}
			(xG^{-1}G_x) = 4x[A,G^{-1}\overline{A}G]. \nonumber
		\end{equation}
	\end{prop}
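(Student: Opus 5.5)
We follow the strategy of Proposition 3.5 of \cite{GIL20152}: all properties of \(G = Y(0)\) are read off from the symmetries in Lemma \ref{lem3.6}, and the tt*-equation comes from the standard isomonodromy argument applied to \(\Psi = G^{-1} Y e^{\mu^{-1}xA + \mu x\overline{A}}\).

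\emph{Orthogonality and reality.} Put \(\mu = 0\) in Lemma \ref{lem3.6}(1); this gives \(Y(0)^{-t} = Y(0)\), that is, \(G^{-1} = G^t\). Next, let \(\mu \to 0\) in Lemma \ref{lem3.6}(2). Then \(\overline{\mu}^{-1} \to \infty\), so \(\overline{Y(\overline{\mu}^{-1})} \to I\) by (RH)(3), and we obtain \(\overline{G}^{-1} = G\), that is, \(\overline{G} = G^{-1}\).

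\emph{Hermitian and positive definite.} From the two identities, \(\overline{G}^t = (G^{-1})^t = G\), so \(G\) is Hermitian. For positivity we use a continuity argument in \(x\). As \(x \to \infty\), the jumps \(G_\pm\) converge to \(I\) exponentially along \(\Gamma\). The small-norm theory for (RH) then gives \(Y \to I\), hence \(G \to I\), which is positive definite. For all \(x\) we have \(G\overline{G} = I\) with \(G\) Hermitian, so \(G^2 = G\overline{G}^t = G\overline{G} = I\). Hence every eigenvalue of \(G\) is \(\pm 1\). Since \(G\) depends continuously on \(x\), the eigenvalues cannot jump from \(1\) to \(-1\). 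Therefore \(G = I\) for all \(x\).

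This conclusion is suspicious, because it would force \(G = I\) identically. The flaw is the step \(\overline{G}^t = G^{-t}\). Combining \(G^t = G^{-1}\) with Hermitian symmetry is not automatic: Hermitian symmetry must be derived separately, from the symmetry \(\mu \mapsto \overline{\mu}^{-1}\) composed with \(\mu \mapsto -\mu\). The plan is therefore to compute carefully which combination of the two symmetries yields \(\overline{G}^t = G\). Positivity then follows by the continuity argument in \(x\), using that \(G\) is nondegenerate for all \(x\) (because (RH) is solvable) and that \(G \to I\) as \(x \to \infty\). This bookkeeping is the main obstacle.

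\emph{Equation.} Define \(\Psi = G^{-1} Y e^{\mu^{-1}xA + \mu x\overline{A}}\). Its jumps are the constant matrices \(S_1^{(\infty)}\) and \(S_1^{(\infty)-t}\), so \(\Psi_\mu \Psi^{-1}\) and \(\Psi_x \Psi^{-1}\) are single valued on \(\mathbb{C}^*\). Liouville's theorem, together with the expansions of \(Y\) at \(0\) and \(\infty\), shows that they are Laurent polynomials. Matching the leading coefficients gives
\[\Psi_\mu = \left(-\mu^{-2}xA + \mu^{-1}\frac{x}{2}G^{-1}G_x + xG^{-1}\overline{A}G\right)\Psi,\]
possibly after identifying the \(\mu^{-1}\) coefficient using the symmetry (1). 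We obtain a Lax pair, and its compatibility condition is \((xG^{-1}G_x)_x = 4x[A, G^{-1}\overline{A}G]\), exactly as in Theorem 4.1 of \cite{FIKN2006}.
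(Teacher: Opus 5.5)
\textbf{Verdict.} Your overall route is the paper's, and your derivations of \(G^t=G^{-1}\), \(\overline{G}=G^{-1}\) and of the Hermitian property \(\overline{G}^t=(G^{-1})^t=(G^t)^{-1}=G\) are correct. However, the write-up contains a false step that you misdiagnose, and it points to the wrong tool for the one nontrivial computation in the equation part.

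\textbf{The false step \(G^2=I\).} The step that fails is \(G^2=G\overline{G}^t=G\overline{G}\). The second equality silently uses \(\overline{G}^t=\overline{G}\), i.e.\ \(G^t=G\). Combined with \(G^t=G^{-1}\), this is exactly the unproved claim \(G^2=I\), and that claim is false. For instance, the matrix \(G=\exp\bigl(-\sqrt{-1}\tfrac{\sqrt{3}}{3}wM\bigr)\) of the Tzitzeica example in Section 2.2, with \(M\) real antisymmetric, satisfies \(\overline{G}=G^t=G^{-1}\) but has eigenvalues \(1,e^{w},e^{-w}\). So there is no ``bookkeeping obstacle'' about Hermitian symmetry. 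You already proved it, and no further combination of the two symmetries is needed.

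\textbf{What your positivity argument should be.} Delete the \(G^2=I\) paragraph. What remains of your positivity argument is correct, and it is the precise content of the paper's terse ``since \(\det G=1\)''. Note that \(\det G=1\) alone is not enough: \({\rm diag}(-1,-1,1)\) satisfies all the algebraic constraints. The argument runs as follows. First, \(\det Y\equiv 1\), because \(\det G_\pm=1\), \(\det Y\) is bounded at \(0\), and it tends to \(1\) at \(\infty\). Hence \(G(x)\) is Hermitian, continuous in \(x\in(0,\infty)\), never singular, and tends to \(I\) by Theorem 8.1 of \cite{FIKN2006}. Therefore no eigenvalue can cross \(0\), and \(G\) is positive definite.

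\textbf{The \(\mu^{-1}\) coefficient.} For the equation, the coefficient of \(\mu^{-1}\) in \(\Psi_\mu\Psi^{-1}\) cannot be identified via the symmetry \(\mu\mapsto-\mu\) of Lemma \ref{lem3.6}(1). That symmetry maps each of \(0,\infty\) to itself and involves no \(x\)-derivative, so it cannot produce the term \(\tfrac{x}{2}G^{-1}G_x\). It only constrains the local coefficients, e.g.\ it makes \(\psi_1^{(0)}\) symmetric.

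The missing ingredient, which the paper uses, is to expand both \(\Psi_\mu\Psi^{-1}\) and \(\Psi_x\Psi^{-1}\) at \(0\) and at \(\infty\) and compare.
\begin{itemize}
\item The \(\mu^{-1}\) terms of \(\Psi_\mu\Psi^{-1}\) give \([A,\psi_1^{(0)}]=G^{-1}[\psi_1^{(\infty)},\overline{A}]G\).
\item In \(\Psi_x\Psi^{-1}\), the prefactor \(G^{-1}\) contributes an extra \(-G^{-1}G_x\) at \(\infty\), so the \(\mu^0\) terms give \([\psi_1^{(0)},A]=G^{-1}[\psi_1^{(\infty)},\overline{A}]G-G^{-1}G_x\).
\end{itemize}
Together these force \([A,\psi_1^{(0)}]=\tfrac12 G^{-1}G_x\). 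This yields \(\Psi_\mu\Psi^{-1}=-\mu^{-2}xA+\mu^{-1}\tfrac{x}{2}G^{-1}G_x+xG^{-1}\overline{A}G\) and \(\Psi_x\Psi^{-1}=\mu^{-1}A-\tfrac12G^{-1}G_x+\mu G^{-1}\overline{A}G\). Only with both Lax matrices expressed through \(G\) does the compatibility condition give \((xG^{-1}G_x)_x=4x[A,G^{-1}\overline{A}G]\).
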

	\begin{proof}
		From Lemma \ref{lem3.6}, we have \(G^{-1} = \overline{G} = G^t \) and then \(G \) is a Hermitian matrix. From Theorem 8.1 of \cite{FIKN2006}, we have \(\lim_{x \to \infty} G(x) = I \). Since \({\rm det}(G) = 1 \), the matrix \(G \) is positive-definite. We put \(\Psi = G^{-1} Y e^{\mu^{-1} x A + \mu x \overline{A}} \), then we have
		\begin{equation}
			\Psi(\mu) \sim \left\{
			\begin{array}{l}
				\left(I + \psi_1^{(0)} \mu + O(\mu^2) \right) e^{\mu^{-1} x A} \ \ \ \ \ {\rm as}\ \ \mu \to 0, \\
				G^{-1} \left(I + \psi_1^{(\infty)} \mu^{-1} + O(\mu^{-2}) \right) e^{\mu x \overline{A} }\ \ \ \ \ {\rm as}\ \ \mu \to \infty.
			\end{array}
			\right. \nonumber
		\end{equation}
		We obtain
		\begin{equation}
			\Psi_{\mu} \Psi^{-1} = -\mu^{-2} x A + \mu^{-1} x [A,\psi_1^{(0)}] + O(\mu^0), \nonumber
		\end{equation}
		near \(\mu = 0 \) and
		\begin{equation}
			\Psi_{\mu} \Psi^{-1} = \mu x G^{-1} \overline{A} G + x G^{-1}[\psi_1^{(\infty)},\overline{A}]G + O(\mu^{-1}), \nonumber
		\end{equation}
		near \(\mu = \infty\). On the other hand, we obtain
		\begin{equation}
			\Psi_x \Psi^{-1} = \mu^{-1} A + [\psi_1^{(0)},A] + O(\mu), \nonumber
		\end{equation}
		near \(\mu = 0 \) and
		\begin{equation}
			\Psi_x \Psi^{-1} = \mu G^{-1} \overline{A} G + G^{-1}[\psi_1^{(\infty)},\overline{A}]G -G^{-1} G_x + O(\mu^{-1}), \nonumber
		\end{equation}
		near \(\mu = \infty \). Thus, we have
		\begin{equation}
			[A,\psi_1^{(0)}] = G^{-1}[\psi_1^{(\infty)},\overline{A}]G,\ \ \ [\psi_1^{(0)},A] = G^{-1}[\psi_1^{(\infty)},\overline{A}]G -G^{-1} G_x, \nonumber
		\end{equation}
		and then we obtain
		\begin{equation}
			[A,\psi_1^{(0)}] = \frac{1}{2}G^{-1} G_x,\ \ \ [\psi_1^{(\infty)},\overline{A}] = \frac{1}{2} G_x G^{-1}. \nonumber
		\end{equation}
		Hence, we obtain
		\begin{equation}
			\left\{
			\begin{array}{l}
				\Psi_{\mu} \Psi^{-1} = -\mu^{-2} x A + \mu^{-1} \frac{x}{2} G^{-1} G_x + \mu x G^{-1} \overline{A} G, \vspace{2mm} \\
				\Psi_x \Psi^{-1} = \mu^{-1} A - \frac{1}{2} G^{-1} G_x + \mu G^{-1} \overline{A} G.
			\end{array}
			\right. \nonumber
		\end{equation}
		The compatibility condition \((\Psi_{\mu})_x = (\Psi_x)_{\mu} \) gives \((xG^{-1} G_x)_x = 4x[A,G^{-1}\overline{A}G] \). \\
	\end{proof}
	Thus, a solution to (RH) gives a tt*-structure over \(\mathbb{C}^*\).
	\begin{cor}
		Let \(Y \) be a solution to (RH) for all \(x \in \mathbb{R} \) and \(G = Y(0) \) and \(E = \mathbb{C}^* \times \mathbb{C}^n \) be a trivial holomorphic vector bundle of rank \(n\) with a holomorphic structure \(\overline{\partial}_E = \overline{\partial} \) and the standard frame \(\tau_1,\dots,\tau_n \). We define an \({\rm End}(E) \)-valued 1-form
		\begin{equation}
			C(\tau_1,\dots,\tau_n) = (\tau_1,\dots,\tau_n) Adt,\ \ \ t \in \mathbb{C}^*, \nonumber
		\end{equation}
		a Hermitian metric \(g(\tau_i,\tau_j) = G_{ij} \), a holomorphic nondegenerate bilinear form \(\eta(\tau_i,\tau_j) = \delta_{ij} \). Then \((E,\eta,g,C) \) is a tt*-structure over \(\mathbb{C}^* \) and \(G\) is a solution to the tt*-equation.
	\end{cor}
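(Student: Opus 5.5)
The corollary is a direct translation of the preceding proposition into the language of Definition \ref{def2.1}. The plan is to equip the trivial bundle with the given data, compute the Chern connection and the adjoint \(C^{\dagger_g}\) in the frame \(\tau\), and check that the flatness of \(\nabla^\lambda\) reduces exactly to the radial tt*-equation already established. I will take the proposition's conclusions as known: \(G=Y(0)\) is positive definite Hermitian, \(G^{-1}=G^t=\overline{G}\), and \((xG^{-1}G_x)_x=4x[A,G^{-1}\overline{A}G]\). Here \(G\) is a function of \(x=|t|\) on \(\mathbb{C}^*\).

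\textbf{Algebraic conditions.} First check (a) and (b).
\begin{itemize}
\item For (a): in the frame \(\tau\), \(\eta\) is the identity matrix and \(C\) acts by the diagonal matrix \(A\,dt\). Since \(A^t=A\), \(C\) is \(\eta\)-self-adjoint.
\item For (b): define \(\kappa(\tau)=\tau G\), extended conjugate-linearly. Then \(\eta(\kappa(\tau_i),\tau_j)=G_{ij}=g(\tau_i,\tau_j)\), so \(g(a,b)=\eta(\kappa a,b)\) holds on frame elements and hence, by sesquilinearity, everywhere.
\item \(\kappa^2=\mathrm{Id}\) amounts to \(G\overline{G}=I\), which follows from \(\overline{G}=G^{-1}\).
\item \(g\) is a genuine Hermitian metric because \(G\) is Hermitian positive definite. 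Nondegeneracy, symmetry and holomorphicity of \(\eta\) are immediate.
\end{itemize}

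\textbf{Connection forms.} Next compute \(\nabla^\lambda\) in the frame \(\tau\).
\begin{itemize}
\item The holomorphic structure is \(\overline\partial\), so the Chern connection has form \(G^{-1}\partial G\); I will fix the transpose convention consistently with the earlier formula \(\alpha = G^{-1}(\partial G)+\dots\).
\item The adjoint of \(C\) with respect to \(g\) has matrix \(G^{-1}\overline{A}^tG\,d\overline t = G^{-1}\overline{A}G\,d\overline t\).
\item Hence \(\nabla^\lambda\tau=\tau\alpha\) with \(\alpha=G^{-1}\partial G+\lambda^{-1}A\,dt+\lambda G^{-1}\overline{A}G\,d\overline t\), exactly as in Subsection 2.2.
\end{itemize}

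\textbf{Flatness.} Flatness is \(d\alpha+\alpha\wedge\alpha=0\). Split it by powers of \(\lambda\).
\begin{itemize}
\item The \(\lambda^{-1}\) term is \(\overline\partial A+[G^{-1}\partial G,A]\) restricted to \(dt\wedge dt\)-type pieces, and vanishes since \(A\) is constant and the form has type \((2,0)\). The \(\lambda\) term vanishes symmetrically, by type \((0,2)\).
\item The \(\lambda^{-2}\) and \(\lambda^{2}\) terms vanish for the same type reason.
\item The \(\lambda^0\) term is \(\overline\partial(G^{-1}\partial G)+[A,G^{-1}\overline{A}G]\,dt\wedge d\overline t\) (up to sign). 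This is the tt*-equation \(\partial_{\overline t}(G^{-1}\partial_tG)=[A,G^{-1}\overline AG]\) up to the sign/ordering conventions.
\end{itemize}

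\textbf{Main obstacle.} The only real work is matching this \(\lambda^0\) equation with the radial ODE proved in the proposition, with correct constants and signs. Write \(G=G(x)\) with \(x=|t|\), and use \(\partial_t=\tfrac{\overline t}{2x}\partial_x\), \(\partial_{\overline t}=\tfrac{t}{2x}\partial_x\). Then \(\partial_{\overline t}\partial_t\) acting on a radial function gives \(\tfrac{1}{4x}\partial_x(x\,\partial_x\,\cdot)\), so the equation becomes \(\tfrac{1}{4x}(xG^{-1}G_x)_x=[A,G^{-1}\overline AG]\), which is precisely the stated one.

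The subtlety is that \(G^{-1}\partial_tG=\tfrac{\overline t}{2x}G^{-1}G_x\) is not radial, because the factor \(\overline t/x\) has a phase. The phase factors must cancel in the final equation; if they do not, the \(u_j\) must be rescaled by the phase of \(t\), i.e. \(A\) replaced by \(A\) times a phase. I would resolve this by carefully recomputing with the Lax pair \(\Psi_x\Psi^{-1}\) from the proposition, pulled back along \(x=|t|\) with \(\mu\) rescaled by \(\lambda\) and \(t/|t|\). This realizes \(\nabla^\lambda\) as the compatibility of that Lax pair, so flatness follows directly from the proposition.
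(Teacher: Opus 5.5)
Your route is the one the paper intends. The corollary is stated there without proof, as an immediate consequence of the preceding proposition together with the Section 2.2 form of the connection matrix \(\alpha\). Your check of (a), (b) and the reduction of flatness to the \(\lambda^0\)-equation is the natural way to fill this in. However, one step is wrongly justified and another is left unfinished.

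\textbf{The \(\lambda\)-coefficient.} The coefficient of \(\lambda\) in \(d\alpha+\alpha\wedge\alpha\) is not of type \((0,2)\). It is the \((1,1)\)-form \(\bigl(\partial_t(G^{-1}\overline{A}G)+[G^{-1}G_t,G^{-1}\overline{A}G]\bigr)\,dt\wedge d\overline{t}\), so there is no type argument symmetric to the \(\lambda^{-1}\) case. It does vanish, but for a different reason. Since \(\overline{A}\) is constant, \(\partial_t(G^{-1}\overline{A}G)=[G^{-1}\overline{A}G,G^{-1}G_t]\). This is the statement \(\partial_E^g\Phi^{\dagger_g}=0\), the adjoint counterpart of the holomorphicity of \(\Phi\).

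\textbf{The phase worry.} Your ``main obstacle'' is not an obstacle, and you should not leave it conditional. For radial \(F=G^{-1}G_x\) one has \(\partial_{\overline t}\bigl(\tfrac{\overline t}{2x}F\bigr)=\tfrac{1}{2x}F+\overline{t}\,\tfrac{t}{2x}\,\partial_x\bigl(\tfrac{F}{2x}\bigr)=\tfrac{1}{4x}(xF)_x\), because \(t\overline{t}=x^2\). The phases cancel identically. Hence the \(\lambda^0\)-equation \(\partial_{\overline t}(G^{-1}G_t)=[A,G^{-1}\overline{A}G]\) is exactly \((xG^{-1}G_x)_x=4x[A,G^{-1}\overline{A}G]\). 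No Lax-pair detour is needed. Your fallback of multiplying \(A\) by a phase would not be admissible anyway, since \(C=A\,dt\) is fixed in the statement.

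\textbf{A smaller index slip in (b).} With \(\kappa(\tau)=\tau G\) one gets \(\eta(\kappa(\tau_i),\tau_j)=G_{ji}=\overline{G_{ij}}\), not \(G_{ij}\); the paper writes \(\kappa(\tau)=\tau\cdot G\) as well. Take \(\kappa(\tau)=\tau G^t=\tau\overline{G}\) instead. Then \(\kappa^2=\mathrm{Id}\) is still equivalent to \(\overline{G}=G^{-1}\).

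With these corrections your argument is a complete proof.
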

	Hence, tt*-structures are characterized by \(u_1,\dots,u_n \in \mathbb{C}\) with \(u_j \neq u_j\) for \(i \neq j\) and by an upper unitriangular matrix in \({\rm SL}_n \mathbb{R} \). However, the corresponding Stokes matrix depends on the choice of the sign of the frame \(\tau\) and of the coordinate \(t\) on \(\mathbb{C}^*\). This ambiguity was discussed by Dubrovin in the case of tt*-structures over a Frobenius manifold. In the following section, we adapted Dubrovin's argument to the case of tt*-structures over \(\mathbb{C}^*\).

	\section{Ambiguity of Stokes matrices}
	In Section 2, we saw that a tt*-structure satisfying (DB) and (R) corresponds to an upper unitriangular matrix in \({\rm SL}_n \mathbb{R}\). However, the resulting matrix is not uniquely determined. Ambiguities arise from the following choices:
	\begin{itemize}
		\item [(i)] the sign of the frame \(\tau = (\tau_1,\dots,\tau_n)\)
		
		\item [(ii)] the coordinate \(t\) of \(\mathbb{C}^*\)
	\end{itemize}
	
	We introduce an equivalence relation on tt*-structures over \(\mathbb{C}^*\) to avoid these ambiguities.

	\subsection{Ambiguity (i)}
	First, we examine the effect of ambiguity (i). Let \(\varepsilon = {\rm diag}(\varepsilon_1, \dots,\varepsilon_n) \in \{\pm1 \}^n\ (\varepsilon_j^2 = 1)\) and consider the new frame \(\hat{\tau} = \tau \cdot \varepsilon\). The frame \(\hat{\tau} = (\hat{\tau}_1,\dots,\hat{\tau}_n)\) also satisfies
	\begin{equation}
		\eta(\hat{\tau}_i,\hat{\tau}_j) = \delta_{ij},\ \ \ \ \ \Phi(\hat{\tau}_j) = \hat{\tau}_j \cdot \mu_j, \nonumber
	\end{equation}
	but \(\hat{G} = (g(\hat{\tau}_i,\hat{\tau}_j))\) is given by \(\hat{G} = \varepsilon G \varepsilon\). 
	
	\begin{prop}
		For the frame \(\hat{\tau}\), the corresponding Stokes matrix is given by \(\varepsilon S_1^{(\infty)} \varepsilon\).
	\end{prop}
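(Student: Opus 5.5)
The plan is to track how the linear system (\ref{A}) and its canonical solutions transform under the change of frame \(\hat{\tau} = \tau\varepsilon\), and then to read off the new Stokes matrix from its definition.

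\emph{Step 1: the new system.} With respect to \(\hat{\tau}\), the Higgs field is still \(A\,dt\), since \(\varepsilon\) is diagonal and commutes with \(A\). The metric becomes \(\hat{G} = \varepsilon G\varepsilon\). Because \(\varepsilon^{-1}=\varepsilon\), we have \(\hat{G}^{-1}\hat{G}_x = \varepsilon G^{-1}G_x\varepsilon\) and \(\hat{G}^{-1}\overline{A}\hat{G} = \varepsilon G^{-1}\overline{A}G\varepsilon\). Hence the coefficient matrix of (\ref{A}) for \(\hat{G}\) is \(\varepsilon(\cdot)\varepsilon\) applied to the old one. It follows that \(\hat{\Psi} := \varepsilon\Psi\varepsilon\) solves the new system whenever \(\Psi\) solves the old one. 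Note that \(\hat{G}\) still satisfies \(\hat{G}^{-1}=\overline{\hat{G}}=\hat{G}^t\) and the radial tt*-equation, so the previous constructions apply.

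\emph{Step 2: the formal and canonical solutions.} Conjugating the formal solution gives
\begin{equation}
\varepsilon\Psi^{(\infty)}\varepsilon = \hat{G}^{-1}\left(I+\sum_k \varepsilon\psi_k^{(\infty)}\varepsilon\,\mu^{-k}\right)\exp(\mu x\overline{A}), \nonumber
\end{equation}
where we use \(\varepsilon G^{-1}\varepsilon = \hat{G}^{-1}\) and the fact that \(\varepsilon\) commutes with \(\exp(\mu x\overline{A})\). This has the normalized form required in the earlier discussion, so by uniqueness of the formal solution (Proposition 1.1 of \cite{FIKN2006}) it equals \(\hat{\Psi}^{(\infty)}\). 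The ordering condition (\ref{2}) depends only on the \(u_j\), so the sectors \(\Omega_k^{(\infty)}\) are the same as before. The function \(\varepsilon\Psi_k^{(\infty)}\varepsilon\) is a genuine solution with asymptotics \(\hat{\Psi}^{(\infty)}\) on \(\Omega_k^{(\infty)}\). By the uniqueness in Theorem 1.4 of \cite{FIKN2006}, we conclude \(\hat{\Psi}_k^{(\infty)} = \varepsilon\Psi_k^{(\infty)}\varepsilon\).

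\emph{Step 3: conclusion.} Substituting into the definition gives
\begin{equation}
\hat{S}_1^{(\infty)} = \bigl(\hat{\Psi}_1^{(\infty)}\bigr)^{-1}\hat{\Psi}_2^{(\infty)} = \varepsilon\bigl(\Psi_1^{(\infty)}\bigr)^{-1}\varepsilon\varepsilon\Psi_2^{(\infty)}\varepsilon = \varepsilon S_1^{(\infty)}\varepsilon. \nonumber
\end{equation}
As a consistency check, this result is again upper unitriangular and real. The argument is essentially routine. The only point needing care is the uniqueness step in Step 2: one must check that the conjugated formal series is exactly of the normalized form \(\hat{G}^{-1}(I+O(\mu^{-1}))e^{\mu x\overline{A}}\). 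This holds because \(\varepsilon\) is diagonal and hence commutes with both \(\overline{A}\) and the exponential factor.
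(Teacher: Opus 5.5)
Your proposal is correct and follows the paper's proof essentially step for step. You conjugate the linear system by \(\varepsilon\), use uniqueness to identify the formal solution and the canonical solutions as \(\varepsilon\Psi\varepsilon\), and read off \(\varepsilon S_1^{(\infty)}\varepsilon\); your extra checks (that \(\hat{G}\) still satisfies the reality conditions and the radial tt*-equation, and that the conjugated series has the normalized form \(\hat{G}^{-1}(I+O(\mu^{-1}))e^{\mu x\overline{A}}\)) only make explicit what the paper leaves implicit.
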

	\begin{proof}
		The corresponding linear equation is transformed from (\ref{A}) into
		\begin{align}
			\hat{\Psi}_{\mu} &= \left(-\mu^{-2}xA + \mu^{-1} \frac{x}{2} \hat{G}^{-1} \hat{G}_x + x \hat{G}^{-1} \overline{A} \hat{G} \right)\hat{\Psi} \nonumber\\
			&= \varepsilon\left(-\mu^{-2}xA + \mu^{-1} \frac{x}{2}G^{-1}G_x + x G^{-1} \overline{A}G \right)\varepsilon\hat{\Psi}. \label{B}
		\end{align}
		A formal solution \(\hat{\Psi}^{(\infty)}\) of (\ref{B}) at \(\mu = \infty\) is given by \(\hat{\Psi}^{(\infty)}(\mu) = \varepsilon\Psi^{(\infty)}(\mu)\varepsilon\). By uniqueness, we have
		\begin{equation}
			\hat{\Psi}_j^{(\infty)}(\mu) = \varepsilon \Psi_j^{(\infty)}(\mu)  \varepsilon \sim \hat{\Psi}^{(\infty)}(\mu) \ \ \ {\rm as}\ \ \mu \to \infty,\ \ \mu \in \Omega_j^{(\infty)}, \nonumber
		\end{equation}
		and hence the Stokes matrix is transformed into
		\begin{equation}
			\hat{S}_1^{(\infty)} = \hat{\Psi}_1^{(\infty)}(\mu)^{-1}\hat{\Psi}_2^{(\infty)}(\mu) = \varepsilon S_1^{(\infty)} \varepsilon. \nonumber
		\end{equation}
	\end{proof}
	
	Thus, the Stokes matrix is transformed by conjugation under a change of sign of the frame \(\tau\).
	\begin{Def}
		For \(\varepsilon \in \{\pm 1\}^n\), we define an automorphism \(\sigma_{\varepsilon}\) on the set of upper unitriangular matrices by \(\sigma_{\varepsilon}(S) = \varepsilon S \varepsilon\).
	\end{Def}
	
	Next, we consider the ambiguity (ii).

	\subsection{Ambiguity (ii)}
	We follow Section 3 of Cotti, Dubrovin, Guzzetti \cite{CDG2020}. If we change the coordinate by setting \(\tilde{t} = e^{-\sqrt{-1}\phi}t\), then we have
	\begin{equation}
		\Phi = \left(\begin{array}{ccc}
			u_1 & & \\
			& \ddots & \\
			& & u_n
		\end{array}\right)dt = \left(\begin{array}{ccc}
			e^{\sqrt{-1}\phi}u_1 & & \\
			& \ddots & \\
			& & e^{\sqrt{-1}\phi}u_n
		\end{array}\right)d\tilde{t}. \nonumber
	\end{equation}
	
	The ordering of \(e^{\sqrt{-1}\phi}u_1,\dots,e^{\sqrt{-1}\phi}u_n\) no longer satisfies (\ref{2}). Thus, we must reorder the frame by a permutation \(\rho \in \mathfrak{S}_n\) such that
	\begin{equation}
		{\rm Re}[\mu^{-1}e^{\sqrt{-1}\phi}u_{\rho(1)}] < \cdots < {\rm Re}[\mu^{-1}e^{\sqrt{-1}\phi}u_{\rho(n)}],\ \ \ \mu \in \Omega_1^{(\infty)} \cap \Omega_2^{(\infty)}. \nonumber
	\end{equation}
	
	Let \(P\) be the permutation matrix corresponding to \(\rho\) and we consider the frame \(\tilde{\tau} = \tau \cdot P\). The frame \(\tilde{\tau} = (\tilde{\tau}_1,\dots,\tilde{\tau}_n)\) satisfies \(\eta(\tilde{\tau}_i,\tilde{\tau}_j) = \delta_{ij}\), but \(\tilde{G} = (g(\tilde{\tau}_i,\tilde{\tau}_j))\) and \(\Phi(\tilde{\tau}) = \tilde{\tau} \cdot \tilde{A}d\tilde{t}\) are given by
	\begin{equation}
		\tilde{G} = P^tGP,\ \ \ \ \ \tilde{A} = P^tAPe^{\sqrt{-1}\phi}. \nonumber
	\end{equation}

	\begin{prop}
		For \(\theta_p < \phi \le \theta_{p+1}\ (p \ge 1)\), the Stokes factor and the Stokes matrix are given by
		\begin{align}
			\tilde{K}_j^{(p)} = P_{l_{p}}^t\tilde{K}_{j+1}^{(p-1)}P_{l_{p}},\ \ \ \ \ \tilde{S}_1^{(p)} = P_{l_{p}}^t(\tilde{K}_1^{(p-1)})^{-1}\tilde{S}_1^{(p-1)}(\tilde{K}_1^{(p-1)})^{-t}P_{l_{p}}, \nonumber
		\end{align}
		for some \(l_j\)'s, where \(\tilde{K}_j^{(0)} = K_j, \tilde{S}_1^{(0)} = S_1^{(\infty)}\).
	\end{prop}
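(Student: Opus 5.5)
I will argue by induction on \(p\). The step from \(p-1\) to \(p\) is the case where \(\phi\) crosses exactly one separating ray. As \(\phi\) increases from \(\theta_{p}-0\) to \(\theta_p+0\), rotating the coordinate \(\tilde t = e^{-\sqrt{-1}\phi}t\) rotates the spectral plane: the new equation in the variable \(\tilde\mu\) is the old one with \(\mu\) replaced by \(e^{\sqrt{-1}\phi}\tilde\mu\), up to the frame permutation \(P\). Indeed, \(\tilde A = P^tAPe^{\sqrt{-1}\phi}\), and the term \(x\overline{\tilde A}\) picks up \(e^{-\sqrt{-1}\phi}\). So the Stokes rays of the new system are the old rays rotated by \(\phi\), and the sectors \(\Omega_j\) are relabelled accordingly. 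The ray \(R_p\) (after the previous \(p-1\) shifts, the current first ray \(R_1\)) leaves the lower half plane and becomes the last ray \(R_{2m}\) of the new configuration. All other rays shift their index down by one.

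For the inductive step I first verify the statement for \(p=1\), that is, \(\theta_1<\phi\le\theta_2\), with \(\tilde K^{(0)}_j=K_j\). I will use the uniqueness of canonical solutions (Theorem 1.4 of \cite{FIKN2006}). The solution \(\tilde\Psi_j(\tilde\mu)\) of the rotated, reordered system is characterised by its asymptotics \(\sim\tilde\Psi^{(\infty)}\) in the rotated sector. Since the rotated sector \(\tilde\Omega_j\) coincides with \(\Omega_{j+1}\) up to the rotation, I obtain
\[
\tilde\Psi_j(\tilde\mu)=P^t\,\Psi_{j+1}(e^{\sqrt{-1}\phi}\tilde\mu)\,P .
\]
Here \(P=P_{l_1}\) is the permutation needed to restore the ordering (\ref{2}). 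Because only the pair of eigenvalues belonging to the crossed ray \(R_1=R_{l,l+1}\) (by the earlier lemma \(R_1\) is of this adjacent form) changes relative order, \(P_{l_1}\) is the transposition of \(l_1\) and \(l_1+1\). Taking quotients of consecutive solutions gives \(\tilde K^{(1)}_j=P_{l_1}^tK_{j+1}P_{l_1}\).

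For the Stokes matrix I use part (3) of the Stokes factor lemma, \(\tilde S^{(1)}_1=\tilde K^{(1)}_1\cdots\tilde K^{(1)}_m\). This equals \(P^t K_2\cdots K_{m+1}P\). Writing \(K_2\cdots K_m = K_1^{-1}S_1^{(\infty)}\) and \(K_{m+1}=K_1^{-t}\) (part (2) of the lemma) yields \(P^tK_1^{-1}S_1^{(\infty)}K_1^{-t}P\), which is the formula for \(p=1\). For general \(p\) I apply the same argument to the configuration obtained at step \(p-1\) in place of the original one. The argument only uses the structural lemma on Stokes factors, which holds for any configuration satisfying (PD) and the ordering (\ref{2}). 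At each crossing the permutation is an adjacent transposition \(P_{l_p}\), determined by which pair of eigenvalues the ray \(\tilde R^{(p-1)}_1\) separates.

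The main obstacle is the bookkeeping of the rotation. I need to check that the rotated canonical sectors \(\tilde\Omega_j\) really coincide with \(\Omega_{j+1}\) and that the branch of \(\arg\) is consistent. I also need to check that the formal solution \(\tilde\Psi^{(\infty)}\) of the rotated system matches \(P^t\Psi^{(\infty)}(e^{\sqrt{-1}\phi}\cdot)P\), including the normalising factor \(\tilde G^{-1}=P^tG^{-1}P\) and \(\tilde x\)-dependence, which is unchanged because \(|\tilde t|=|t|\). To do this I will track the exponential factors \(e^{\mu x(\overline{u}_l-\overline{u}_k)}\) in the definition of the Stokes factors, exactly as in the proof of part (1) of the lemma. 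This confirms which ray lies inside which intersection \(\tilde\Omega_j\cap\tilde\Omega_{j+1}\), and that \(\tilde K^{(p)}_j\) is again unitriangular of the form \(I+sE_{ab}\) with \(a<b\).
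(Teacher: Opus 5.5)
Your proposal follows the paper's proof essentially step for step. Both argue by induction on the number of separating rays crossed. Both use uniqueness of the canonical solutions to get $\tilde\Psi_j=P_{l_1}^t\Psi_{j+1}(\cdot)P_{l_1}$, hence the index shift $\tilde K^{(1)}_j=P_{l_1}^tK_{j+1}P_{l_1}$. Both obtain the Stokes matrix as $P^tK_2\cdots K_{m+1}P=P^tK_1^{-1}S_1^{(\infty)}K_1^{-t}P$; the paper writes this as $\tilde\Psi_1^{-1}\tilde\Psi_{m+1}$, which is the same telescoping. Your general step, reapplying the one-crossing argument to the configuration from step $p-1$, is the paper's splitting $\phi=\phi_1+\tilde\phi$.

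The one correction is a sign. Since $\tilde A=e^{\sqrt{-1}\phi}P^tAP$ and $\overline{\tilde A}=e^{-\sqrt{-1}\phi}P^t\overline{A}P$, the correct substitution is $\mu=e^{-\sqrt{-1}\phi}\tilde\mu$, i.e.\ $\tilde\Psi_j(\tilde\mu)=P^t\Psi_{j+1}(e^{-\sqrt{-1}\phi}\tilde\mu)P$. Only this sign is consistent with $\tilde\Omega_j=e^{\sqrt{-1}\phi}\Omega_{j+1}$ and with the rays rotating by $+\phi$, so that $R_1$ becomes the last ray, as you correctly describe.
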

	\begin{proof}
		The corresponding linear equation is transformed from (\ref{A}) into
		\begin{align}
			\tilde{\Psi}_{\mu} &= \left(-\mu^{-2}x\tilde{A} + \mu^{-1} \frac{x}{2} \tilde{G}^{-1} \tilde{G}_x + x \tilde{G}^{-1} \overline{\tilde{A}} \tilde{G} \right)\tilde{\Psi} \nonumber\\
			&= e^{-\sqrt{-1}\phi}P^t\left(-(\mu e^{-\sqrt{-1}\phi})^{-2}xA + (\mu e^{-\sqrt{-1}\phi})^{-1} \frac{x}{2}G^{-1}G_x + xG^{-1} \overline{A}G \right)P\tilde{\Psi}. \label{C}
		\end{align}
		A formal solution \(\tilde{\Psi}^{(\infty)}\) of (\ref{C}) at \(\mu = \infty\) is given by
		\begin{equation}
			\tilde{\Psi}^{(\infty)}(\mu) = P^t\Psi^{(\infty)}(e^{-\sqrt{-1}\phi}\mu)P. \nonumber
		\end{equation}
		
		We prove the statement by induction on the number of crossings of Stokes rays. When \(\theta_1 < \phi \le \theta_2\) and \(R_1 = R_{l_1,l_1+1}\), the permutation \(\rho\) corresponds to \(P_{l_1}\) and we have \(\mu_j = \tilde{u}_j^{(1)}d\tilde{t} = e^{\sqrt{-1}\phi}u_{\rho(j)}d\tilde{t}\).  Then, we have \(\tilde{R}_j^{(1)} = e^{\sqrt{-1}\phi}R_{j+1}\) and \(\tilde{\Omega}^{(1)}_j = e^{\sqrt{-1}\phi}\Omega_{j+1}\). By uniqueness, we have
		\begin{equation}
			\tilde{\Psi}_j^{(1)}(\mu) = P_{l_1}^t\Psi_{j+1}^{(\infty)}(e^{-\sqrt{-1}\phi}\mu)P_{l_1} \sim \tilde{\Psi}^{(\infty)}(\mu) \ \ \ {\rm as}\ \ \mu \to \infty,\ \ \mu \in \tilde{\Omega}_j^{(1)} \nonumber
		\end{equation}
		and then, the Stokes factor and the Stokes matrix are transformed into
		\begin{align}
			&\tilde{K}_j^{(1)} = \tilde{\Psi}_j^{(1)}(\mu)^{-1}\tilde{\Psi}_{j+1}^{(1)}(\mu) = P_{l_1}^tK_{j+1}P_{l_1}, \nonumber\\
			&\tilde{S}_1^{(1)} = \tilde{\Psi}_1^{(1)}(\mu)^{-1}\tilde{\Psi}_{m+1}^{(1)}(\mu) = P_{l_1}^tK_1^{-1}S_1^{(\infty)}(K_1^{-1})^tP_{l_1}. \nonumber
		\end{align}\vskip\baselineskip
		
		For \(p \ge 1\), suppose that when \(\theta_p < \phi \le \theta_{p+1}\) the Stokes factor and the Stokes matrix are transformed into
		\begin{align}
			&\tilde{K}_j^{(p)} = P_{l_p}^t\tilde{K}_{j+1}^{(p-1)}P_{l_p},\ \ \ \ \ \tilde{S}_1^{(p)} = P_{l_p}^t(\tilde{K}_1^{(p-1)})^{-1}\tilde{S}_1^{(p-1)}(\tilde{K}_1^{(p-1)})^{-t}P_{l_p}, \nonumber\\
			&\tilde{R}_j^{(p)} = e^{\sqrt{-1}\phi}R_{j+p},\ \ \ \ \ \tilde{\Omega}_j^{(p)} = e^{\sqrt{-1}\phi}\Omega_{j+p}, \nonumber
		\end{align}
		for some \(l_p\) and \(\mu_j = \tilde{u}_j^{(p)}d\tilde{t}\), where \(\tilde{K}_j^{(0)} = K_j, \tilde{S}_1^{(0)} = S_1^{(\infty)}\).\vskip\baselineskip
		
		When \(\theta_{p+1} < \phi \le \theta_{p+2}\), write \(\phi = \phi_1 + \tilde{\phi}\) such that \(0 < \phi_1\) and \(\theta_p < \tilde{\phi} \le \theta_{p+1}\). For \(\tilde{t}_1 = e^{\sqrt{-1}\tilde{\phi}}t\) we apply the induction hypothesis, we obtain the corresponding \(\tilde{K}_j^{(p)}, \tilde{S}_1^{(p)}, \tilde{R}_j^{(p)}
		, \tilde{\Omega}_j^{(p)}\) and \(\tilde{u}_j^{(p)}\). For \(\tilde{t} = e^{\sqrt{-1}\phi_1}\tilde{t}_1\), we apply the argument above, then we obtain
		\begin{align}
			\tilde{K}_j^{(p+1)} = P_{l_{p+1}}^t\tilde{K}_{j+1}^{(p)}P_{l_{p+1}},\ \ \ \ \ \tilde{S}_1^{(p+1)} = P_{l_{p+1}}^t(\tilde{K}_1^{(p)})^{-1}\tilde{S}_1^{(p)}(\tilde{K}_1^{(p)})^{-t}P_{l_{p+1}}, \nonumber
		\end{align}
		where \(l_{p+1}\) satisfies
		\begin{equation}
			\tilde{R}_1^{(p)} = \left\{\mu \in \mathbb{C}^*\ | \ {\rm arg}(\mu) = {\rm arg}(e^{\sqrt{-1}\phi_1}\tilde{u}_{l_{p+1}}^{(p)} - e^{\sqrt{-1}\phi_1}\tilde{u}_{l_{p+1}+1}^{(p)}) - \frac{\pi}{2}\right\}. \nonumber
		\end{equation}
	\end{proof}
	
	\begin{Def}
		For \(l = 1,\dots,n-1\), define an automorphism \(\sigma_l\) on the set of upper unitriangular matrices by
		\begin{equation}
			\sigma_l(S) = B_l(S)SB_l(S), \nonumber
		\end{equation}
		where
		\begin{align}
			B_l(S) &= I_{l-1} \oplus \left(\begin{array}{cc}
				0 & 1 \\
				1 & -(S)_{l,l+1}
			\end{array}\right) \oplus I_{n - l - 1} \nonumber\\
			&= \left(\begin{array}{cccccc}
				1 & & & & & \\
				& \ddots & & & & \\
				& & 0 & 1 & & \\
				& & 1 & -(S)_{l,l+1} & & \\
				& & & & \ddots & \\
				& & & & & 1
			\end{array}\right), \nonumber
		\end{align}
	\end{Def}
	
	The transformations \(\tilde{S}_1^{(p)}\ (p = 1,2,\cdots)\) of the Stokes matrices are described by \(\sigma_1, \dots,\sigma_{n-1}\) as follows.
	\begin{cor}
		For all \(p\), we have \(\tilde{S}_1^{(p)} = \sigma_{l_p}(\tilde{S}_1^{(p-1)}) = (\sigma_{l_p} \circ \cdots \circ \sigma_{l_1})(S_1^{(\infty)})\).
	\end{cor}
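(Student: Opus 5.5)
The plan is to prove the first equality \(\tilde{S}_1^{(p)} = \sigma_{l_p}(\tilde{S}_1^{(p-1)})\) for each \(p\) directly from the preceding Proposition, and then get the composition formula by induction on \(p\), using \(\tilde{S}_1^{(0)} = S_1^{(\infty)}\). Write \(S = \tilde{S}_1^{(p-1)}\), \(K = \tilde{K}_1^{(p-1)}\) and \(l = l_p\). By the Proposition,
\begin{equation}
\tilde{S}_1^{(p)} = P_l^t K^{-1} S K^{-t} P_l. \nonumber
\end{equation}
Here I take \(P_l\) to be the permutation matrix of the transposition \((l,l+1)\). Then \(P_l^t = P_l\) is symmetric.

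The first step is to identify \(K\). At stage \(p-1\) the first separating ray \(\tilde{R}_1^{(p-1)}\) is of the form \(\tilde{R}_{l,l+1}\). This follows from part (2) of the lemma on Stokes rays, applied to the reordered data, and the index \(l_{p}\) is defined by exactly this property in the proof of the Proposition. So the lemma on Stokes factors gives
\begin{equation}
K = I_n + s E_{l,l+1}, \qquad s \in \mathbb{R}. \nonumber
\end{equation}

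The second step, which I expect to be the only real point, is to show that \(s = (S)_{l,l+1}\). Apply part (3) of that lemma to the stage-\((p-1)\) data: \(S = \tilde{K}_1^{(p-1)} \cdots \tilde{K}_m^{(p-1)}\), where each factor is \(I + s_j E_{a_j b_j}\) with \(a_j < b_j\). Expanding the product, every nonlinear term is a product \(E_{ab}E_{bc}\cdots\), which is supported at an entry \((a,c)\) with \(c \ge a+2\). Therefore the \((l,l+1)\) entry of \(S\) is the sum of the \(s_j\) over those factors with \((a_j,b_j) = (l,l+1)\). By (PD), the rays \(R_{jl}\) are pairwise distinct. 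Hence among \(\tilde{R}_1^{(p-1)},\dots,\tilde{R}_m^{(p-1)}\) only \(\tilde{R}_1^{(p-1)}\) equals \(\tilde{R}_{l,l+1}\). It follows that \((S)_{l,l+1} = s\). (I am assuming here that (PD) persists after the rotation and reordering, which is clear because these only rotate and relabel the rays.)

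The third step is a direct block computation in the \(\{l,l+1\}\) block:
\begin{equation}
P_l K^{-1} = \left(\begin{array}{cc} 0 & 1 \\ 1 & 0 \end{array}\right)\left(\begin{array}{cc} 1 & -s \\ 0 & 1 \end{array}\right) = \left(\begin{array}{cc} 0 & 1 \\ 1 & -s \end{array}\right), \nonumber
\end{equation}
with the identity outside this block. So \(P_l^t K^{-1} = B_l(S)\). Since \(B_l(S)\) is symmetric, we also get
\begin{equation}
K^{-t} P_l = (P_l K^{-1})^t = B_l(S)^t = B_l(S). \nonumber
\end{equation}
Substituting gives \(\tilde{S}_1^{(p)} = B_l(S)\, S\, B_l(S) = \sigma_{l_p}(\tilde{S}_1^{(p-1)})\). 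Iterating this from \(p = 1\) yields \(\tilde{S}_1^{(p)} = (\sigma_{l_p} \circ \cdots \circ \sigma_{l_1})(S_1^{(\infty)})\).
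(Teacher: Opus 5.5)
Your proposal is correct and follows the paper's own route: the paper's proof rests on the same two facts, namely \(P_{l_p}(\tilde{K}_1^{(p-1)})^{-1} = B_{l_p}(\tilde{K}_1^{(p-1)})\) and that the \((l_p,l_p+1)\)-entry of \(\tilde{S}_1^{(p-1)} = \tilde{K}_1^{(p-1)}\cdots\tilde{K}_m^{(p-1)}\) equals that of \(\tilde{K}_1^{(p-1)}\). Your product-expansion argument using (PD), and your use of the symmetry of \(B_l\) to handle the right-hand factor \((\tilde{K}_1^{(p-1)})^{-t}P_{l_p}\), spell out details the paper leaves implicit. You also use the correct stage index \(p-1\), where the paper's displayed identity writes \(p\).
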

	\begin{proof}
		It follows from \(B_{l_p}(\tilde{K}_1^{(p-1)}) = P_{l_p}(\tilde{K}_1^{(p-1)})^{-1}\) and
		\begin{equation}
			(\tilde{S}_1^{(p)})_{l_p} = (\tilde{K}_1^{(p)} \cdots \tilde{K}_m^{(p)})_{l_p} = (\tilde{K}_1^{(p)})_{l_p}. \nonumber
		\end{equation}
	\end{proof}
	
	We obtain the following proposition.
	
	\begin{prop}
		We have
		\begin{itemize}
			\item [(1)] \(l_1,\dots,l_{2m}\) are completely determined by \(\Phi\)
			
			\item [(2)] \(\sigma_{l_p} \circ \sigma_{\varepsilon} = \sigma_{P_{l_p}\varepsilon P_{l_p}} \circ \sigma_{l_p}\) for all \(\varepsilon \in \{\pm 1\}^n\) and \(l_p \in \{1,\dots,n-1\}\)
			
			\item [(3)] \((\sigma_{l_1} \circ \cdots \circ \sigma_{l_{2m}})(S_1^{(\infty)}) = S_1^{(\infty)}\).
		\end{itemize}
	\end{prop}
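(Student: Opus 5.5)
Each part of the Proposition is handled by a different mechanism: (1) follows from the geometry of the Stokes rays, (2) is a direct matrix computation, and (3) follows from rotating the coordinate by a full turn.

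For (1), the integers \(l_p\) are read off from the rotation argument in the proof of the previous Proposition. At the \(p\)-th crossing, \(l_p\) is defined by the condition \(\tilde{R}_1^{(p-1)} = R_{l_p,l_p+1}\) for the current ordering \(\tilde{u}^{(p-1)}\). By the earlier Lemma, the first separating ray is always of the form \(R_{j,j+1}\), so \(l_p\) is well defined. This ordering is obtained from \(u_1,\dots,u_n\) by the permutations \(P_{l_1},\dots,P_{l_{p-1}}\), and the rays themselves are \(\tilde{R}_j^{(p)} = e^{\sqrt{-1}\phi}R_{j+p}\). Hence, by induction on \(p\), each \(l_p\) depends only on the configuration of \(u_1,\dots,u_n\), that is, only on the eigenvalues of \(\Phi\). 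In particular it does not depend on \(G\) or on the Stokes multipliers.

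For (2), I would compute directly. Since \(\sigma_\varepsilon(S)_{l,l+1} = \varepsilon_l\varepsilon_{l+1}S_{l,l+1}\), the block of \(B_l(\varepsilon S\varepsilon)\) is
\[
\left(\begin{array}{cc} 0 & 1 \\ 1 & -\varepsilon_l\varepsilon_{l+1}S_{l,l+1}\end{array}\right).
\]
One checks that this equals \(\varepsilon' B_l(S)\varepsilon\) on the \(2\times 2\) block, where \(\varepsilon' = P_l\varepsilon P_l\), which swaps \(\varepsilon_l\) and \(\varepsilon_{l+1}\). The check uses \(\varepsilon_j^2=1\) entry by entry. Then
\[
\sigma_l(\sigma_\varepsilon S) = \varepsilon' B_l(S)\,\varepsilon\varepsilon\, S\,\varepsilon\varepsilon\, B_l(S)\varepsilon' = \sigma_{\varepsilon'}(\sigma_l(S)).
\]
The only delicate point is the asymmetric placement of \(\varepsilon\) and \(\varepsilon'\) on the left and right factors of \(B_l\). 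This works because \(B_l(S)\) is symmetric, and conjugating it by \(\mathrm{diag}(\varepsilon_l,\varepsilon_{l+1})\) on one side and by its swap on the other is consistent.

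For (3), I would take \(\phi = 2\pi\), or more precisely \(\phi\) just past \(\theta_{2m} + \) a full revolution, so that all \(2m\) rays have been crossed exactly once. Then \(\tilde{t} = t\), \(\tilde{A} = A\), and the composite permutation \(P_{l_1}\cdots P_{l_{2m}}\) returns the ordering to the original one, since the ordering condition (\ref{2}) singles out a unique ordering. The resulting frame is therefore \(\tau\) up to signs. By the previous Corollary, the Stokes matrix obtained after \(2m\) crossings is the composition of the \(\sigma_{l_p}\), applied in the order dictated by that Corollary. On the other hand, it is also the Stokes matrix of the same tt*-structure in the same frame, via \(\tilde{\Psi}_j^{(2m)}(\mu) = \Psi_{j+2m}(e^{-2\pi\sqrt{-1}}\mu) = \Psi_j(\mu)\) by the periodicity \(\Psi_{k+2}^{(\infty)}(e^{-2\sqrt{-1}\pi}\mu)=\Psi_k^{(\infty)}(\mu)\). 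This identifies it with \(S_1^{(\infty)}\).

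\textbf{Main obstacle.} The main difficulty is verifying that no residual sign twist \(\sigma_\varepsilon\) appears in (3). The factors \(B_l = P_l K^{-1}\) come from genuine analytic continuation with no sign choices, so the total transformation is an honest monodromy by \(2\pi\), which is trivial on \(\mathbb{C}^*\). I would also reconcile the composition order \(\sigma_{l_1}\circ\cdots\circ\sigma_{l_{2m}}\) in the statement with \(\sigma_{l_{2m}}\circ\cdots\circ\sigma_{l_1}\) from the Corollary. This should follow from the fact that the sequence \(l_1,\dots,l_{2m}\) is periodic and symmetric: \(l_{m+j}=l_j\), since \(\tilde R^{(m)}\) is a rotation of \(R\) by \(\pi\).
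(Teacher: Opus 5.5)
Your argument for all three parts is the paper's. The paper calls (1) and (2) straightforward. Your induction on the ray ordering, and your block identity \(B_l(\varepsilon S\varepsilon)=\varepsilon' B_l(S)\varepsilon\) with \(\varepsilon'=P_l\varepsilon P_l\) combined with the symmetry of \(B_l\), are correct ways to fill them in. For (3) the paper uses exactly your observation: for \(\theta_{2m}<\phi\le\theta_1+2\pi\) (and \(\phi=2\pi\) lies in this window) the accumulated permutation is the identity, so the frame, and hence the Stokes matrix, is unchanged. Since the frame change \(P_{l_1}\cdots P_{l_{2m}}=I\) is a pure permutation, no sign twist can arise.

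The one step that fails is your reconciliation of the composition order.

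\textbf{The index relation is wrong.} It is not true that \(l_{m+j}=l_j\). A half-turn reverses the ordering of the rotated eigenvalues. The ray \(R_{m+j}=-R_j\) swaps the same unordered pair as \(R_j\), but that pair now sits at positions \(n-l_j,\,n-l_j+1\). Hence \(l_{m+j}=n-l_j\); for \(n=3\) one gets the sequence \(1,2,1,2,1,2\).

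\textbf{Indices alone cannot reverse a composition.} Even a correct relation among the indices does not turn \(\sigma_{l_{2m}}\circ\cdots\circ\sigma_{l_1}\) into \(\sigma_{l_1}\circ\cdots\circ\sigma_{l_{2m}}\). What your argument actually proves, like the paper's via the Corollary, is \((\sigma_{l_{2m}}\circ\cdots\circ\sigma_{l_1})(S_1^{(\infty)})=S_1^{(\infty)}\). The order written in the statement is best read that way.

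\textbf{What the literal order would need.} If you insist on the order as written, you need three extra inputs:
\begin{enumerate}
\item The \(\sigma_l\) satisfy the braid relations. This is Dubrovin's action, and it can be checked directly on \(3\times 3\) blocks.
\item Each half \(l_1\cdots l_m\) and \(l_{m+1}\cdots l_{2m}\) is a reduced word for the longest permutation, so each represents the half twist \(\Delta\).
\item The word-reversal anti-automorphism of the braid group fixes \(\Delta\).
\end{enumerate}
Then both orders represent the same element \(\Delta^2\) and act identically on upper unitriangular matrices.
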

	\begin{proof}
		(1) and (2) are straightforward. (3) When \(\theta_{2m} < \phi \le \theta_1 + 2\pi\), the corresponding permutation is the identity and hence the resulting Stokes matrix remains unchanged.
	\end{proof}
	
	Thus, by ambiguity (ii) the Stokes matrix can take \(2m = n(n-1)\) upper unitriangular matrices
	\begin{equation}
		(\sigma_{l_p} \circ \cdots \circ \sigma_{l_1})(S_1^{(\infty)}) \in {\rm SL}_n \mathbb{R},\ \ \ \ \ p = 1,\dots,2m. \nonumber
	\end{equation}

	\subsection{The equivalence relation on tt*-structures over \(\mathbb{C}^*\)}
	Taking into account ambiguities (i), (ii), for a tt*-structure \((E,\eta,g,\Phi)\) over \(\mathbb{C}^*\) satisfying (DB), (R) and (PD), there exist \(n(n-1)2^{n-1}\) upper unitriangular matrices of the form
	\begin{equation}
		\sigma_{\varepsilon} \circ (\sigma_{l_p} \circ \cdots \circ \sigma_{l_1}) \in {\rm SL}_n \mathbb{R},\ \ \ \ \ \varepsilon \in \{\pm 1\}^n,\ p = 1,\dots,n(n-1). \nonumber
	\end{equation}
	
	We define the Stokes data of a tt*-structure as follows.
	\begin{Def}
		Let \((E,\eta,g,\Phi)\) be a tt*-structure over \(\mathbb{C}^*\) satisfying (DB), (R) and (PD). We define the Stokes data of \((E,\eta,g,\Phi)\) by
		\begin{equation}
			\mathscr{S}(E,\eta,g,\Phi) = \left\{\sigma_{\varepsilon} \circ (\sigma_{l_p} \circ \cdots \circ \sigma_{l_1})\ | \ \varepsilon \in \{\pm 1\}^n,\ p = 1,\dots,n(n-1)\right\}. \nonumber
		\end{equation}
	\end{Def}\vskip\baselineskip
	
	Given a tt*-structure \((E,\eta,g,\Phi)\), the Stokes data \(\mathscr{S}(E,\eta,g,\Phi)\) is uniquely determined. The ambiguities (i) and (ii) induce a natural group of transformations.
	
	\begin{Def}
		Let \(\tilde{Br}_n\) be the group generated by \(\sigma_1,\dots,\sigma_{n-1}\) and \(\sigma_{\varepsilon}\ (\varepsilon \in \{\pm 1\}^n)\).
	\end{Def}
	
	We consider an action of \(\tilde{Br}_n\) on the set of upper unitriangular matrices.
	
	\begin{lem}
		If \(S, \tilde{S} \in \mathscr{S}(E,\eta,g,\Phi)\), then there exists \(\sigma \in \tilde{Br}_n\) such that \(\tilde{S} = \sigma(S)\).
	\end{lem}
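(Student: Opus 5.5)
By the definition of \(\mathscr{S}(E,\eta,g,\Phi)\), there are \(\varepsilon,\varepsilon' \in \{\pm 1\}^n\) and \(p,q \in \{1,\dots,n(n-1)\}\) such that
\begin{equation}
S = \sigma_{\varepsilon}\circ(\sigma_{l_p}\circ\cdots\circ\sigma_{l_1})(S_1^{(\infty)}),\ \ \ \ \ \tilde{S} = \sigma_{\varepsilon'}\circ(\sigma_{l_q}\circ\cdots\circ\sigma_{l_1})(S_1^{(\infty)}). \nonumber
\end{equation}
The plan is to pass through \(S_1^{(\infty)}\): first express \(S_1^{(\infty)}\) as \(\tau(S)\) with \(\tau \in \tilde{Br}_n\), and then set \(\sigma = \sigma_{\varepsilon'}\circ\sigma_{l_q}\circ\cdots\circ\sigma_{l_1}\circ\tau\). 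This \(\sigma\) lies in \(\tilde{Br}_n\), since \(\tilde{Br}_n\) is generated by the \(\sigma_l\) and the \(\sigma_\varepsilon\).

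To recover \(S_1^{(\infty)}\) from \(S\), first remove the sign twist. Since \(\varepsilon^2 = I\), we have \(\sigma_\varepsilon\circ\sigma_\varepsilon = {\rm id}\), so
\begin{equation}
\sigma_\varepsilon(S) = (\sigma_{l_p}\circ\cdots\circ\sigma_{l_1})(S_1^{(\infty)}). \nonumber
\end{equation}
Next, I would avoid inverting the \(\sigma_l\) individually. Instead, use part (3) of the preceding proposition, which says the full cycle \((\sigma_{l_1}\circ\cdots\circ\sigma_{l_{2m}})(S_1^{(\infty)}) = S_1^{(\infty)}\). Applied in the order of composition used in the Corollary (crossing all \(2m\) Stokes rays returns to the original configuration), this reads \((\sigma_{l_{2m}}\circ\cdots\circ\sigma_{l_1})(S_1^{(\infty)}) = S_1^{(\infty)}\). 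Applying the remaining rotations \(\sigma_{l_{p+1}},\dots,\sigma_{l_{2m}}\) to \(\sigma_\varepsilon(S)\) therefore gives
\begin{equation}
(\sigma_{l_{2m}}\circ\cdots\circ\sigma_{l_{p+1}}\circ\sigma_\varepsilon)(S) = S_1^{(\infty)}, \nonumber
\end{equation}
so \(\tau = \sigma_{l_{2m}}\circ\cdots\circ\sigma_{l_{p+1}}\circ\sigma_\varepsilon\) works. This avoids any question of whether the \(\sigma_l\) are invertible on all upper unitriangular matrices, although they are: \(B_l(S)\) is an involution-type matrix and \(\sigma_l\) has an explicit inverse. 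The case \(p = 2m\) is trivial, with \(\tau = \sigma_\varepsilon\).

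The main obstacle is the bookkeeping in the order of composition. The proposition writes the cycle as \(\sigma_{l_1}\circ\cdots\circ\sigma_{l_{2m}}\), while the Corollary composes \(\sigma_{l_p}\circ\cdots\circ\sigma_{l_1}\). One must check that the geometric statement (rotating \(\phi\) from \(0\) past \(\theta_1,\dots,\theta_{2m}\) to \(\theta_1+2\pi\) returns the same ordered frame) gives the identity in the Corollary's ordering. I would justify this directly from the inductive construction: at \(\phi\) slightly beyond \(\theta_{2m}\), the permutation is trivial and \(\tilde{\Omega}_j^{(2m)} = \Omega_{j+2m}\) corresponds to \(\Omega_j\) up to the \(2\pi\) shift, which is handled by \(\Psi_{k+2}^{(\infty)}(e^{-2\sqrt{-1}\pi}\mu) = \Psi_k^{(\infty)}(\mu)\). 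If one prefers to avoid this, the alternative is to show each \(\sigma_l\) is a bijection with inverse in \(\tilde{Br}_n\). Every element of \(\tilde{Br}_n\) is a composition of the generators, and \(\tilde{Br}_n\) is a group, so each \(\sigma_l^{-1}\) lies in it. Then simply take \(\sigma = \sigma_{\varepsilon'}\circ\sigma_{l_q}\circ\cdots\circ\sigma_{l_1}\circ\sigma_{l_1}^{-1}\circ\cdots\circ\sigma_{l_p}^{-1}\circ\sigma_\varepsilon\).
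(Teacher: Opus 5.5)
Your proof is correct. It differs from the paper's mainly in how it gets back to \(S_1^{(\infty)}\).

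The paper's proof is the one-liner ``it follows from the definition''. Implicitly, it writes \(S = g(S_1^{(\infty)})\) and \(\tilde S = h(S_1^{(\infty)})\) with \(g,h\) words in the generators, and concludes \(\tilde S = (h\circ g^{-1})(S)\) with \(h\circ g^{-1}\in\tilde{Br}_n\) because \(\tilde{Br}_n\) is a group. That is exactly your fallback argument.

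Your main route instead returns to \(S_1^{(\infty)}\) by completing the full cycle of \(2m\) ray crossings, so it uses part (3) of the preceding proposition in place of inverting \(g\). You are right that the order consistent with the Corollary, and with the proof of (3) (``for \(\theta_{2m}<\phi\le\theta_1+2\pi\) the permutation is the identity''), is \((\sigma_{l_{2m}}\circ\cdots\circ\sigma_{l_1})(S_1^{(\infty)})=S_1^{(\infty)}\); the printed order is reversed. You also correctly use this identity only evaluated at \(S_1^{(\infty)}\), not as an identity of maps.

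What your route buys is a \(\sigma\) written as a positive word, with no inverses of the \(\sigma_l\). So \(\tilde S\) is reached from \(S\) by forward rotations and sign changes alone. What it costs is dependence on the full-rotation identity (3). The paper's route is purely formal once the \(\sigma_l\) are known to be bijections, which the paper builds in by calling them automorphisms.

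One small correction to your parenthetical: \(B_l(S)\) is not an involution. With \(s=(S)_{l,l+1}\), the relevant block of \(B_l(S)^2\) is \(\begin{pmatrix}1 & -s\\ -s & 1+s^2\end{pmatrix}\). Since \((\sigma_l(S))_{l,l+1}=-(S)_{l,l+1}\), the inverse of \(\sigma_l\) is \(S'\mapsto C S' C\), where \(C = I_{l-1}\oplus\begin{pmatrix}-(S')_{l,l+1} & 1\\ 1 & 0\end{pmatrix}\oplus I_{n-l-1}\).
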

	\begin{proof}
		It follows from the definition of \(\mathscr{S}(E,\eta,g,\Phi)\).
	\end{proof}
	
	The converse does not hold in general: two matrices lying in the same \(\tilde{Br}_n\)-orbit do not necessarily belong to the same Stokes data set. However, if two Stokes matrices lie in the same orbit, then their corresponding Stokes data have non-empty intersection. This observation motivates the following equivalence relation.
	
	\begin{Def}
		Let \((E,\eta,g,\Phi)\) and \((\tilde{E},\tilde{\eta},\tilde{g},\tilde{\Phi})\) be tt*-structures over \(\mathbb{C}^*\) satisfying (DB), (R) and (PD). We define an equivalence relation
		\begin{equation}
			(E,\eta,g,\Phi) \sim (\tilde{E},\tilde{\eta},\tilde{g},\tilde{\Phi}) \nonumber
		\end{equation}
		if there exists a finite sequence of tt*-structures \(\{(E_j,\eta_j,g_j,\Phi_j)\}_{j=0}^{r}\) over \(\mathbb{C}^*\) satisfying (DB), (R) such that
		\begin{equation}
			(E_0,\eta_0,g_0,\Phi_0) = (E,\eta,g,\Phi),\ \ \ (E_r,\eta_r,g_r,\Phi_r) = (\tilde{E},\tilde{\eta},\tilde{g},\tilde{\Phi}). \nonumber
		\end{equation}
		and
		\begin{equation}
			\mathscr{S}(E_j,\eta_j,g_j,\Phi_j) \cap \mathscr{S}(E_{j+1},\eta_{j+1},g_{j+1},\Phi_{j+1}) \neq \emptyset\ \ \ (j=0,\dots,r-1). \nonumber
		\end{equation}
		
		We denote the equivalence class of \((E,\eta,g,\Phi)\) by \([(E,\eta,g,\Phi)]\).
	\end{Def}

	The equivalence relation \(\sim\) admits a description in terms of \(\tilde{Br}_n\)-action.
	\begin{prop}
		Let \((E,\eta,g,\Phi)\) and \((\tilde{E},\tilde{\eta},\tilde{g},\tilde{\Phi})\) be tt*-structures over \(\mathbb{C}^*\) satisfying (DB), (R), (PD) and \(S \in \mathscr{S}(E,\eta,g,\Phi),\tilde{S} \in \mathscr{S}(\tilde{E},\tilde{\eta},\tilde{g},\tilde{\Phi})\). Then
		\begin{equation}
			(E,\eta,g,\Phi) \sim (\tilde{E},\tilde{\eta},\tilde{g},\tilde{\Phi}) \nonumber
		\end{equation}
		if and only if
		\begin{equation}
			\exists \sigma \in \tilde{Br}_n\ \ \ {\rm s.t.}\ \ \tilde{S} = \sigma(S). \nonumber
		\end{equation}
	\end{prop}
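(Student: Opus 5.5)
The equivalence is proved in two directions, using the Lemma preceding the definition of \(\sim\) and the explicit form of the Stokes data sets.

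\emph{Forward direction.} Suppose \((E,\eta,g,\Phi) \sim (\tilde{E},\tilde{\eta},\tilde{g},\tilde{\Phi})\) through a chain \(\{(E_j,\eta_j,g_j,\Phi_j)\}_{j=0}^r\). For each \(j\) pick \(T_j \in \mathscr{S}(E_j,\eta_j,g_j,\Phi_j) \cap \mathscr{S}(E_{j+1},\eta_{j+1},g_{j+1},\Phi_{j+1})\). Within a single Stokes data set any two elements are related by some element of \(\tilde{Br}_n\), by the Lemma above. Hence there are \(\tau_0 \in \tilde{Br}_n\) with \(T_0 = \tau_0(S)\), and \(\tau_j \in \tilde{Br}_n\) with \(T_j = \tau_j(T_{j-1})\) for \(1\le j\le r-1\), since \(T_{j-1}, T_j\) both lie in \(\mathscr{S}(E_j,\dots)\). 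Finally there is \(\tau_r\) with \(\tilde{S} = \tau_r(T_{r-1})\). The composite \(\sigma=\tau_r\circ\cdots\circ\tau_0\) is in \(\tilde{Br}_n\), since it is a group, and satisfies \(\tilde{S}=\sigma(S)\). Only one point needs care: the Lemma must apply to chain members satisfying only (DB), (R). I would note that this is harmless, or read the chain as also satisfying (PD), as the definition of \(\mathscr{S}\) requires.

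\emph{Converse.} Suppose \(\tilde{S}=\sigma(S)\), and write \(\sigma\) as a word in the generators \(\sigma_l^{\pm1}\) and \(\sigma_\varepsilon\). Using Proposition (2), \(\sigma_{l}\circ\sigma_\varepsilon=\sigma_{P_l\varepsilon P_l}\circ\sigma_l\), I move all sign changes to the front, so that \(\sigma=\sigma_\varepsilon\circ\sigma_{k_q}^{\pm1}\circ\cdots\circ\sigma_{k_1}^{\pm1}\). Since \(\mathscr{S}\) is closed under \(\sigma_\varepsilon\), it suffices to treat a single braid generator and then induct on the word length, building a chain of tt*-structures.

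\emph{Single generator.} Let \(S'=\sigma_l(S)\). I choose \(u'_1,\dots,u'_n\) whose ordering and first Stokes ray give \(R_1=R_{l,l+1}\), with (PD) holding. I take the tt*-structure \(X'\) with data \((u',S)\), constructed via (RH) and the Corollary of Section 2. Rotating the coordinate past \(\theta_1\) shows, by the Corollary to the ambiguity (ii) Proposition, that \(S'=\sigma_l(S)\in\mathscr{S}(X')\), while also \(S\in\mathscr{S}(X')\). The original structure has \(S\) in its Stokes data, so it is chained to \(X'\) via \(S\). The next structure in the chain is then linked to \(X'\) via \(S'\). For \(\sigma_l^{-1}\) I use the ray \(R_{2m}\) instead, i.e. a rotation in the opposite direction.

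\emph{Main obstacle.} The hard step is existence: the intermediate structures \(X'\) with prescribed \(u'\) and Stokes matrix must exist, which requires solvability of (RH) for \(S\) with arbitrary \(u'\). This is not automatic, and it is exactly the hypothesis that Theorem A later isolates. Here I would argue only with the structures actually produced by rotating the given ones. The rotations of \((E,\eta,g,\Phi)\) realize every \(\sigma_{l_p}\) dictated by \(\Phi\), and when \(\Phi\) and \(\tilde{\Phi}\) share eigenvalue configurations this suffices. If it does not, I would reduce to the solvability assumption, interpreting the proposition as holding whenever the relevant (RH) problems are solvable.
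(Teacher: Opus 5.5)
Your forward direction is correct, and it is exactly what the paper's proof (a citation of Definitions 3.3 and 3.5) amounts to: apply Lemma 3.1 inside each Stokes data set along the chain and compose in \(\tilde{Br}_n\). Your caveat that \(\mathscr{S}\) needs (PD) for the chain members is apt.

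The gap is in the converse. Your single-generator step is the right mechanism: take a structure \(X'\) with a configuration \(u'\) for which \(R_1=R_{l,l+1}\) and Stokes matrix \(S\), then rotate past \(\theta_1\). This is the same device the paper uses in the proof of Theorem A. But it requires \(X'\) to exist, i.e. (RH) with jump matrix \(S\) at \(u'\) must be solvable for every \(x>0\). The existence of \((E,\eta,g,\Phi)\) gives this only at the original configuration and its rotations, with correspondingly transformed Stokes matrices.

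Your fallback does not repair this. Rotating a structure never produces anything outside its own Stokes data, because the rotation word \(l_1,\dots,l_{2m}\) is fixed by \(\Phi\). So even when \(\Phi=\tilde{\Phi}\), the given structures and their rotations are linked only if \(\tilde{S}\) lies in the finite set \(\{\sigma_{\varepsilon'}\circ w_q^{-1}\circ\sigma_{\varepsilon}\circ w_p(S)\}\), where \(w_p=\sigma_{l_p}\circ\cdots\circ\sigma_{l_1}\). On the other hand, \(\tilde{Br}_n\)-orbits can be infinite. For \(n=3\), \(\sigma_1\) acts on \((s_{12},s_{13},s_{23})\) by \((x,y,z)\mapsto(-x,z,y-xz)\), and iterating \(\sigma_1^2\) on \((3,3,3)\) gives \((3,-6,-15)\), \((3,39,102)\), \(\dots\). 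Hence intermediate structures carrying new Stokes matrices are unavoidable. Restricting to ``whenever the relevant (RH) problems are solvable'' proves a weaker statement than the one given.

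You will not find the missing step in the paper's proof either. Treating the converse as a consequence of the definitions amounts to the unproved remark before Definition 3.5 that matrices in one orbit have intersecting Stokes data, and the finiteness observation above shows this is not automatic. What would close the gap is an existence input of the kind assumed in Theorem A: every matrix along a word for \(\sigma\) must give a global solution of (RH) at some configuration satisfying (PD) with the required first Stokes ray. For \(ADE\)-type orbits, the positivity lemma for \(S_{A_n},\dots,S_{E_8}\) combined with Theorem A is meant to supply this for all configurations, and then your chain construction goes through as written. In general, the converse should carry such a solvability hypothesis.
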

	\begin{proof}
		It follows from Definition 3.3 and Definition 3.5.
	\end{proof}
	
	\begin{rem}
		If \((E,\eta,g,\Phi) \sim (\tilde{E},\tilde{\eta},\tilde{g},\tilde{\Phi})\), then \(S(S^{-1})^t\) and \((\tilde{S}\tilde{S}^{-1})^t\) have the same eigenvalues. In physics literature, these eigenvalues are referred to in the physics literature as the \({\rm U}(1)\) charges of the Ramond ground states.
	\end{rem}\vskip\baselineskip
	
	Let \(S\) be an upper unitriangular matrix, a diagonal matrix \(A\), and assume that for every \(x>0\) the Riemann-Hilbert problem (RH) admits a solution \(Y\). Define
	\begin{equation}
		E = \mathbb{C}^* \times \mathbb{C}^n,\ \ \ \eta = I_n, \ \ \ g = Y(0),\ \ \ \Phi = Adt\ (t \in \mathbb{C}^*), \nonumber
	\end{equation}
	then, \((E,\eta,g,\Phi)\) is a tt*-structure over \(\mathbb{C}^*\) satisfying (DB), (R). Since the solution \(Y\) to (RH) is uniquely determined by \(A \) and \(S\), the corresponding tt*-structure \((\mathbb{C}^* \times \mathbb{C}^n,I_n,Y(0),Adt)\) is uniquely determined by \(A, S\), and the choice of coordinate \(t\). If the coordinate on \(\mathbb{C}^*\) is not fixed, the resulting tt*-structure is not unique. However, its Stokes data is uniquely determined.
	
	\begin{lem}
		For any two coordinates \(t,\tilde{t}\) on \(\mathbb{C}^*\),
		\begin{equation}
			\mathscr{S}(\mathbb{C}^* \times \mathbb{C}^n,I_n,Y(0),Adt) \cap \mathscr{S}(\mathbb{C}^* \times \mathbb{C}^n,I_n,Y(0),Ad\tilde{t}) \neq \emptyset. \nonumber
		\end{equation}
	\end{lem}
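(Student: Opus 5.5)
The lemma compares the tt*-structure \((\mathbb{C}^* \times \mathbb{C}^n,I_n,Y(0),Adt)\) with the same data written in a rotated coordinate \(\tilde{t} = e^{-\sqrt{-1}\phi}t\). Any two coordinates on \(\mathbb{C}^*\) relevant here differ by such a rotation, so we fix \(\phi \in [0,2\pi)\). The plan is to exhibit one upper unitriangular matrix lying in both Stokes data sets; a natural candidate is \(S\) itself.

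First, by the construction of Section 2.4, the Stokes matrix of the structure in coordinate \(t\) (with frame \(\tau\)) is \(S\). Hence \(S \in \mathscr{S}(\mathbb{C}^* \times \mathbb{C}^n,I_n,Y(0),Adt)\), since the Stokes data contains the \(p=2m\), \(\varepsilon = I\) element, which is \(S\) by Proposition 3.3(3).

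Next, rewrite the structure in coordinate \(\tilde{t}\). Then \(\Phi = e^{\sqrt{-1}\phi}A\,d\tilde{t}\), and the frame must be reordered by the permutation \(P\) associated with \(\phi\). By Proposition 3.2 and Corollary 3.1, if \(\theta_p < \phi \le \theta_{p+1}\), the Stokes matrix in the new coordinate and frame is \(\tilde{S} = (\sigma_{l_p}\circ\cdots\circ\sigma_{l_1})(S)\). The Stokes data of the \(\tilde{t}\)-structure is obtained by applying all further rotations and sign changes to \(\tilde{S}\). Rotating once more by \(2\pi - \phi\) (plus a small shift) brings the coordinate back to \(t\). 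Combining Proposition 3.3(1) with the cyclic structure of the sequence \(l_1,\dots,l_{2m}\), the rays attached to the \(\tilde{t}\)-structure are the rotated rays \(e^{\sqrt{-1}\phi}R_j\). Consequently its sequence of reorderings is the cyclic shift \(l_{p+1},\dots,l_{2m},l_1,\dots,l_p\). Applying this shifted sequence up to the index \(2m-p\) gives
\[
(\sigma_{l_{2m}}\circ\cdots\circ\sigma_{l_{p+1}})(\tilde{S}) = (\sigma_{l_{2m}}\circ\cdots\circ\sigma_{l_1})(S) = S,
\]
again by Proposition 3.3(3). Hence \(S\) also lies in \(\mathscr{S}(\mathbb{C}^* \times \mathbb{C}^n,I_n,Y(0),Ad\tilde{t})\), and the intersection is nonempty.

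The main obstacle is justifying that the \(\tilde{t}\)-structure's reordering sequence is exactly the cyclic shift of \((l_j)\). This requires checking that the induction in Proposition 3.2 depends only on the ray configuration, which is rotation-equivariant, and not on the base point. I would verify this by noting that the separating rays for \(e^{\sqrt{-1}\phi}A\) with the reordered frame are \(\tilde{R}^{(p)}_j = e^{\sqrt{-1}\phi}R_{j+p}\), as established inside the proof of Proposition 3.2, with indices taken modulo \(2m\). The case where \(\phi\) lies exactly on a ray \(\theta_j\) is handled by the convention \(\theta_p<\phi\le\theta_{p+1}\).
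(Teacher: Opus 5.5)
Your argument is correct at the paper's level of rigor, but it runs in the opposite direction from the paper's proof. That reversal is exactly where your ``main obstacle'' comes from.

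The paper's witness is not \(S\) but the rotated matrix \(\tilde S=(\sigma_{l_p}\circ\cdots\circ\sigma_{l_1})(S)\). By Proposition 3.2 and the corollary \(\tilde S_1^{(p)}=(\sigma_{l_p}\circ\cdots\circ\sigma_{l_1})(S_1^{(\infty)})\), this is the Stokes matrix of the structure in the coordinate \(\tilde t\). It therefore lies in the \(\tilde t\)-data as that structure's own Stokes matrix, by the full-cycle identity applied to it, whatever its reordering sequence is. It is also, by definition, the \(p\)-th element of the \(t\)-data. So nothing about the reordering sequence \(\tilde l_1,\dots,\tilde l_{2m}\) of the \(\tilde t\)-structure is needed.

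With \(S\) as witness you must instead know that \(\tilde l_k=l_{p+k}\) for \(k\le 2m-p\). Your justification is adequate: it uses \(\tilde R^{(p)}_j=e^{\sqrt{-1}\phi}R_{j+p}\) and the inductive step in the proof of Proposition 3.2, where \(l_{p+1}\) is read off from \(\tilde R^{(p)}_1\). You could even bypass this step. Apply Proposition 3.2 to the \(\tilde t\)-structure with angle \(2\pi-\phi\): this returns to the coordinate \(t\) and the frame \(\tau\), hence to the Stokes matrix \(S\), which is then an element of the \(\tilde t\)-data.

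Your route does buy a stronger conclusion. Establish the shift \(\tilde l_k=l_{p+k}\) for all \(k\), with indices mod \(2m\), and combine it with \((\sigma_{l_{2m}}\circ\cdots\circ\sigma_{l_1})(S)=S\). This shows that the two Stokes data sets coincide. That is the coordinate-independence the paper asserts informally just before the lemma, whereas the paper's argument only produces one common element.
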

	\begin{proof}
		It follows from Proposition 3.2 and Corollary 3.1.
	\end{proof}
	
	As an immediate consequence, we obtain:
	\begin{prop}
		The equivalence class \([(E,\eta,g,\Phi)]\) of a tt*-structure over \(\mathbb{C}^*\) satisfying (DB), (R) and (PD) is uniquely determined by the pair consisting of a diagonal matrix \(A\) and an upper unitriangular matrix \(S\).
	\end{prop}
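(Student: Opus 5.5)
The statement is the last proposition: the equivalence class $[(E,\eta,g,\Phi)]$ is determined by the pair $(A,S)$. I would argue directly from the construction preceding it together with the lemma just proved.

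\textbf{Step 1: the pair $(A,S)$ determines a tt*-structure once a coordinate is fixed.} Given $A=\mathrm{diag}(u_1,\dots,u_n)$ with distinct entries and an upper unitriangular $S$, fix a coordinate $t$ on $\mathbb{C}^*$. The Riemann--Hilbert problem (RH) with jumps $G_\pm$ built from $S$ and $A$ has at most one solution $Y$. This follows from Lemma~\ref{lem3.6}'s uniqueness argument: if $Y,Y'$ both solve (RH), then $Y'Y^{-1}$ has no jump and tends to $I$ at $\infty$, so by Liouville it equals $I$. Hence $G=Y(0)$ is determined, and so is $(\mathbb{C}^*\times\mathbb{C}^n,I_n,Y(0),A\,dt)$.

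\textbf{Step 2: an arbitrary representative is equivalent to this model.} Let $(E,\eta,g,\Phi)$ be a tt*-structure with data $(A,S)$, meaning that in some coordinate $t$ and some frame $\tau$ (Lemma~\ref{prop2.13}) we have $\Phi=\tau A\,dt$ and $S=S_1^{(\infty)}$. Proposition~\ref{prop3.2} shows that the associated $Y$ solves (RH) for $(A,S)$. By Step~1 its value $G=Y(0)$ agrees with the matrix $g(\tau_i,\tau_j)$. Therefore $(E,\eta,g,\Phi)$ is isomorphic, via the frame $\tau$, to the model structure in that coordinate. In particular the two have the same Stokes data, so they are equivalent.

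\textbf{Step 3: independence of the coordinate.} If another representative uses a coordinate $\tilde t$, Step~2 identifies it with the model in the coordinate $\tilde t$. The preceding lemma says the Stokes data of the models in the coordinates $t$ and $\tilde t$ intersect. By definition of $\sim$, a chain of length one then connects them. Transitivity gives a single equivalence class.

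\textbf{Step 4: independence of the frame sign.} The sign choices $\varepsilon$ from Lemma~\ref{prop2.13} only replace $S$ by $\sigma_\varepsilon(S)$. This stays inside the Stokes data, so it does not change the class.

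The main obstacle is Step~2. There one must justify carefully that the metric matrix $G$ of the given structure coincides with $Y(0)$ from the unique (RH) solution, including that the normalization $\lim_{\mu\to\infty}Y=I$ and the factor $G$ in the definition of $Y_\pm$ match. I would check this by comparing the asymptotics at $\mu=0$ in Proposition~\ref{prop3.2} with those used in the reconstruction proposition.
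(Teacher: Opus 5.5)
Your Steps 1 and 3 are the paper's argument. Uniqueness of the (RH) solution for fixed $A$, $S$ and coordinate makes the model $(\mathbb{C}^*\times\mathbb{C}^n,I_n,Y(0),A\,dt)$ well defined. The coordinate-change lemma together with the definition of $\sim$ then puts the models for different coordinates into one class.

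The gap is in Step 2, where you go beyond the paper, and the obstacle you flag there is a real one, not a normalization check. Uniqueness only tells you that the $Y$ of Proposition~\ref{prop3.2} is the model's $Y$; it does not give $Y(0)=(g(\tau_i,\tau_j))$. Since $Y_\pm=G\Psi_k^{(\infty)}e^{-\mu^{-1}xA-\mu x\overline{A}}$, you need $\Psi_k^{(\infty)}(\mu)e^{-\mu^{-1}xA}\to I$ as $\mu\to0$. The reality symmetry of (\ref{A}) shows that the solution normalized this way at $0$, in the same sector, is $G^{-1}\overline{\Psi_k^{(\infty)}(1/\overline{\mu})}$. So what is required is that this coincides with $\Psi_k^{(\infty)}$, i.e.\ that the connection matrix between the canonical solutions at $0$ and $\infty$ is trivial. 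Proposition~\ref{prop3.2} says nothing about $\Psi_k^{(\infty)}$ at $\mu=0$ beyond asserting that $Y(0)$ exists. The comparison of asymptotics you propose therefore has nothing to compare against, and would need a separate analytic input.

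The same issue hides in Step 1. Liouville needs $\det Y\equiv1$ and boundedness of $Y$ near $\mu=0$, because $G_\pm$ have essential singularities there: $(I+(e^{1/\mu}-1)E_{12})Y$ has the same jumps and the same limit at $\infty$. So applying uniqueness to the $Y$ of Proposition~\ref{prop3.2} already presupposes its regularity at $0$.

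You do not need the isomorphism, though, because $\sim$ only asks that Stokes data intersect. If $(E,\eta,g,\Phi)$ realizes $(A,S)$ in some coordinate and frame, then $S\in\mathscr{S}(E,\eta,g,\Phi)$ by definition. The model also has $S$ in its Stokes data. With $G=Y(0)$, the function $\Psi=G^{-1}Ye^{\mu^{-1}xA+\mu x\overline{A}}$ has the canonical asymptotics $G^{-1}(I+O(\mu^{-1}))e^{\mu x\overline{A}}$ at $\infty$. The jump conditions of (RH) then say exactly that $\Psi_+=\Psi_-S$ on $\Gamma_-$ and $\Psi_-=\Psi_+(S^{-1})^t$ on $\Gamma_+$. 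Hence every realization is equivalent to the model by a chain of length one. In fact any two realizations of $(A,S)$, in whatever coordinates, are directly equivalent. Replace Step 2 by this observation; Step 4 then becomes automatic. What remains is essentially the paper's proof, with the role of the definition of $\sim$ made explicit.
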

	\begin{proof}
		It is an immediate consequence of Lemma 3.2 and Proposition 3.4.
	\end{proof}
	
	Hence, the equivalence class of tt*-structures is characterized by the Stokes matrix and we succeed in avoiding the ambiguity of the Stokes matrices. However, an arbitrary upper unitriangular matrix does not necessarily yield a solution to (RH); in such a case, no corresponding tt*-structure exists. In the next section, we show that if the symmetrization of an upper unitriangular matrix coincides with one of the Cartan matrices of \(ADE\) type, then the matrix indeed determines a tt*-structure.

	\section{The tt*-structures constructed from the Cartan matrices}
	In this section, we construct tt*-structures from upper unitriangular matrices using the Vanishing Lemma. This method was originally introduced by Guest, Its, Lin (Section 5 of \cite{GIL20152}) in the study of the tt*-Toda equation. We extend their approach to a broader class of tt*-equations and prove that the Cartan matrices of type \(A_n, D_n, E_6, E_7, E_8\) give rise to tt*-structures.

	\subsection{\(\tilde{Br}_n\)-orbits of the Stokes matrices}
	In this section, we prove that any \(\tilde{Br}_n\)-orbit of the Stokes matrix gives a tt*-structure. We start with the following lemma.
	\begin{lem}\label{lem4.4}
		Let \(u_1,\dots,u_n \in \mathbb{C}\) and \(S = (s_{ij})\) an upper unitriangular matrix. For small \(\delta > 0\), define
		\begin{equation}
			\Gamma_- = \{\mu \in \mathbb{C}^*\ | \ {\rm arg}(\mu) = -\pi + \delta/2 \},\ \ \ \Gamma_+ = \{\mu \in \mathbb{C}^*\ | \ {\rm arg}(\mu) = \delta/2 \}. \nonumber
		\end{equation}
		and set
		\begin{align}
			&G_- = e^{\mu^{-1} x A + \mu x \overline{A}}Se^{-\mu^{-1} x A - \mu x \overline{A}}, \nonumber
			&G_+ = e^{\mu^{-1} x A + \mu x \overline{A}}(S^{-1})^te^{-\mu^{-1} x A - \mu x \overline{A}}. \nonumber
		\end{align}
		Then there exists a solution \(Y\) of (RH) with the jump matrices \(G_{\pm}\) on \(\Gamma\) if and only if there exists a solution \(\tilde{Y}\) of (RH) with the jump matrices \(\tilde{G}_{\pm} = T^{-1}G_{\pm}T\) on \(\Gamma\), where
		\begin{equation}
			T = \left(\begin{array}{ccc}
				e^{\sqrt{-1}\left(\mu^{-1}e^{\sqrt{-1}\frac{\delta}{2}} - \mu e^{-\sqrt{-1}\frac{\delta}{2}}\right)x \beta_1} & & \\
				& \ddots & \\
				& & e^{\sqrt{-1}\left(\mu^{-1}e^{\sqrt{-1}\frac{\delta}{2}} - \mu e^{-\sqrt{-1}\frac{\delta}{2}}\right)x \beta_n}
			\end{array}\right), \nonumber
		\end{equation}
		for some \(\beta_1,\dots,\beta_n \in \mathbb{R}\).
	\end{lem}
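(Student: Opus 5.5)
The argument is a conjugation of the RH problem by an explicit scalar-diagonal factor. First note that for any real \(\beta_1,\dots,\beta_n\) the matrix \(T(\mu)\) is holomorphic and invertible on \(\mathbb{C}^*\). Its exponent is
\[
\sqrt{-1}\,x\beta_j\left(\mu^{-1}e^{\sqrt{-1}\delta/2}-\mu e^{-\sqrt{-1}\delta/2}\right).
\]
On \(\Gamma_+\) we write \(\mu=re^{\sqrt{-1}\delta/2}\), and on \(\Gamma_-\) we write \(\mu=-re^{\sqrt{-1}\delta/2}\), with \(r>0\). In both cases the bracket becomes \(\pm(r^{-1}-r)\), which is real. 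So the exponent is purely imaginary on \(\Gamma\), and \(|T_{jj}|=1\) there.

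Given a solution \(Y\) with jumps \(G_\pm\), I would set \(\tilde{Y}_\pm=T^{-1}Y_\pm T\) on \(\hat{\Omega}_\pm\). Property (1) of (RH) is immediate, since \(T\) is holomorphic on \(\mathbb{C}^*\). The jump relation also transfers directly. For example, on \(\Gamma_-\),
\[
\tilde{Y}_+=T^{-1}Y_-G_-T=(T^{-1}Y_-T)(T^{-1}G_-T)=\tilde{Y}_-\tilde{G}_-,
\]
and likewise on \(\Gamma_+\). The converse direction is identical with \(T\) replaced by \(T^{-1}\), which is just the case \(\beta\mapsto-\beta\). So everything reduces to the normalization (3), \(\tilde{Y}\to I\) as \(\mu\to\infty\).

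The main obstacle is the normalization at \(\infty\), since \(T\) has essential singularities at \(0\) and \(\infty\). Entrywise we have
\[
(T^{-1}YT)_{jk}=Y_{jk}\,e^{\sqrt{-1}x(\beta_k-\beta_j)\left(\mu^{-1}e^{\sqrt{-1}\delta/2}-\mu e^{-\sqrt{-1}\delta/2}\right)}.
\]
Diagonal entries are untouched. For off-diagonal entries, the exponential factor is bounded on the ray \(\Gamma\) itself but grows like \(e^{c|\mu|}\) inside one of the half-planes \(\hat{\Omega}_\pm\). My plan is not to conjugate the function naively. Instead I would use that \(G_\pm-I\) decays exponentially along \(\Gamma\), a decay coming from \(e^{\mu x(\overline{u}_j-\overline{u}_k)}\) and the choice of \(\delta\). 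Then I would pass through the equivalent singular-integral formulation: \(Y\) solves (RH) iff \(Y_--I\) solves the Cauchy-operator equation \(\mu\)-a.e. on \(\Gamma\) in \(L^2(\Gamma)\), i.e. \(I-C_{G}\) has trivial kernel / is invertible (the Fredholm framework of \cite{FIKN2006}).

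Because \(T\) is unitary-diagonal on \(\Gamma\), conjugation by \(T\) acts isometrically on \(L^2(\Gamma)\). I would then check that the operator for \(\tilde{G}\) is \(T^{-1}(\cdot)T\)-conjugate to the one for \(G\), up to a compact perturbation that keeps the Fredholm index zero. If that works, unique solvability for \(G\) is equivalent to unique solvability for \(\tilde{G}\); the decay of \(\tilde{G}_\pm-I\) on \(\Gamma\) needed to set up that operator holds because \(|T|=1\) on \(\Gamma\).

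If the compact-perturbation claim proves delicate, my fallback is to absorb \(T\) into the exponents. The conjugated jump \(T^{-1}G_\pm T\) has the same form as \(G_\pm\) with \(A\) replaced by \(A+c\,\mathrm{diag}(\beta)\) and \(\overline{A}\) replaced by \(\overline{A}-\overline{c}\,\mathrm{diag}(\beta)\) for a suitable constant \(c\). The triangular structure of \(S\), together with the ordering (ii)--(iii) of the \(u_j\), then keeps \(\tilde{G}_\pm\to I\) exponentially on \(\Gamma\). The vanishing-lemma/Fredholm alternative then gives existence for one problem iff the other.
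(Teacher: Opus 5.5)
There is a genuine gap. Your reduction is sound as far as it goes. \(|T_{jj}|=1\) on \(\Gamma\), \(T^{-1}YT\) has the right jumps, and the whole difficulty is the behaviour at \(0\) and \(\infty\), which naive conjugation destroys. The problem is that in both of your repairs, the step meant to transfer solvability does not follow.

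In the Fredholm route, \(I-C_G\) and \(I-C_{\tilde G}\) are \emph{already} Fredholm of index zero; that is exactly what makes the Vanishing Lemma an equivalence. So your compact-perturbation claim adds nothing, even granting it (it is plausible, since \(G_\pm-I\) vanishes at both ends of \(\Gamma\) and damps the oscillation of \(T\)). Invertibility is not stable under compact perturbations. Knowing that \(U^{-1}(I-C_{\tilde G})U-(I-C_G)\) is compact, with \(Uf=T^{-1}fT\), says nothing about whether the two kernels are trivial together.

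In the fallback, the computation is right up to a sign. \(T^{-1}G_\pm T\) is exactly the jump for \(A'=A-\sqrt{-1}e^{\sqrt{-1}\delta/2}{\rm diag}(\beta_1,\dots,\beta_n)\), with \(\overline{A'}\) as the coefficient of \(\mu\), not \(\overline{A}-\overline{c}\,{\rm diag}(\beta)\). But the Fredholm alternative ties existence to uniqueness for a \emph{single} problem. The claim that the homogeneous problems for \(A\) and \(A'\) are trivial simultaneously is just the lemma restated.

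In fact no argument at a fixed \(x\) can succeed, and that is how both the statement and your proof read; your fallback shows why. Take \(n=2\), \(S=\begin{pmatrix}1&s\\0&1\end{pmatrix}\) with \(s\in\mathbb{R}\), \(|s|>2\). Write \(e^{-\sqrt{-1}\delta/2}(u_1-u_2)=a+\sqrt{-1}b\) with \(a>0\), so that \(G_\pm\to I\) along \(\Gamma\).
\begin{enumerate}
\item The jumps keep decaying on the sectors swept when \(\Gamma\) is rotated onto the line through \(0\) and \(u_1-u_2\). After the substitution \(\mu=\nu(u_1-u_2)/|u_1-u_2|\), the exponent becomes \(x|u_1-u_2|(\nu+\nu^{-1})\).
\item Hence solvability at \(x\) depends only on \(x\sqrt{a^2+b^2}\), while conjugation by \(T\) replaces \(b\) by \(b-(\beta_1-\beta_2)\).
\item For \(|s|>2\) this problem is solvable for large \(x|u_1-u_2|\) (e.g. by Corollary \ref{cor4.1}), but not for all values. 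Here \(SS^{-t}\) has negative eigenvalues different from \(-1\). The corresponding radial sinh-Gordon solutions are singular at isolated points \cite{MTW1977}, and at those points (RH) cannot be solvable.
\item Choose \(x\) at such a point, and choose \(\beta_1-\beta_2\) so that \(x\sqrt{a^2+(b-\beta_1+\beta_2)^2}\) avoids these points. This contradicts the lemma.
\end{enumerate}
A version quantified over all \(x>0\) would need a different argument; for \(n=2\) it follows from this rescaling.

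The paper's proof takes a different route from yours. It writes \(T=T_\infty T_0\), multiplies \(Y\) on the right by \(T_\infty\) and then by \(T_0\), and renormalizes by the constants \(g_1=\hat Y(0)\) and \(g_2=\hat{\hat Y}(\infty)\). It breaks at the same point you isolated: unless all \(\beta_j=0\), \(YT_\infty\) has no limit as \(\mu\to 0\), because \(T_\infty\) oscillates on \(\Gamma\) and grows in one half-plane. So \(g_1\) is undefined; a jump tending to \(I\) does not make the function converge.

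What does survive is that the sufficient criterion of Corollary \ref{cor4.1} is \(T\)-invariant. On \(\Gamma\) one has \(e^{\sqrt{-1}\delta}\overline{\mu}=\mu\) and \(T^{-1}=\overline{T}\). So the two Hermitian matrices tested in that corollary for \(\tilde G_\pm\) are unitary conjugates of those for \(G_\pm\).
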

	\begin{proof}
		Put
		\begin{align}
			&T_0 = \left(\begin{array}{ccc}
				e^{-\sqrt{-1}\mu e^{-\sqrt{-1}\frac{\delta}{2}}x \beta_1} & & \\
				& \ddots & \\
				& & e^{-\sqrt{-1}\mu e^{-\sqrt{-1}\frac{\delta}{2}} x \beta_n}
			\end{array}\right), \nonumber\\
			&T_{\infty} = \left(\begin{array}{ccc}
				e^{\sqrt{-1}\mu^{-1}e^{\sqrt{-1}\frac{\delta}{2}} x \beta_1} & & \\
				& \ddots & \\
				& & e^{-\sqrt{-1}\mu^{-1}e^{\sqrt{-1}\frac{\delta}{2}} x \beta_n}
			\end{array}\right), \nonumber
		\end{align}
		then \(T = T_{\infty}T_0\) on \(\Gamma\). Given a solution \(Y\) to (RH) with the jump matrix \(G_{\pm}\), we put \(\hat{Y} = YT_{\infty}\). Then, we have \(\lim_{\mu \to \infty}\hat{Y} = I\) and
		\begin{equation}
			\hat{Y}_+ = Y_+T_{\infty} = Y_-(G_{\pm})^{\mp 1}T_{\infty} = Y_-T_{\infty}T_{\infty}^{-1}(G_{\pm})^{\mp 1}T_{\infty} = \hat{Y}_-T_{\infty}^{-1}(G_{\pm})^{\mp 1}T_{\infty}. \nonumber
		\end{equation}
		Since \(T_{\infty}^{-1}G_{\pm}T_{\infty} \to I\) as \(\mu \to 0\) on \(\mu \in \Gamma\), we set a \(g_1 := \hat{Y}(0)\). We put \(\hat{\hat{Y}} = g_1^{-1}\hat{Y}T_0\), then we have \(\lim_{\mu \to 0}\hat{\hat{Y}} = I\) and
		\begin{align}
			\hat{\hat{Y}}_+ &= g_1^{-1}\hat{Y}_+T_0 = g_1^{-1}\hat{Y}_-T_{\infty}^{-1}(G_{\pm})^{\mp 1}T_{\infty}T_0 = g_1^{-1}\hat{Y}_-T_0T_0^{-1}T_{\infty}^{-1}(G_{\pm})^{\mp 1}T_{\infty}T_0 \nonumber\\
			&= \hat{\hat{Y}}_-T^{-1}(G_{\pm})^{\mp 1}T. \nonumber
		\end{align}
		Since \(T^{-1}G_{\pm}T \to I\) as \(\mu \to \infty\) on \(\mu \in \Gamma\), we set a \(g_2 := \hat{\hat{Y}}(\infty)\). We put \(\tilde{Y} = g_2^{-1}\hat{\hat{Y}}\), then we have \(\lim_{\mu \to \infty}\tilde{Y} = I\) and
		\begin{equation}
			\tilde{Y}_+ = g_2^{-1}\hat{\hat{Y}}_+ = g_2^{-1}\hat{\hat{Y}}_-T^{-1}(G_{\pm})^{\mp1}T = \tilde{Y}_-\tilde{G}_{\pm}^{\mp 1}. \nonumber
		\end{equation}
		Thus, \(\tilde{Y}\) is a solution (RH) with the jump matrix \(\tilde{G}_{\pm}\).
	\end{proof}
	We obtain the following theorem.
	\begin{Thm1*}\label{thm4.2}
		Let \(S \in {\rm SL}_n \mathbb{R}\) be upper unitriangular. Suppose that there exists \(\sigma \in \tilde{Br}_n\) such that \(\sigma(S)\) provides a solution to the Riemann-Hilbert problem (RH) for all pairwise distinct \(u_1,\dots,u_n \in \mathbb{C}\). Then, \(S\) gives a tt*-structure which is equivalent to the tt*-structure given by \(\sigma(S)\).
	\end{Thm1*}
	\begin{proof}
		It is enough to show the cases \(\sigma = \sigma_{\varepsilon}\) and \(\sigma_j\). For \(\sigma = \sigma_{\varepsilon}\ (\varepsilon \in \{\pm 1\}^n)\), the statement is immediate. Let
		\begin{align}
			-\frac{\pi}{2} &< \rho_{j+1},\cdots,\rho_n < \rho_j < \frac{\pi}{2} < \rho_1,\cdots,\rho_{j-1} < \rho_j + \pi < \frac{3}{2}\pi, \nonumber\\
			\alpha_1 &< \cdots < \alpha_{j-1} < 0 < \alpha_{j+1} < \cdots < \alpha_n \label{a}
		\end{align}
		and choose \(\delta > 0\) so that \(\rho_j + \delta/2 \le \pi/2\). Put
		\begin{equation}
			u_j \in \mathbb{C},\ \ \ u_l = u_j - \frac{\alpha_l}{\cos{\left(\rho_l\right)}}e^{\sqrt{-1}\left(\rho_l + \frac{\delta}{2}\right)},\ \ l = 1,\dots,j-1,j+1,\cdots,n. \nonumber
		\end{equation}
		then, we have
		\begin{equation}
			R_{jl} = \left\{\mu \in \mathbb{C}^*\ | \ {\rm arg}(\mu) = \rho_l + \frac{\delta}{2} - \frac{\pi}{2}\right\}, \nonumber
		\end{equation}
		and
		\begin{align}
			&{\rm Re}\left(e^{-\sqrt{-1}\frac{\delta}{2}}(u_l - u_j)\right) = -\alpha_l,\ \ \ \ \ {\rm Re}\left(e^{-\sqrt{-1}\frac{\delta}{2}}(u_a-u_b)\right) = -\alpha_a + \alpha_b. \nonumber
		\end{align}
		We choose \(\rho_2,\cdots,\rho_n\) so that \(R_1 = R_{j,j+1}\) and \(R_{ab} \neq R_{a'b'}\ ((a,b) \neq (a',b'))\). Let \(\theta_j = -{\rm arg}(R_j) \in (-\pi,0)\ (j = 1,\cdots,m)\) and \(\theta_1 < \theta \le \theta_2\), then put \(\tilde{u}_j = e^{\sqrt{-1}\theta}u_j\ (j = 1,\cdots,n)\). From the assumption, \(\sigma_j(S)\) gives a tt*-structure for \(\tilde{u_1},\cdots,\tilde{u}_n\). Here, we can choose arbitrary \(\alpha_1,\dots,\alpha_n\) satisfying (\ref{a}). Since \(R_1 = R_{j,j+1}\), the matrix \(S\) belongs to the Stokes data of the resulting tt*-structure. Hence \(S\) yields a solution to (RH) for all choice of \(\alpha_1,\cdots,\alpha_n\). Put
		\begin{equation}
			\beta_l = {\rm Im}\left(e^{-\sqrt{-1}\frac{\delta}{2}}(u_l - u_j)\right), \nonumber
		\end{equation}
		then we have
		\begin{align}
			&\exp{\left(\mu^{-1}x(u_a - u_b) + \mu x (\overline{u}_a - \overline{u}_b)\right)} \nonumber \\
			&= \text{\footnotesize \(\exp{\left(\left(\mu^{-1}e^{\sqrt{-1}\frac{\delta}{2}} + \mu e^{-\sqrt{-1}\frac{\delta}{2}}\right)x(\alpha_a - \alpha_b) + \sqrt{-1}\left(\mu^{-1}e^{\sqrt{-1}\frac{\delta}{2}} - \mu e^{-\sqrt{-1}\frac{\delta}{2}}\right)x(\beta_a - \beta_b)\right)}\)}. \nonumber
		\end{align}
		\normalsize
		
		By Lemma \ref{lem4.4}, \(S\) gives a solution to (RH) for any of \(\alpha_1,\cdots,\alpha_n\) satisfying (\ref{a}) and arbitrary \(\beta_1,\dots,\beta_n \in \mathbb{R}\), and hence for all \(u_1,\dots,u_n \in \mathbb{C}^*\). It follows that \(S\) gives a tt*-structure over \(\mathbb{C}^*\).
	\end{proof}
	From Proposition \ref{prop4.2}, an upper unitriangular matrix \(S\) gives a tt*-structure over \(\mathbb{C}^*\) if S lies in the \(\tilde{Br}_n\)-orbit of a Stokes matrix. In the following section, we show that \(\tilde{Br}_n\)-orbit of the Cartan matrices of type \(A_n,D_n,E_6,E_7,E_8\) gives a tt*-structure.

	\subsection{The Cartan matrices of \(A_n,D_n,E_6,E_7\) and \(E_8\)}
	In this section, we show that the Cartan matrices of \(A_n,D_n,E_6,E_7,E_8\) give rise to tt*-structures using the method of Guest-Its-Lin \cite{GIL20152}. Let
	\footnotesize
	\begin{align}
		&S_{A_n} = \left(\begin{array}{ccccc}
			1 & -1 & & & \\
			& 1 & \ddots & &\\
			& & \ddots & \ddots & \\
			& & & 1 & -1 \\
			& & & & 1
		\end{array}\right), n \ge 1,\ S_{D_n} = \left(\begin{array}{ccccc}
			1 & -1 & & & \\
			& \ddots & \ddots & & \\
			& & 1 & -1 & -1 \\
			& & & 1 & 0 \\
			& & & & 1
		\end{array}\right), n \ge 4, \nonumber\\
		&S_{E_6} = \left(\begin{array}{cccccc}
			1 & -1 & & & & \\
			& 1 & -1 & & & \\
			&  & 1 & -1 & 0 & -1 \\
			&  & & 1 & -1 & 0 \\
			&  & & & 1 & 0 \\
			&  & & &  & 1
		\end{array}\right), S_{E_7} = \left(\begin{array}{ccccccc}
			1 & -1 & & & & & \\
			& 1 & -1 & & & & \\
			&  & 1 & -1 & 0 & 0 & 0 \\
			&  & & 1 & -1 & 0 & -1 \\
			&  & & & 1 & -1 & 0 \\
			& & & & & 1 & 0 \\
			&  & & & & & 1
		\end{array}\right), \nonumber\\
		&S_{E_8} = \left(\begin{array}{cccccccc}
			1 & -1 & & & & & & \\
			& 1 & -1 & & & & & \\
			&  & 1 & -1 & 0 & 0 & 0 & 0 \\
			&  & & 1 & -1 & 0 & 0 & 0 \\
			&  & & & 1 & -1 & 0 & -1 \\
			& & & & & 1 & -1 & 0 \\
			& & & & & & 1 & 0 \\
			&  & & & & & & 1
		\end{array}\right), \nonumber
	\end{align}
	\normalsize
	then \(S_{A_n}+S_{A_n}^t, S_{D_n}+S_{D_n}^t, S_{E_6}+S_{E_6}^t, S_{E_7}+S_{E_7}^t, S_{E_8}+S_{E_8}^t\) are the Cartan matrices of the simple Lie algebras of type \(A_n, D_n, E_6, E_7, E_8\) respectively.\vskip\baselineskip
	
	We construct the desired tt*-structures by solving the corresponding Riemann–Hilbert problem (RH) for these matrices. We use the Vanishing Lemma (Corollary 3.2 of \cite{FIKN2006}):
	\begin{lem} [The Vanishing Lemma]\label{lem5.1}
		The Riemann Hilbert problem determined by a pair \((\Gamma, G_{\pm}) \) has a solution if and only if the corresponding homogeneous Riemann-Hilbert problem (in which the condition \(Y \to Id \) is replaced by \(Y \to 0 \)) has only the trivial solution. 
	\end{lem}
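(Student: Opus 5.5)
The plan is to derive the statement from the standard Fredholm theory of singular integral operators on the contour \(\Gamma = \Gamma_- \cup \Gamma_+\), which is a straight line through the origin. The key feature is that the jump matrices \(G_\pm\) tend to \(I\) exponentially both as \(\mu \to 0\) and as \(\mu \to \infty\) along \(\Gamma\) (property (4) of Proposition \ref{prop3.2}). So after declaring \(G=I\) at \(\mu=0\), the jump is a smooth matrix function on \(\Gamma\) with \(G - I\) in every \(L^p(\Gamma)\) and no genuine singularity at the origin. I will use this throughout.

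\emph{Easy direction.} Suppose a solution \(Y\) of (RH) exists. First I note that \(\det Y \equiv 1\): by \(\det G_\pm = 1\), the function \(\det Y\) has no jump, is bounded, and tends to \(1\) at infinity, so Liouville gives \(\det Y \equiv 1\). Now let \(Z\) solve the homogeneous problem. Then \(ZY^{-1}\) has no jump across \(\Gamma\), since \(Z_+Y_+^{-1} = Z_-G G^{-1}Y_-^{-1}\). It is therefore entire, and it tends to \(0\) at infinity, so \(ZY^{-1} \equiv 0\) and hence \(Z \equiv 0\). The same argument shows that a solution of (RH), if it exists, is unique.

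\emph{Converse direction.} Suppose the homogeneous problem has only the trivial solution. I seek \(Y = I + \mathcal{C}(\rho)\), where
\[
\mathcal{C}(\rho)(\mu) = \frac{1}{2\pi\sqrt{-1}}\int_\Gamma \frac{\rho(s)}{s-\mu}\,ds
\]
is the Cauchy integral. The jump condition \(Y_+ = Y_- G\) becomes the singular integral equation
\[
(1 - \mathcal{C}_G)\rho = \mathcal{C}_-(G - I),
\]
where \(\mathcal{C}_G \rho = \mathcal{C}_-(\rho(G-I))\) and \(\rho = Y_-(G - I)\). The argument then runs in three steps.

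\textbf{Step 1 (Fredholm).} Show that \(1 - \mathcal{C}_G\) is Fredholm on \(L^2(\Gamma)\). This follows from the boundedness of \(\mathcal{C}_\pm\) on \(L^2\) of a line, together with the invertibility of \(G\) and its continuity up to \(0\) and \(\infty\), where it equals \(I\). A parametrix is obtained from the operator built with \(G^{-1}\), modulo compact operators.

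\textbf{Step 2 (index zero).} Deform \(S\) to \(I\) through \(S_t = I + t(S - I)\), \(t \in [0,1]\). Each \(S_t\) is still upper unitriangular, so the jump matrices \(G_\pm^{(t)}\) keep all the structural properties listed above. The family \(1 - \mathcal{C}_{G^{(t)}}\) is then a norm-continuous path of Fredholm operators. At \(t = 0\) the operator is the identity, so the index is \(0\) for all \(t\), including \(t=1\).

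\textbf{Step 3 (kernel equals homogeneous solutions).} If \((1 - \mathcal{C}_G)\rho = 0\), then \(Z = \mathcal{C}(\rho)\) solves the homogeneous problem, with \(Z \to 0\) at infinity and \(Z_\pm \in L^2\). By assumption \(Z \equiv 0\), so \(\rho = Z_+ - Z_- = 0\). Hence the kernel is trivial. Since the index is zero, \(1 - \mathcal{C}_G\) is surjective, and the equation above has a solution \(\rho\). The resulting \(Y\) satisfies the jump relation, and \(Y \to I\) at infinity.

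The main obstacle is the functional-analytic bookkeeping in Steps 1 and 3. One must make precise the class in which the homogeneous problem is posed (\(L^2\) boundary values), and check that an \(L^2\) solution of the integral equation upgrades to a solution continuous up to \(\Gamma\), as (RH) requires. For this I will use the smoothness of \(G\) and its exponential approach to \(I\) at \(0\) and \(\infty\), which give Hölder regularity of \(\rho\) by the standard bootstrap (Section 3 of \cite{FIKN2006}).
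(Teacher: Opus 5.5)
The paper does not prove this lemma. It quotes it from Corollary 3.2 of \cite{FIKN2006}, where it is obtained for general jump data with \(\det G\equiv 1\) from the Fredholm property, with index zero, of the associated singular integral operator. Your proposal is a correct reconstruction of that standard argument, specialized to the jumps of Section 2.5. The one genuinely different ingredient is Step 2: instead of invoking general index theory, you deform within the paper's own class of jumps.

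The deformation is even simpler than you make it. Orient \(\Gamma\) so that \(\hat{\Omega}_+\) lies on the left. Then the jump in the form \(Y_+=Y_-J\) is \(J=e^{\mu^{-1}xA+\mu x\overline{A}}\,S\,e^{-\mu^{-1}xA-\mu x\overline{A}}\) on \(\Gamma_-\), and \(J=G_+^{-1}=e^{\mu^{-1}xA+\mu x\overline{A}}\,S^{t}\,e^{-\mu^{-1}xA-\mu x\overline{A}}\) on \(\Gamma_+\). So \(S_t=I+t(S-I)\) gives \(J^{(t)}-I=t(J-I)\), and your path is literally \(1-t\,\mathcal{C}_G\).

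What each route buys:
\begin{itemize}
\item Your route is self-contained and avoids partial-index theory, but it only covers jumps that can be joined to \(I\) inside the admissible class.
\item The cited corollary handles an arbitrary pair \((\Gamma,G_\pm)\), which is the generality in which the lemma is stated.
\end{itemize}
Your restricted version is, however, all that Corollary \ref{cor4.1} actually uses.

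There are two points to tidy.

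\textbf{The integral equation.} Your displayed equation mixes two formulations. With \(\mathcal{C}_G\rho=\mathcal{C}_-(\rho(G-I))\) and right-hand side \(\mathcal{C}_-(G-I)\), the unknown is \(Y_--I\), and the Cauchy density is then \(Y_-(G-I)\). If you insist on \(\rho=Y_-(G-I)\) as the unknown, the equation is \(\rho-(\mathcal{C}_-\rho)(G-I)=G-I\). Your Step 3 (\(Z=\mathcal{C}(\rho)\), \(\rho=Z_+-Z_-\)) matches this second version. Either works, since \(1-\mathcal{C}_-R_w\) and \(1-R_w\mathcal{C}_-\) (with \(R_w\) right multiplication by \(w=G-I\)) have isomorphic kernels and equal indices, but you should pick one.

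\textbf{The regularity step you flag.} You can close it with your own parametrix. With \(w'=G^{-1}-I\) and \(w+w'+ww'=0\), one gets
\(
(1-\mathcal{C}_{G^{-1}})(1-\mathcal{C}_G)=1-\mathcal{C}_-[\mathcal{C}_-,R_{w'}]R_w .
\)
The commutator has kernel \((w'(s)-w'(z))/(s-z)\), which is smooth because \(G\) is \(C^\infty\) along \(\Gamma\) and flat at \(\mu=0\). Hence:
\begin{itemize}
\item every \(L^2\) kernel element \(\nu=\mathcal{C}_-\bigl([\mathcal{C}_-,R_{w'}](\nu w)\bigr)\) is smooth and bounded;
\item the associated \(Z\) is H\"older up to \(\Gamma\), including the origin, and tends to \(0\) at infinity;
\item so the hypothesis of the lemma, posed in the continuous class of (RH), applies to \(Z\).
\end{itemize}
Alternatively, the Cauchy-theorem argument of Lemma \ref{lem5.2} works equally well for \(L^2\) boundary values, so you could simply state and use the lemma in its \(L^2\) form.
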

	To use the Vanishing Lemma, we show the following lemma in the same way as Section 5 of \cite{GIL20152}:
	\begin{lem}\label{lem5.2}
		Let \(Y_{\pm } \) be a solution to the homogeneous Riemann-Hilbert problem for \(G_{\pm}\) on \(\Gamma \), then
		\begin{enumerate}
			\item [(a)] \(\int_{\Gamma} Y_+(\mu) \overline{Y_-(e^{\sqrt{-1}\delta}\overline{\mu}) }^t d\lambda = 0  \),
			\item [(b)] \(\int_{\Gamma} Y_-(\mu) \overline{Y_+(e^{\sqrt{-1}\delta}\overline{\mu} ) }^t d\lambda = 0  \).
		\end{enumerate}  
	\end{lem}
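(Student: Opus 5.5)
The plan is to follow Section 5 of \cite{GIL20152}: each identity is Cauchy's theorem applied to a matrix function that is holomorphic in one of the two sectors \(\hat{\Omega}_\pm\). The key observation is the anti-holomorphic reflection \(r(\mu) = e^{\sqrt{-1}\delta}\overline{\mu}\), i.e. reflection across the line \(\{{\rm arg}(\mu) = \delta/2\} \cup \{{\rm arg}(\mu) = \pi + \delta/2\}\). This line is exactly \(\Gamma_+ \cup (-\Gamma_+)\), and \(\Gamma_- = -\Gamma_+\). Hence \(r\) fixes \(\Gamma_+\) and \(\Gamma_-\) pointwise and exchanges \(\hat{\Omega}_+\) and \(\hat{\Omega}_-\).

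For (a), I will define
\begin{equation}
	H(\mu) = Y_+(\mu)\,\overline{Y_-(e^{\sqrt{-1}\delta}\overline{\mu})}^t,\ \ \ \mu \in \hat{\Omega}_+. \nonumber
\end{equation}
For \(\mu \in \hat{\Omega}_+\) we have \(r(\mu) \in \hat{\Omega}_-\). Since \(\mu \mapsto \overline{Y_-(r(\mu))}\) is the composition of two anti-holomorphic operations with a holomorphic map, it is holomorphic. So \(H\) is holomorphic in \(\hat{\Omega}_+\) and continuous up to its boundary \(\Gamma_+ \cup \Gamma_-\). On the boundary \(r(\mu) = \mu\), so the boundary values of \(H\) are precisely the integrand \(Y_+(\mu)\overline{Y_-(\mu)}^t\) appearing in (a), read as the limit from the appropriate side. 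For (b) I will use \(Y_-(\mu)\overline{Y_+(r(\mu))}^t\) on \(\hat{\Omega}_-\) in the same way.

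Next comes the contour argument. Truncate \(\hat{\Omega}_+\) to \(\{\varepsilon < |\mu| < R\}\) and apply Cauchy's theorem to \(H\); the boundary consists of the two rays, oriented as in \(\Gamma\), together with two circular arcs. For the homogeneous problem, \(Y_\pm \to 0\) at infinity. Since the jumps are \(I\) plus exponentially small terms near \(\infty\), the standard argument (e.g. via the Cauchy-integral representation of \(Y\)) upgrades this to \(Y_\pm = O(\mu^{-1})\). Hence \(H = O(\mu^{-2})\), and the large arc contributes \(O(R^{-1}) \to 0\). Near \(\mu = 0\) the jumps also tend to \(I\) exponentially along \(\Gamma\), so \(Y_\pm\) extend continuously to \(0\) and are bounded there. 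Hence the small arc contributes \(O(\varepsilon) \to 0\). Letting \(\varepsilon \to 0\) and \(R \to \infty\) gives \(\int_\Gamma H\, d\mu = 0\), which is (a) (here \(d\lambda\) denotes the line element \(d\mu\)). Statement (b) follows identically on \(\hat{\Omega}_-\).

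The main obstacle is the analytic bookkeeping rather than any algebra. First, one must confirm the decay \(Y_\pm = O(\mu^{-1})\) at \(\infty\) and boundedness at \(0\) for homogeneous solutions, rather than mere convergence to \(0\). I would get this from the integral representation \(Y(\mu) = \frac{1}{2\pi\sqrt{-1}}\int_\Gamma \frac{Y_-(s)(G(s)-I)}{s-\mu}\,ds\), using that \(G_\pm - I\) decays exponentially at both ends of \(\Gamma\). Second, one must track orientation carefully so that both rays \(\Gamma_+\) and \(\Gamma_-\) appear with the orientation used in the definition of \(\Gamma\), including which one-sided boundary value of \(Y\) enters each integral.
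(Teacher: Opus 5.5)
Your proposal is correct and follows the paper's proof. Both apply Cauchy's theorem to \(Y_+(\mu)\overline{Y_-(e^{\sqrt{-1}\delta}\overline{\mu})}^t\), which is holomorphic in \(\hat{\Omega}_+\) because \(\mu \mapsto e^{\sqrt{-1}\delta}\overline{\mu}\) fixes \(\Gamma\) pointwise and exchanges \(\hat{\Omega}_{\pm}\), and argue symmetrically on \(\hat{\Omega}_-\) for (b).

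Your arc estimates are more accurate than the paper's: the paper asserts exponential decay of \(Y_{\pm}\), whereas the true and sufficient statement is \(Y_{\pm} = O(\mu^{-1})\). One correction: boundedness of \(Y_{\pm}\) at \(\mu = 0\) must come from the solution class (continuity on the closure including \(0\)), not from the decay of the jumps. Otherwise \(\mu^{-1}Y\), for a genuine solution \(Y\), would have the same jumps and also tend to \(0\) at \(\infty\).
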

	\begin{proof}
		Since \(G_{\pm} \to Id \) exponentially as \(\mu \to \infty \), the same exponential decay holds for \(Y_+\) and \(Y_-\). \(Y_+(\mu) \), \(\overline{Y_-(e^{\sqrt{-1}\delta}\overline{\mu})}^t \) are holomorphic in 
		\(\Omega_+\). Thus, by using Cauchy's Theorem, we obtain (a). Similarly, we prove (b).
	\end{proof}
	From the Vanishing Lemma and Lemma \ref{lem5.2}, we obtain the following corollary.
	\begin{cor}\label{cor4.1}
		If \(G_-(\mu) + \overline{G_-(e^{\sqrt{-1}\delta}\overline{\mu})}^t\) and \(G_+(\mu)^{-1} + \overline{(G_+(e^{\sqrt{-1}\delta}\overline{\mu})^{-1})}^t\) are positive-definite on \(\mu \in \Gamma_-\) and \(\Gamma_+\) respectively for all \(x>0\), then the Riemann-Hilbert problem (RH) admits a solution.
	\end{cor}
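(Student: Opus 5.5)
The statement to prove is Corollary \ref{cor4.1}. I will argue by the Vanishing Lemma (Lemma \ref{lem5.1}): it suffices to show that any solution \(Y=\{Y_-,Y_+\}\) of the homogeneous problem, with the same jumps \(G_\pm\) on \(\Gamma_\pm\) and \(Y\to 0\) as \(\mu\to\infty\), vanishes identically. The input is Lemma \ref{lem5.2}, which gives two vanishing integrals. I will rewrite each integral as a sum of two contour integrals, one over \(\Gamma_-\) and one over \(\Gamma_+\), each of which is a Hermitian quadratic form in the boundary values of \(Y\).

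\textbf{Step 1: the reflection.} The map \(\mu\mapsto e^{\sqrt{-1}\delta}\overline{\mu}\) is the reflection across the line \({\rm arg}\,\mu=\delta/2\). It therefore fixes \(\Gamma_+\) pointwise and fixes \(\Gamma_-\) pointwise, since \(\Gamma_-\) lies on the same line through the origin. It also interchanges the two half-planes \(\hat{\Omega}_+\) and \(\hat{\Omega}_-\). Consequently, on \(\Gamma\) the function \(\overline{Y_-(e^{\sqrt{-1}\delta}\overline{\mu})}^t\) is just \(\overline{Y_-(\mu)}^t\) on the contour, and the factor \(d\lambda\) along each ray is real up to a fixed unimodular constant.

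\textbf{Step 2: substitute the jumps.} On \(\Gamma_-\) I use \(Y_+=Y_-G_-\), and on \(\Gamma_+\) I use \(Y_+=Y_-G_+^{-1}\). Adding (a) and the conjugate transpose of (b), and writing everything in terms of \(Y_-\) on the contour, I expect to obtain
\[
\int_{\Gamma_-} Y_-(\mu)\Bigl(G_-(\mu)+\overline{G_-(e^{\sqrt{-1}\delta}\overline{\mu})}^t\Bigr)\overline{Y_-(\mu)}^t\,|d\mu| \;+\; \int_{\Gamma_+} Y_-(\mu)\Bigl(G_+(\mu)^{-1}+\overline{(G_+(e^{\sqrt{-1}\delta}\overline{\mu})^{-1})}^t\Bigr)\overline{Y_-(\mu)}^t\,|d\mu| \;=\;0 .
\]
Convergence at \(0\) and \(\infty\) follows from the exponential decay noted in the proof of Lemma \ref{lem5.2}.

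\textbf{Step 3: conclude.} By hypothesis both integrands are positive semidefinite, and strictly positive wherever \(Y_-\neq 0\). Hence \(Y_-\equiv 0\) on \(\Gamma\), and then also \(Y_+=Y_-G_\pm^{\mp1}\equiv 0\) on \(\Gamma\). Since \(Y_\pm\) are holomorphic in \(\hat{\Omega}_\pm\) with zero boundary values on the full boundary rays and decay at infinity, the identity theorem (or the Cauchy integral representation) gives \(Y_\pm\equiv 0\). The Vanishing Lemma therefore yields a solution of (RH) for every \(x>0\).

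\textbf{Main obstacle.} The delicate part is the bookkeeping in Step 2. One must track the orientations of \(\Gamma_\pm\) and the unimodular constants \(e^{\sqrt{-1}\delta/2}\), \(e^{\sqrt{-1}(\delta/2-\pi)}\) in \(d\lambda\), so that the two contour terms appear with the same sign and the symmetrized jumps appear exactly as \(G_-+\overline{G_-(\cdot)}^t\) and \(G_+^{-1}+\overline{(G_+^{-1})(\cdot)}^t\). If the combination of (a) and (b) produces a relative sign or a phase, I would instead take suitable real linear combinations of (a), (b) and their adjoints, or take the trace, to isolate a real nonnegative quantity, as in Section 5 of \cite{GIL20152}.
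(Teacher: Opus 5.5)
Your proposal is correct and follows the paper's route: a solution of the homogeneous problem, the integral identities of Lemma \ref{lem5.2}, positivity of the symmetrized jumps forcing \(Y\equiv 0\), then the Vanishing Lemma. It also supplies details the paper's two-line proof omits, notably that \(\mu\mapsto e^{\sqrt{-1}\delta}\overline{\mu}\) fixes \(\Gamma\) pointwise. One small slip: on \(\Gamma\) the integrand of (b) is pointwise the conjugate transpose of that of (a). The symmetrized kernels \(G_-+\overline{G_-}^t\) and \(G_+^{-1}+\overline{G_+^{-1}}^t\) therefore come from adding (a) and (b) themselves, after writing \(\mu=se^{\sqrt{-1}\delta/2}\) so both become \(\int_{-\infty}^{\infty}(\cdots)\,ds=0\). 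Adding (a) and the conjugate transpose of (b) would only reproduce (a). Your stated fallback of taking suitable combinations of (a), (b) and their adjoints already covers this.
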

	\begin{proof}
		Let \(Y = \left\{Y_{\pm}\right\}\) be a solution to the homogeneous Riemann-Hilbert problem for \(G_{\pm}\) on \(\Gamma \). Since \(G_-(\mu) + \overline{G_-(e^{\sqrt{-1}\delta}\overline{\mu})}^t\) and \(G_+(\mu)^{-1} + \overline{(G_+(e^{\sqrt{-1}\delta}\overline{\mu})^{-1})}^t\) are positive definite, Lemma \ref{lem5.2} implies that \(Y = 0 \). By using the Vanishing Lemma, we obtain the solution to the Riemann Hilbert problem for \(G \) on \(\Gamma \).
	\end{proof}\vskip\baselineskip
	
	From Corollary \ref{cor4.1}, it is enough to prove that \(G_{\pm}(\mu) + \overline{G_{\pm}(e^{\sqrt{-1}\delta} \overline{\mu})}^t\) is positive-definite for \(S_{A_n}, S_{D_n}, S_{E_6}, S_{E_7}, S_{E_8}\).
			
			\begin{lem}\label{lem5.3}
				For an upper unitriangular matrix \(S\) and a diagonal matrix \(A\), we put
				\begin{equation}
					G_-(S) = e^{\mu^{-1} x A + \mu x \overline{A}} S e^{-\mu^{-1} x A - \mu x \overline{A} }, \ \ \ G_+(S) = e^{\mu^{-1} x A + \mu x \overline{A}} (S^{-1})^t e^{-\mu^{-1} x A - \mu x \overline{A} }. \nonumber
				\end{equation}
				If \(S=S_{A_n}, S_{D_n}, S_{E_6}, S_{E_7}\) or \(S_{E_8}\), then \(G_{\pm}(S)\) is positive-definite on \(\Gamma_{\pm}\) for all diagonal matrices \(A\) and all \(x>0\).
			\end{lem}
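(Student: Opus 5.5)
The plan is to interpret positive-definiteness in the sense needed for Corollary \ref{cor4.1}. That is, I will show that the Hermitian matrices
\[
G_-(\mu)+\overline{G_-(e^{\sqrt{-1}\delta}\overline{\mu})}^t \ \ (\mu\in\Gamma_-), \qquad G_+(\mu)^{-1}+\overline{(G_+(e^{\sqrt{-1}\delta}\overline{\mu})^{-1})}^t \ \ (\mu\in\Gamma_+)
\]
are positive-definite. The first observation is that $e^{\sqrt{-1}\delta}\overline{\mu}=\mu$ on both rays $\Gamma_\pm$, since $\arg\mu=-\pi+\delta/2$ or $\delta/2$. Hence the conditions reduce to positivity of $M_-:=G_-+G_-^{*}$ on $\Gamma_-$ and of $M_+:=G_+^{-1}+(G_+^{-1})^{*}$ on $\Gamma_+$, where $G_+^{-1}=D S^t D^{-1}$.

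Here $D=\mathrm{diag}(e^{w_1},\dots,e^{w_n})$ with $w_a=x(\mu^{-1}u_a+\mu\overline{u}_a)$. Writing $\mu=re^{\sqrt{-1}\theta}$, one gets $\mathrm{Re}\,w_a=x(r+r^{-1})\mathrm{Re}(e^{-\sqrt{-1}\theta}u_a)$.

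The key computation is entrywise. Since $S$ is real unitriangular, $(M_-)_{aa}=2$. For $a<b$,
\[
(M_-)_{ab}=s_{ab}e^{w_a-w_b}=\overline{(M_-)_{ba}},
\]
and all other entries vanish. By the choice of $\delta$ (the condition $\sin(\arg(u_a-u_b)-\delta)<0$ for $a<b$, which fixes the ordering of the $u_j$), one has $\mathrm{Re}(w_a-w_b)<0$ on $\Gamma_-$ for $a<b$. Hence $|e^{w_a-w_b}|<1$. The analogous statement for $S^t$ on $\Gamma_+$ holds with the opposite sign of $\mathrm{Re}(w_a-w_b)$, because $\theta$ shifts by $\pi$. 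So in both cases
\[
M_\pm=2I+N,
\]
where $N$ is Hermitian, supported on the pairs $(a,b)$ with $s_{ab}\neq0$, and satisfies $|N_{ab}|<|s_{ab}|$.

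I will then use the Dynkin structure. For each of $S_{A_n},S_{D_n},S_{E_6},S_{E_7},S_{E_8}$, the nonzero off-diagonal entries are all $-1$, and they sit on the edges of the corresponding Dynkin diagram. Each such diagram is a tree; I will check this directly, e.g.\ for $E_6$ the edges are $1\!-\!2,2\!-\!3,3\!-\!4,4\!-\!5,3\!-\!6$. Because there are no cycles, one can choose a diagonal unitary $V$ with $V^{*}NV=-W$, where $W$ is real symmetric with $W_{ab}=|N_{ab}|\in[0,1)$ on the edges and $0$ elsewhere. The phases can be fixed inductively from a root of the tree. Thus $M_\pm$ is unitarily equivalent to $2I-W$.

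The last step is a Perron–Frobenius comparison. Since $0\le W\le \mathrm{Adj}$ entrywise, where $\mathrm{Adj}$ is the adjacency matrix of the Dynkin diagram, we get
\[
\rho(W)\le\rho(\mathrm{Adj}).
\]
Moreover $\rho(\mathrm{Adj})<2$, because $2I-\mathrm{Adj}=S+S^t$ is the Cartan matrix and is positive-definite. Hence every eigenvalue of $W$ is at most $\rho(W)<2$, so $2I-W$, and therefore $M_\pm$, is positive-definite. This holds for every $x>0$, every $\mu$ on the contour, and every admissible diagonal $A$.

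The main obstacle I expect is the sign bookkeeping. I need to verify that the ordering conventions (i)–(iii) together with the choice of $\delta$ really give $|e^{w_a-w_b}|\le 1$ for all $a<b$ on $\Gamma_-$, and the reversed inequality for $S^t$ on $\Gamma_+$. I also need to handle coincident real parts, where the modulus equals $1$; this is harmless, since the bound $|N_{ab}|\le|s_{ab}|$ suffices. Finally, the statement "for all diagonal $A$" must be read with the ordering of the $u_j$ dictated by $A$, as in Section 2.5.
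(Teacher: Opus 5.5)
Your proof is correct, and it takes a genuinely different route from the paper's. Like you, the paper really proves positivity of $G_-+\overline{G_-(e^{\sqrt{-1}\delta}\overline{\mu})}^t$ and $G_+^{-1}+\overline{(G_+(e^{\sqrt{-1}\delta}\overline{\mu})^{-1})}^t$, which are your $M_\pm$ because $e^{\sqrt{-1}\delta}\overline{\mu}=\mu$ on $\Gamma_\pm$. It argues type by type, however. For $A_n$ it uses the recurrence $\det\mathcal{A}_n=2\det\mathcal{A}_{n-1}-|a_1|^2\det\mathcal{A}_{n-2}$ with $|a_j^\pm|<1$ and Sylvester's criterion. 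It declares $D_n$ to be ``the same''. For $E_6,E_7,E_8$ it relies on a computer-assisted minimization of the determinants $f_6,f_7,f_8$ over real parameters $e_i\in(-1,1)$.

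Your argument (diagonal unitary gauge, Perron--Frobenius comparison, and positive-definiteness of $S+S^t=2I-\mathrm{Adj}$) has several advantages:
\begin{itemize}
\item it treats all five families uniformly;
\item it needs no numerical verification;
\item it actually covers $D_n$;
\item it makes explicit the passage from complex off-diagonal entries to their moduli, which the paper's real-parameter $f_k$ assume without comment.
\end{itemize}
The paper's computation, in exchange, gives explicit lower bounds for the determinants.

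You can even drop the tree/gauge step. Let $N$ be Hermitian with $|N_{ab}|\le \mathrm{Adj}_{ab}$, let $v\neq 0$, and let $|v|$ denote the vector of moduli. Then
\[
v^*Nv\ \ge\ -|v|^t\,\mathrm{Adj}\,|v|\ \ge\ -\rho(\mathrm{Adj})\|v\|^2\ >\ -2\|v\|^2 .
\]
So your argument works for any real unitriangular $S$ with nonpositive off-diagonal entries and $S+S^t$ positive definite.

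On the bookkeeping you flagged: what you need is $\mathrm{Re}\bigl(e^{-\sqrt{-1}\delta/2}(u_a-u_b)\bigr)>0$ for $a<b$. This is the standing requirement that $G_\pm\to I$ exponentially along $\Gamma_\pm$. It holds under (ii)--(iii) for $\delta$ small, and it is exactly what the paper uses when it writes $\mathrm{Re}[\mu^{-1}(u_1-u_2)]<0$ on $\Gamma_-$. Cite it directly rather than deriving it from the condition $\sin(\arg(u_a-u_b)-\delta)<0$. As written, that condition forces $\arg(u_a-u_b)<\delta$, and so it fails, e.g., when $\mathrm{Re}\,u_a=\mathrm{Re}\,u_b$.
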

			\begin{proof}
				Let \(A = {\rm diag}(u_1,\cdots,u_n)\).\\
				(i) \(S = S_{A_n}\): Put
				\begin{align}
					&a_j^-(\mu) = (G_-(S_{A_n},\mu))_{j,j+1} = -\exp{\left(\mu^{-1}x(u_j-u_{j+1}) + \mu x (\overline{u}_j - \overline{u}_{j+1})\right)}, \nonumber \\
					&a_j^+(\mu) = (G_+(S_{A_n},\mu)^{-1})_{j+1,j} = -\exp{\left(\mu^{-1}x(u_{j+1}-u_j) + \mu x (\overline{u}_{j+1} - \overline{u}_j)\right)}, \nonumber
				\end{align}
				and
				\begin{align}
					\mathcal{A}_n^-(\mu,x) &= G_-(\mu) + \overline{G_-(e^{\sqrt{-1} \delta}\overline{\mu})}^t \nonumber\\
					&= \left(\begin{array}{ccccc}
						2 & a^-_1 & & &  \\
						\overline{a^-_1} & 2 & a^-_2 &  & \text{\huge \(0\)} \\
						& \overline{a^-_2} & 2 & \ddots & \\
						& & \ddots & \ddots & a^-_{n-1} \\
						\text{\huge \(0\)} & & & \overline{a^-_{n-1}} & 2
					\end{array}\right),\ \ \ \ \ \mu \in \Gamma_-, \nonumber\\
					\mathcal{A}_n^+(\mu,x) &= G_+(\mu)^{-1} + \overline{(G_+(e^{\sqrt{-1} \delta}\overline{\mu})^{-1})}^t \nonumber\\
					&= \left(\begin{array}{ccccc}
						2 & a^+_1 & & &  \\
						\overline{a^+_1} & 2 & a^+_2 &  & \text{\huge \(0\)} \\
						& \overline{a^+_2} & 2 & \ddots & \\
						& & \ddots & \ddots & a^+_{n-1} \\
						\text{\huge \(0\)} & & & \overline{a^+_{n-1}} & 2
					\end{array}\right),\ \ \ \ \ \mu \in \Gamma_+. \nonumber
				\end{align}
				For \(n=1\), we have \({\rm det}\left(\mathcal{A}_1^{\pm}\right) = 2 > 0 \). For \(n=2\), we obtain
				\begin{align}
					&\mathcal{A}_2^- = \left(\begin{array}{cc}
						2 & -e^{\mu^{-1}x(u_1-u_2) + \mu x(\overline{u}_1-\overline{u}_2)} \\
						-e^{\overline{\mu^{-1}}x(\overline{u}_1-\overline{u}_2) + \overline{\mu} x(u_1-u_2)} & 2
					\end{array}\right),\ \ \ \mu \in \Gamma_-, \nonumber \\
					&\mathcal{A}_2^+ = \left(\begin{array}{cc}
						2 & -e^{-\mu^{-1}x(u_1-u_2) - \mu x(\overline{u}_1-\overline{u}_2)} \\
						-e^{-\overline{\mu^{-1}}x(\overline{u}_1-\overline{u}_2) - \overline{\mu} x(u_1-u_2)} & 2
					\end{array}\right),\ \ \ \mu \in \Gamma_+. \nonumber
				\end{align}
				
				Since \({\rm Re}[\mu^{-1}(u_1-u_2)] < 0\) on \(\Gamma_-\) and \({\rm Re}[\mu^{-1}(u_1-u_2)] > 0\) on \(\Gamma_+\), we have \({\rm det}\left(\mathcal{A}_2^{\pm}(a_1^{\pm})\right) - {\rm det}\left(\mathcal{A}_1^{\pm}\right) = 2- |a_1^{\pm}|^2 > 0 \) on \(\Gamma_{\pm}\). For \(n \ge 3\), the determinants satisfy
				\begin{align}
					&{\rm det}\left(\mathcal{A}_n^{\pm}(a_1^{\pm},\cdots,a_{n-1}^{\pm})\right) \nonumber \\
					&\hspace{1.8cm} = 2{\rm det}\left(\mathcal{A}_{n-1}^{\pm}(a_2^{\pm},\cdots,a_{n-1}^{\pm})\right) - |a_1^{\pm}|^2 {\rm det}\left(\mathcal{A}_{n-2}^{\pm}(a_3^{\pm},\cdots,a_{n-1}^{\pm})\right), \nonumber \\
					&{\rm det}\left(\mathcal{A}_n^{\pm}(a_1^{\pm},\cdots,a_{n-1}^{\pm})\right) - {\rm det}\left(\mathcal{A}_{n-1}^{\pm}(a_2^{\pm},\cdots,a_{n-1}^{\pm})\right) \nonumber \\
					&\hspace{1.8cm} = {\rm det}\left(\mathcal{A}_{n-1}^{\pm}(a_2^{\pm},\cdots,a_{n-1}^{\pm})\right) - |a_1^{\pm}|^2 {\rm det}\left(\mathcal{A}_{n-2}^{\pm}(a_3^{\pm},\cdots,a_{n-1}^{\pm})\right). \nonumber
				\end{align}
				
				If
				\begin{align}
					&{\rm det}\left(\mathcal{A}_{n-1}^{\pm}(a_1^{\pm},\cdots,a_{n-2}^{\pm})\right) - {\rm det}\left(\mathcal{A}_{n-2}^{\pm}(a_2^{\pm},\cdots,a_{n-2}^{\pm})\right) > 0, \nonumber \\ 
					&{\rm det}\left(\mathcal{A}_{n-2}^{\pm}(a_2^{\pm},\cdots,a_{n-2}^{\pm})\right) > 0, \nonumber
				\end{align}
				for all \(a_1^{\pm},\dots,a_{n-1}^{\pm} \), then it follows that
				\begin{align}
					&{\rm det}\left(\mathcal{A}_n^{\pm}(a_1^{\pm},\cdots,a_{n-1}^{\pm})\right) - {\rm det}\left(\mathcal{A}_{n-1}^{\pm}(a_2^{\pm},\cdots,a_{n-1}^{\pm})\right) \nonumber \\
					&\hspace{2.5cm} > {\rm det}\left(\mathcal{A}_{n-1}^{\pm}(a_2^{\pm},\cdots,a_{n-1}^{\pm})\right) - {\rm det}\left(\mathcal{A}_{n-2}^{\pm}(a_3^{\pm},\cdots,a_{n-1}^{\pm})\right) \nonumber \\
					&\hspace{2.5cm} > 0\ \ \ \ \ \ \ \ {\rm on}\ \Gamma_{\pm}, \nonumber
				\end{align}
				By induction, we conclude that
				\begin{equation}
					{\rm det}\left(\mathcal{A}_n^{\pm}(a_1^{\pm},\cdots,a_{n-1}^{\pm})\right) > \cdots > {\rm det}\left(\mathcal{A}_1^{\pm}\right) > 0, \nonumber
				\end{equation}
				on \(\Gamma_{\pm}\) and hence \(\mathcal{A}_n^{\pm} \) are positive definite on \(\Gamma_{\pm}\) for all \(x > 0\).\vskip\baselineskip
				
				\noindent (ii)\(S=S_{D_n}\): The proof is the same as in the case \(S=S_{A_n}\).\vskip\baselineskip
				
				\noindent (iii)\(S=S_{E_6}, S=S_{E_7}, S=S_{E_8}\): Let
				\begin{align}
					f_6(e_1,\dots,e_6) = {\rm det}\left(\begin{array}{cccccc}
						2 & e_1 & & & & \\
						e_1& 2 & e_3 & & & \\
						& e_3 & 2 & e_4 & 0 & e_5 \\
						&  & e_4 & 2 & e_6 & 0 \\
						&  & 0 & e_6 & 2 & 0 \\
						&  & e_5 & 0 & 0 & 2
					\end{array}\right), \nonumber\\
					f_7(e_1,\dots,e_7) = {\rm det}\left(\begin{array}{ccccccc}
						2 & e_1 & & & & \\
						e_1& 2 & e_3 & & & \\
						& e_3 & 2 & e_4 & 0 & 0 & 0 \\
						&  & e_4 & 2 & e_5 & 0 & e_6 \\
						&  & 0 & e_5 & 2 & e_7 & 0 \\
						& & 0 & 0 & e_7 & 2 & 0 \\
						&  & 0 & e_6 & 0 & 0 & 2
					\end{array}\right), \nonumber\\
					f_8(e_1,\dots,e_7) = {\rm det}\left(\begin{array}{cccccccc}
						2 & e_1 & & & & & & \\
						e_1& 2 & e_3 & & & & & \\
						& e_3 & 2 & e_4 & 0 & 0 & 0 & 0 \\
						&  & e_4 & 2 & e_5 & 0 & 0 & 0 \\
						&  & 0 & e_5 & 2 & e_6 & 0 & e_7 \\
						& & 0 & 0 & e_6 & 2 & e_8 & 0 \\
						& & 0 & 0 & 0 & e_8 & 2 & 0 \\
						&  & 0 & 0 & e_7 & 0 & 0 & 2 
					\end{array}\right). \nonumber
				\end{align}
				\normalsize
				One verifies (for instance by a computer-assisted computation) that 
				the minimum of \(f_6, f_7,\) and \(f_8\) on the domain 
				\(-1 < e_i < 1\) is equal to \(3, 2,\) and \(1\), respectively. Hence all principal minors are positive, and therefore \(G_-(\mu) + \overline{G_-(e^{\sqrt{-1}\delta}\overline{\mu})}^t\) and \(G_+(\mu)^{-1} + \overline{(G_+(e^{\sqrt{-1}\delta}\overline{\mu})^{-1})}^t\) are positive-definite on \(\mu \in \Gamma_-\) and \(\Gamma_+\) respectively for all \(x>0\),
			\end{proof}
			From Corollary \ref{cor4.1} and Lemma \ref{lem5.3}, we obtain the following theorem.
			
			\begin{Thm2*}
				Let \(S \in {\rm SL}_n \mathbb{R}\) be an upper unitriangular matrix. If there exists \(\sigma \in \tilde{Br}_n\) such that 
				\(\sigma(S) + \sigma(S)^t\) coincides with one of the Cartan matrices 
				of type \(A_n, D_n, E_6, E_7,\) or \(E_8\),
				then \(S\) gives a tt*-structure over \(\mathbb{C}^*\).
			\end{Thm2*}
			
			Hence, we obtain tt*-structures from the Cartan matrices of \(A_n,D_n,E_6, E_7\), and \(E_8\) and these provide examples of explicit solutions to tt*-equations. The resulting tt*-structures are equivalent to those arising 
			from the Cartan matrices of type \(A_n, D_n, E_6, E_7\), and \(E_8\). This result was also obtained by Sabbah \cite{S2005} and Hertling, Sevenheck \cite{HS2007} using the theory of \(ADE\)-singularity. In contrast, our approach constructs the tt*-structures directly by solving the associated Riemann--Hilbert problem via the Vanishing Lemma. Thus, we provide a new analytic proof of their global existence.

	\section*{Acknowledgement}
	The author would like to thank Professor Martin Guest for useful conversations. This paper is a part of the outcome of research performed under a Waseda University Grant for Special Research Projects (Project number: 2025C-104).
	
	\section*{Conflict of interests}
	The author has no conflicts to disclose.

\bibliography{mybibfile}
\bibliographystyle{plain}

	\em
	\noindent
	Department of Applied Mathematics\newline
	Faculty of Science and Engineering\newline
	Waseda University\newline
	3-4-1 Okubo, Shinjuku, Tokyo 169-8555\newline
	JAPAN

\end{document}